\title[Yang--Mills theory and the Atiyah--Segal theorem]{Yang--Mills theory over surfaces and the \\
Atiyah--Segal theorem}
\author{Daniel A. Ramras}
\address{Department of Mathematics\\
Vanderbilt University\\
\newline Nashville, TN 37240 \\U.S.A.}
\email{daniel.a.ramras@vanderbilt.edu}
\urladdr{http://www.math.vanderbilt.edu/~ramrasda}
\newtheorem{theorem}{Theorem}[section]
\newtheorem{lemma}[theorem]{Lemma}
\newtheorem{corollary}[theorem]{Corollary}
\newtheorem{proposition}[theorem]{Proposition}
\newtheorem{conjecture}[theorem]{Conjecture}
\newtheorem{definition}[theorem]{Definition}
\newtheorem{remark}[theorem]{Remark}
\DeclareMathOperator*{\colim}{colim}
\DeclareMathOperator*{\hocolim}{telescope}
\DeclareMathOperator*{\coprodmo}{\coprod}
\newcommand{\bbA}{\mathbb{A}}
\newcommand{\bbC}{\mathbb{C}}
\newcommand{\bbN}{\mathbb{N}}
\newcommand{\bbR}{\mathbb{R}}
\newcommand{\bbZ}{\mathbb{Z}}
\newcommand{\A}{\mathcal{A}}
\newcommand{\mcC}{\mathcal{C}}
\newcommand{\mcD}{\mathcal{D}}
\newcommand{\E}{\mathcal{E}}
\newcommand{\G}{\mathcal{G}}
\newcommand{\mH}{\mathcal{H}}
\newcommand{\mcH}{\mathcal{H}} 
\newcommand{\I}{\mathcal{I}}
\newcommand{\mcM}{\mathcal{M}}
\newcommand{\bG}{\mathcal{G}_0} 
\newcommand{\leqs}{\leqslant}
\newcommand{\geqs}{\geqslant}
\newcommand{\heq}{\simeq}
\newcommand{\maps}{\longrightarrow}
\newcommand{\injects}{\hookrightarrow}
\newcommand{\homeo}{\cong}
\newcommand{\surjects}{\twoheadrightarrow}
\newcommand{\isom}{\cong}
\newcommand{\cross}{\times}
\newcommand{\wt}[1]{\widetilde{#1}} 
\newcommand{\fc}{\mathcal{A}_{\mathrm{flat}}} 
\newcommand{\vect}[1]{\stackrel{\rightharpoonup}{\mathbf #1}}
\newcommand{\Rep}{\mathrm{Rep}}
\newcommand{\Hom}{\mathrm{Hom}}
\newcommand{\K}{K^{\mathrm{def}}}
\newcommand{\Rdef}{R^{\mathrm{def}}}
\newcommand{\Css}{\mathcal{C}_{ss}}
\newcommand{\Map}{\mathrm{Map}}
\newcommand{\flatc}{\mathcal{A}_{\mathrm{flat}}}
\def\co{\colon\thinspace}
\begin{document} 
 
 \begin{abstract}
In this paper we explain how Morse theory for the Yang--Mills functional can be used to prove an analogue, for surface groups, of the Atiyah--Segal theorem.  Classically, the Atiyah--Segal theorem relates the representation ring $R(\Gamma)$ of a compact Lie group $\Gamma$ to the complex $K$--theory of the classifying space $B\Gamma$.  For infinite discrete groups, it is necessary to take into account deformations of representations, and with this in mind we replace the representation ring by Carlsson's deformation $K$--theory spectrum $\K (\Gamma)$ (the homotopy-theoretical analogue of $R(\Gamma)$).  Our main theorem provides an isomorphism in homotopy 
$\K_*(\pi_1 \Sigma)\isom K^{-*}(\Sigma)$ for all compact, aspherical surfaces $\Sigma$ and all $*>0$.  Combining this result with work of Tyler Lawson, we obtain homotopy theoretical information about the stable moduli space of flat unitary connections over surfaces.
\end{abstract}

\maketitle{}


\section{Introduction}

In this paper we present evidence of two newly emerging phenomena involving the representation spaces $\Hom(\Gamma, U(n))$, for finitely generated discrete groups $\Gamma$ admitting compact classifying spaces.  The first phenomenon is akin to the classical Atiyah--Segal theorem, and relates Carlsson's deformation $K$--theory spectrum $\K(\Gamma)$ (an object built from the representation spaces) to the topological $K$--theory of the classifying space $B\Gamma$.  Second is that in dimensions higher than the cohomological dimension of $\Gamma$, the homotopy groups of the coarse moduli space $\mcM_n (\Gamma) = \Hom(\Gamma, U(n))/U(n)$
appear to vanish after an appropriate stabilization.  In some cases, including the surface groups considered here, this stabilization simply amounts to forming the colimit
$$\mcM (\Gamma) = \colim_{n\to\infty} \mcM_n (\Gamma) = \Hom(\Gamma, U)/U.$$
If $\Gamma$ is the fundamental group of a compact manifold $M$, this space may also be viewed as the stable moduli space of flat connections (or Hermitian bundles) over $M$.  

The link between these phenomena is provided by recent work of Tyler Lawson~\cite{Lawson-prod, Lawson-simul}, which shows that $\mcM (\Gamma)$ is closely related to the cofiber of the Bott map in deformation $K$--theory (see Section~\ref{coarse-moduli}).  Lawson applied his theorems to prove that for finitely generated free groups $F_k$, one has a weak equivalence (of $\mathbf{ku}$--algebras) $\K(F_k)\heq \Map(BF_k, \mathbf{ku})$, and that the stable moduli space $\mcM (F_k) = U^k/U$ is homotopy equivalent to the torus $(S^1)^k$.  Lawson's work also provides analogous statements for free abelian groups (note though that $\mcM (\bbZ^k)$ may be computed by hand, using the fact that commuting matrices are simultaneously diagonalizable).
In this paper we use Morse theory for the Yang--Mills functional to prove:

\medskip
\noindent {\bf Theorem \ref{main-thm}}\quad
{\sl Let $M$ be a compact, aspherical surface.  Then for $*>0$,
$$\K_*(\pi_1 (M))\isom K^{-*}(M).$$
}

Bott periodicity provides an isomorphism $K^{-*} (X) \isom K^*(X)$ for any space $X$, but the isomorphism constructed in this paper more naturally lands in 
$\pi_* \Map(M, \bbZ\cross BU) = K^{-*} (M)$.
This result yields a complete computation of $\K_*(\pi_1 M)$ (Corollary~\ref{k-groups}).  The isomorphism is natural for smooth maps between surfaces, and is in particular equivariant with respect to the mapping class group of the surface.
For non-orientable surfaces, there is actually an isomorphism on $\pi_0$ as well; this is just a re-interpretation of the results of Ho and Liu~\cite{Ho-Liu-ctd-comp-I, Ho-Liu-ctd-comp-II}.  When $M = S^1\cross S^1$, Theorem~\ref{main-thm} follows from T. Lawson's product formula $\K(\Gamma_1\cross \Gamma_2) \heq \K(\Gamma_1)\wedge_{\mathbf{ku}} \K(\Gamma_2)$~\cite{Lawson-prod} together with his calculation of $\K(\bbZ)$ as a $\mathbf{ku}$--module~\cite{Lawson-simul}.  
As we will explain, Theorem~\ref{main-thm} provides evidence that the homotopy groups of $\mcM(\pi_1 \Sigma)$ vanish above dimension 2 in the orientable case and above dimension 1 in the non-orientable case; it also allows us to compute the non-zero groups (Corollary~\ref{moduli} and Proposition~\ref{moduli2}).

Theorem~\ref{main-thm} is closely related to the classical theorem of Atiyah and Segal~\cite{Atiyah-char-coh, Atiyah-Segal}.  For a compact Lie group $G$, 
the topological $K$--theory of the infinite complex $B\Gamma$ is the limit of the $K$--theories of the skeleta 
$B\Gamma^{(n)}$, and hence has the structure of a complete ring (this follows most readily from a 
$\lim^1$ calculation).  The Atiyah--Segal theorem states that this ring is isomorphic to the completion of the representation ring $R(\Gamma)$ at its augmentation ideal (the virtual representations of virtual dimension zero).  

For groups $\Gamma$ with $B\Gamma$ compact, $K^*(B\Gamma)$ is no longer complete and one might hope to relate $K(B\Gamma)$ directly to $R(\Gamma)$.
Deformation $K$--theory may be viewed as the direct homotopical analogue of $R(\Gamma)$ (see Section~\ref{deformations}), so Theorem~\ref{main-thm} should be viewed as a direct homotopical analogue of the Atiyah--Segal theorem.
This suggests that deformation $K$--theory is the proper setting in which to study Atiyah--Segal phenomena for groups with compact classifying spaces, and
we expect that for many such groups $\Gamma$, the deformation $K$-groups of $\Gamma$ will agree with $K^{-*}(B\Gamma)$ for $*$ greater than the cohomological dimension of $\Gamma$ minus one.  The author's excision result for free products~\cite{Ramras-excision} and Lawson's product formula~\cite{Lawson-prod} indicate that this phenomenon should be stable under both free and direct products of discrete groups.  In particular, an Atiyah--Segal theorem for free products of surface groups follows immediately from Theorem~\ref{main-thm} together with the main result from~\cite{Ramras-excision} (or can be deduced from the proof of Theorem~\ref{main-thm}). 
The extent of this relationship is at yet unclear.  In all known examples, deformation $K$--theory eventually becomes 2--periodic, but there are examples in which these periodic groups fail to agree with $K$--theory of the classifying space (see Section~\ref{deformations}).  

Extensions and analogues of the Atiyah--Segal theorem (or more generally, the relation between representations and $K$--theory) have been studied extensively.  For infinite discrete groups $\Gamma$ satisfying appropriate finiteness conditions, Adem~\cite{Adem} and L{\"uck}~\cite{Luck} have studied the relationship between the $K$--theory of the classifying space $B\Gamma$ and the representation rings of the finite subgroups of $\Gamma$.  L{\"u}ck and Oliver~\cite{Luck-Oliver} considered the case of an infinite discrete group $\Gamma$ acting properly, i.e. with finite stabilizers, on a space $X$.  They showed that the $\Gamma$--equivariant $K$--theory of $X$, completed appropriately, agrees with the topological $K$--theory of the homotopy orbit space $E\Gamma \cross_\Gamma X$.  When $X$ is a point, properness forces $\Gamma$ to be finite, and the L{\"u}ck--Oliver theorem reduces to the Atiyah--Segal theorem.  For finite groups $\Gamma$, Chris Dwyer has recently established a twisted version of the classical Atiyah--Segal theorem, relating twisted $K$--theory of $B\Gamma$ to the completion of a twisted version of $R(\Gamma)$.  Deformation $K$--theory should also prove useful in studying Atiyah--Segal phenomena for groups with non-compact classifying spaces, and in this context, Carlsson's derived completion~\cite{Carlsson-derived} should play the role of the completed representation ring.  This approach should lead to a spectrum-level version of the Atiyah--Segal theorem itself, and may also yield spectrum-level versions of these various extensions.  

In a different direction, the Baum-Connes conjecture relates an analytical version of the representation ring of a group $\Gamma$ (namely the reduced $C^*$--algebra of $\Gamma$) to the equivariant $K$--homology of the classifying space for proper actions.
It is interesting to note that for a non-orientable surface $\Sigma$, deformation $K$--theory recovers the topological $K$--theory of $\Sigma$, whereas the $K$--theory of $C^*_{\rm red} (\Gamma)$ is the $K$--homology of $B\Gamma$.  A direct relationship between deformation $K$-theory and the $C^*$--algebras, at least for groups with no torsion in their (co)homology, would be extremely interesting.

The failure of Theorem~\ref{main-thm} in degree zero (and the failure in higher degrees for tori~\cite{Lawson-prod}) is an important feature of deformation $K$--theory, and reflects its close ties to the topology of representation spaces.  While $K$--theory is a stable homotopy invariant of $G$ (i.e. depends only on the stable homotopy type of $BG$), the representation spaces carry a great deal more information about the group $G$, and some of this information is captured by the low-dimensional deformation $K$--groups.  Hence deformation $K$--theory should be viewed as a subtler invariant of $G$, and its relationship to the topological $K$--theory of $BG$ should be viewed as an important computational tool.

As an application of Theorem~\ref{main-thm} (and a justification of the preceding paragraph), we obtain homotopy-theoretical information about the stable moduli space $\mcM(\pi_1 M)$ of flat unitary connections over a compact, aspherical surface $M$.  Ho and Liu~\cite{Ho-Liu-non-orient, Ho-Liu-moduli} computed the components of this moduli space before stabilization (for general structure groups).  In Section~\ref{coarse-moduli}, we combine our work with T. Lawson's results on the Bott map in deformation $K$--theory~\cite{Lawson-simul} to study these moduli spaces after stabilizing with respect to the rank.  In particular, we prove:

\begin{corollary} Let $M$ be a compact, aspherical surface.  Then the fundamental group of the stable moduli space $\mcM (\pi_1 M)$ is isomorphic to $K^{-1} (\Sigma) \isom \K_1 (\pi_1 \Sigma)$, and if $M$ is orientable then $\pi_2 \mcM (\pi_1 M) \isom \bbZ$.
\end{corollary}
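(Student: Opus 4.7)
The plan is to combine Theorem~\ref{main-thm} with T.~Lawson's identification of the stable moduli space in terms of the Bott cofiber in deformation $K$--theory (to be recalled in Section~\ref{coarse-moduli}). Lawson's results provide a map from $\mcM(\pi_1 M)$ to the infinite loop space of the cofiber $C_\beta$ of the Bott map $\beta\co \Sigma^2 \K(\pi_1 M) \maps \K(\pi_1 M)$, inducing isomorphisms on homotopy groups in positive degrees. From the cofiber sequence I would extract the long exact sequence
\[
\cdots \maps \K_{n-2}(\pi_1 M) \xrightarrow{\beta} \K_n(\pi_1 M) \maps \pi_n C_\beta \maps \K_{n-3}(\pi_1 M) \maps \cdots,
\]
and use connectivity of $\K$, which gives $\K_k(\pi_1 M) = 0$ for $k < 0$, to collapse this sequence in low degrees.

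For $n = 1$ the sequence degenerates to $\pi_1 C_\beta \isom \K_1(\pi_1 M)$, and Theorem~\ref{main-thm} identifies this with $K^{-1}(M)$, which establishes the first assertion.

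For $n = 2$ with $M = \Sigma_g$ orientable, the sequence reduces to
\[
\K_0(\pi_1 \Sigma_g) \xrightarrow{\beta} \K_2(\pi_1 \Sigma_g) \maps \pi_2 C_\beta \maps 0,
\]
so $\pi_2 \mcM(\pi_1 \Sigma_g) \isom \mathrm{coker}(\beta)$. By the Atiyah--Bott connectedness result for $\Hom(\pi_1 \Sigma_g, U(n))$, the deformation $K_0$ group is $\K_0(\pi_1 \Sigma_g) \isom \bbZ$, generated by the trivial representation; Theorem~\ref{main-thm} together with Bott periodicity on $\Sigma_g$ gives $\K_2(\pi_1 \Sigma_g) \isom K^0(\Sigma_g) \isom \bbZ^2$. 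Granting that the isomorphism of Theorem~\ref{main-thm} is $\mathbf{ku}$--linear, the Bott map on the left matches the natural map $\K_0(\pi_1 \Sigma_g) \maps K^0(\Sigma_g)$ sending a representation to its associated flat bundle, whose image is the rank--one summand; taking cokernel yields $\bbZ$.

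The main obstacle will be verifying that the isomorphism of Theorem~\ref{main-thm} respects the $\mathbf{ku}$--module structures on $\K(\pi_1 M)$ and $\Map(M,\mathbf{ku})$, so that the two Bott maps are identified and the commutative square needed for the cokernel computation is available. This should follow by inspecting the explicit construction of the comparison map used in the proof of Theorem~\ref{main-thm}; once this compatibility is in place, the computations above go through routinely.
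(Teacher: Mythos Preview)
Your plan for $\pi_1$ is correct and matches the paper: connectivity of $\K(\pi_1 M)$ collapses Lawson's long exact sequence to give $\pi_1 \Rdef(\pi_1 M)\isom \K_1(\pi_1 M)$, and Proposition~\ref{rep-ring} identifies $\pi_1 \Rdef$ with $\pi_1 \mcM(\pi_1 M)$; Theorem~\ref{main-thm} then gives $K^{-1}(M)$.

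For $\pi_2$ there is a real gap in your proposal. You correctly reduce to computing the cokernel of $\beta\co \K_0(\pi_1 M^g)\to \K_2(\pi_1 M^g)$, but your method for computing it relies on showing that the isomorphism of Theorem~\ref{main-thm} is $\mathbf{ku}$--linear, so that $\beta$ matches the map ``rank of the associated bundle'' into $K^0(\Sigma_g)$. The paper explicitly flags this compatibility as a subtle, unresolved issue (see the discussion following the proof of Theorem~\ref{main-thm}): the zig-zag of weak equivalences passes through intermediate spectra for which no $\mathbf{ku}$--algebra structure has been constructed. So ``inspecting the explicit construction'' will not suffice.

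The paper sidesteps this entirely by exploiting naturality in the \emph{group} variable rather than compatibility with topological $K$--theory. Since $\K_0(\pi_1 M^g)\isom\bbZ$ is generated by the ring unit $[1]$, one has $\beta([1]) = q^*(b)$, where $q\co \pi_1 M^g\to\{1\}$ and $b\in\pi_2\mathbf{ku}$ is the Bott class. The inclusion $\eta\co\{1\}\hookrightarrow\pi_1 M^g$ gives a retraction $\eta^*\co\K_2(\pi_1 M^g)\to\pi_2\mathbf{ku}$ with $\eta^*\circ q^* = \mathrm{Id}$. Now one only needs the abstract groups $\K_2(\pi_1 M^g)\isom\bbZ^2$ and $\pi_2\mathbf{ku}\isom\bbZ$ (which Theorem~\ref{main-thm} provides), together with the elementary fact that any section of a surjection $\bbZ^2\to\bbZ$ has cokernel $\bbZ$. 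No compatibility between Theorem~\ref{main-thm} and the $\mathbf{ku}$--module structure is needed.
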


Lawson's results naturally lead to a conjectural description of the homotopy type of $\mcM(\pi_1 M)$ (for a compact, aspherical surfaces $M$) .  For an orientable surface $M^g$, we expect that $\mcM(\pi_1 M^g) \heq \mathrm{Sym}^\infty (M^g)$; for a non-orientable surface $\Sigma$ we expect a homotopy equivalence $\mcM(\pi_1 \Sigma) \heq (S^1)^k \coprod (S^1)^k$, where $k$ is the rank $H^1(\Sigma; \bbZ)$.

Theorem~\ref{main-thm} relies on Morse theory for the Yang--Mills functional, as devoped by Atiyah and Bott~\cite{A-B}, Daskalopoulos~\cite{Dask}, and R{\aa}de~\cite{Rade}; the key analytical input comes from Uhlenbeck's compactness theorem~\cite{Uhl, Wehrheim}.  The non-orientable case uses recent work of Ho and Liu~\cite{Ho-Liu-non-orient, Ho-Liu-ctd-comp-II} regarding representation spaces and Yang--Mills theory for non-orientable surfaces.  
Deformation $K$--theory and Yang--Mills theory are connected by the well-known fact that representations of the fundamental group induce flat connections, which form a critical set for the Yang--Mills functional.  In Section~\ref{S^1}, we motivate our arguments by giving a proof along these lines that $\K_*(\bbZ) \isom K^{-*}(S^1)$. 

This paper is organized as follows.  In Section~\ref{K-def}, we introduce and motivate deformation $K$--theory, and explain how the McDuff--Segal group completion theorem provides a convenient model for the zeroth space of the $\Omega$--spectrum $\K (\pi_1 M)$.  In Section~\ref{rep-flat-section}, we discuss the precise relationship between representation varieties and spaces of flat connections.  In Section~\ref{Harder-Narasimhan} we discuss the Harder-Narasimhan stratification on the space of holomorphic structures and its relation to Morse theory for the Yang--Mills functional.  The main theorem is proven in Section~\ref{main-thm-section}, using the results of Sections~\ref{K-def}, \ref{rep-flat-section}, and \ref{Harder-Narasimhan}. 
Section~\ref{coarse-moduli} discusses T. Lawson's work on the Bott map and its implications for the stable moduli space of flat connections.  In Section~\ref{excision}, we explain how the failure of Theorem~\ref{main-thm} in degree zero leads to a failure of excision for connected sum decompositions of Riemann surfaces.  Finally, we have included an appendix discussing the holonomy representation associated to a flat connection.

 {\bf Acknowledgments.}
   
Parts of this work first appeared in my Stanford University Ph.D. thesis~\cite{Ramras-thesis}.  I thank my advisor, Gunnar Carlsson, who suggested this topic and provided crucial encouragement and advice.  I also thank T. Baird, R. Cohen, Z. Fiedorowicz, R. Friedman, I. Leary, T. Lawson, R. Lipshitz, M. Liu, R. Mazzeo, D. Morris, B. Parker, R. Schoen, and K. Wehrheim for helpful conversations, and N.-K. Ho, J. Stasheff, and the referee for comments on previous drafts.  Portions of this article were written at Columbia University, and I would like to thank the Columbia mathematics department for its hospitality.  The appendix was expertly typed by Bob Hough.

This research was partially supported by an NDSEG fellowship, from the Department of Defense, and later by an NSF graduate research fellowship.
    

\section{Deformation K-theory}$\label{K-def}$
    
In this section, we motivate and introduce Carlsson's notion of deformation $K$--theory~\cite{Carlsson-derived-rep} and discuss its basic properties.  Deformation $K$--theory is a contravariant functor from discrete groups to spectra, and is meant to capture homotopy-theoretical information about the representation spaces of the group in question.  As first observed by T. Lawson~\cite{Lawson-thesis}, this spectrum may be constructed as the $K$--theory spectrum associated to a topological permutative category of representations (for details see~\cite[Section 2]{Ramras-excision}). 
Here we will take a more naive, but essentially equivalent, approach.  The present viewpoint makes clear the precise analogy between deformation $K$--theory and the classical representation ring.  

\subsection{Deformations of representations and the Atiyah--Segal theorem}$\label{deformations}$

Associated to any group $\Gamma$, one has the (unitary) representation ring $R(\Gamma)$, which consists of ``virtual isomorphism classes" of representations.  Each representation $\rho\co\Gamma \to U(n)$ induces a vector bundle $E_{\rho} = E\Gamma \cross_{\Gamma} \bbC^n$ (where $\Gamma$ acts on $E\Gamma$ by deck transformations and on $\bbC^n$ via the representation $\rho$) over the classifying space $B\Gamma$, and this provides a map $R(\Gamma) \stackrel{\alpha}{\to} K^0(B\Gamma)$.  When $\Gamma$ is a compact Lie group, the Atiyah--Segal theorem states that $\alpha$ becomes an isomorphism after completing $R(\Gamma)$ at its augmentation ideal.  

Now consider the simplest infinite discrete group, namely $\Gamma = \bbZ$.  Representations of $\bbZ$ are simply unitary matrices, and isomorphism classes of representations are conjugacy classes in $U(n)$.  By the spectral theorem, these conjugacy classes correspond to points in the symmetric product $\mathrm{Sym}^n(S^1)$, and the natural map $\coprod_n \mathrm{Sym}^n(S^1) \to R(\bbZ)$ is injective.  So the discrete representation ring of $\bbZ$ is quite large, and bears little relation to $K$--theory of $B\bbZ = S^1$: every complex vector bundle over $S^1$ is trivial, and so $K^0(S^1)$ is just the integers.  

In this setting, deformations of representations play an important role. A deformation of a representation $\rho_0\co \Gamma\to U(n)$ is simply a representation $\rho_1$ and a continuous path of representations $\rho_t$ connecting $\rho_0$ to $\rho_1$.  The path $\rho_t$ now induces a bundle homotopy $E_{\rho_t}$ between $E_{\rho_0}$ and $E_{\rho_1}$.  Hence the bundle associated to $\rho_0$ is \emph{isomorphic} to the bundle associated to each of its deformations, and the map from representations to $K$--theory factors through deformation classes.\footnote{
For finite groups, this discussion is moot: any deformation of a representation $\rho$ is actually isomorphic to $\rho$, because the trace of a representation gives a continuous, complete invariant of the isomorphism type, and on representations of a fixed dimension, the trace takes on only finitely many values.  Hence when $G$ is finite, deformations are already taken into account by the construction of $R(G)$.
}  
Returning to the example $\Gamma = \bbZ$, we observe that since $U(n)$ is path connected, the natural map from deformation classes of representations to $K$--theory of $B\bbZ$ group completes to an isomorphism. 

With this situation understood, one is inclined to look for an analogue of the representation ring which captures deformations of representations, i.e. the topology of representation spaces.  
The most naive approach fails rather badly: the monoid of deformation classes $\coprod_n \pi_0 \Hom(\Gamma, U(n))$ admits a well-defined map to $K^0(B\Gamma)$, but (despite the case $\Gamma = \bbZ$) this map does not usually group-complete to an isomorphism: the representation spaces $\Hom(\Gamma, U(n))$ are compact CW-complexes, so have finitely many components, but there can be infinitely many isomorphism types of $n$--dimensional bundles over $B\Gamma$.  In the case of Riemann surfaces (i.e. complex curves) $\Sigma$, the spaces $\Hom(\pi_1 \Sigma, U(n))$ are always connected (see discussion at the end of Section~\ref{gp-comp}), so the monoid of deformation classes is just $\bbN$ and its group completion is $\bbZ$; on the other hand bundles over a Riemann surface are determined by their dimension and first Chern class (and all Chern classes are realized) so $K^0(\Sigma) = \bbZ\oplus \bbZ$.  Note here that $\Sigma = B(\pi_1 \Sigma)$ except in the case of the Riemann sphere.

The deformation-theoretical approach is not doomed to failure, though. 
Let $\Rep(\Gamma)$ denote the topological monoid of unitary representation spaces, and let $\mathrm Gr$ denote the Grothendieck group functor.
Carlsson's deformation $K$--theory spectrum $\K (\Gamma)$~\cite{Carlsson-derived-rep} is a lifting of the functor $\mathrm{Gr} \left(\pi_0 \Rep(\Gamma)\right)$ to the category of spectra, or in fact, $\mathbf{ku}$--algebras, in the sense that
$$\pi_0 \K(\Gamma) \isom\mathrm{Gr} \left(\pi_0 \Rep(\Gamma)\right).$$
The isomorphism $\K_*(\pi_1 (M))\isom K^{-*}(M)$ for $*>0$ in Theorem~\ref{main-thm} may be seen as a correction to the fact that $\mathrm{Gr} \left(\pi_0 \Rep(\pi_1 \Sigma)\right) \maps K^0(\Sigma)$
fails to be an isomorphism when $\Sigma$ is a compact, aspherical Riemann surface.  

We conclude this section by noting two cases in which deformation $K$-theory fails to agree with topological $K$-theory, even in high dimensions.  Lawson showed~\cite[Example 34]{Lawson-prod} that the deformation $K$--theory of the integral Heisenberg group $H^3$, whose classifying space is a 3--manifold, is 2--periodic starting in dimension 1.   However, its homotopy groups are infinitely generated.  
As pointed out to me by Ian Leary, there also exist groups $\Gamma$ which have no finite dimensional unitary representations, but \emph{do} have non-trivial $K$--theory.  Leary's examples arise from Higman's group~\cite{Higman} 
$$H = \langle a, b, c, d\,\, | \,\,a^b = a^2, \,\, b^c= b^2, \,\, c^d = c^2,\,\, d^a = d^2 \rangle,$$
which has no finite quotients, and hence no finite-dimensional unitary representations (a representation would give a linear quotient, and by a well-known theorem of Malcev, finitely generated linear groups are residually finite).  Now, Higman's group has the (co)homology of a point,\footnote{This can be proven using the Mayer--Vietoris sequence for a certain amalgamation decomposition of $H$~\cite{BDH}, or from the fact that the presentation 2-complex for $H$ is a model for $BH$ (with 4 one-cells and 4 two-cells).  The latter fact follows from Higman's proof that $H \neq \{1\}$.} so can now build a Kan--Thurston group for $S^2$ by amalgamating two copies of $H$ along the infinite cyclic subgroups generated, say, by $a\in H$.  The resulting group $G$ still admits no unitary representations but now has the integral (co)homology, hence the $K$--theory, of a 2--sphere. In this case $\K(G) = \K(\{1\}) = \mathbf{ku}$.

\subsection{The construction of deformation K--theory}$\label{K-def2}$

For the rest of this section, we fix a discrete group $\Gamma$.  The construction of the (unitary) representation ring $R(\Gamma)$ may be broken down into several steps: one begins with the \emph{sets} $\Hom(\Gamma, U(n))$, which form a monoid under direct sum; next, one takes isomorphism classes by modding out the actions of the groups $U(n)$ on the sets $\Hom(\Gamma, U(n))$.  The monoid structure descends to the quotient, and in fact tensor product now induces the structure of a semi-ring.  Finally, we form the Grothendieck ring $R(\Gamma)$ of this semi-ring of isomorphism classes.  Deformation $K$--theory (additively, at least) may be constructed simply by replacing each step in this construction by its homotopy theoretical analogue.  To be precise, we begin with the \emph{space} 
$$\Rep(\Gamma) = \coprod_{n=0}^{\infty} \Hom(\Gamma, U(n)),$$
which is a topological monoid under block sum.  Rather than passing to $U(n)$--orbit spaces, we now form the homotopy quotient 
$$\Rep(\Gamma)_{hU} = \coprod_{n=0}^{\infty} EU(n) \cross_{U(n)} \Hom(\Gamma, U(n)).$$
Block sum of unitary matrices induces maps $EU(n)\cross EU(m) \to EU(n+m)$, and together with the monoid structure on $\Rep(\Gamma)$ these give $\Rep(\Gamma)_{hU}$ the structure of a topological monoid (for associativity to hold, we must use a functorial model for $EU(n)$, as opposed to the infinite Stiefel manifolds; see Remark~\ref{mixed}).  Finally, we apply the homotopical version of the Grothendieck construction to this topological monoid and call the resulting space $\K(\Gamma)$, the (unitary) deformation $K$--theory of $\Gamma$. 

\begin{definition} The deformation $K$--theory of a discrete group $\Gamma$ is the space
$$\K(\Gamma) := \Omega B \left(\Rep(\Gamma)_{hU}\right),$$
whose homotopy groups we denote by $\K_*(\Gamma) = \pi_* \K(\Gamma)$.
\end{definition}

Here $B$ denotes the simplicial bar construction, namely the classifying space of the topological category with one object and with $\Rep(\Gamma)_{hU}$ as its space of morphisms.  Note that $\K(\Gamma)$ is a contravariant functor from discrete groups to spaces.

It was shown in~\cite[Section 2]{Ramras-excision} that the above space $\K(\Gamma)$ is weakly equivalent to the zeroth space of the connective $\Omega$--spectrum associated to T. Lawson's topological permutative category of unitary representations; in particular the homotopy groups of this spectrum agree with the homotopy groups of the space $\K(\Gamma)$.
We note that constructing a ring structure in deformation $K$--theory requires a subtler approach, and this has been carried out by T. Lawson~\cite{Lawson-prod}.

The first two homotopy groups of $\K(\Gamma)$ have rather direct meanings: $\K_0(\Gamma)$ is the Grothendieck group of virtual connected components of representations, i.e. $\mathrm{Gr} \left(\pi_0 \Rep(\Gamma)\right)$~\cite[Section 2]{Ramras-excision}.  It follows from work of Lawson~\cite{Lawson-simul} that the group $\K_1(\Gamma)$ is a stable version of the group $\pi_1 \Hom(\Gamma, U(n))/U(n)$; a precise discussion will be given in Section~\ref{coarse-moduli}.

\begin{remark}\label{mixed}  
In~\cite{Ramras-excision}, the simplicial model $E^S U(n)$ for $EU(n)$ is used; in this paper we will need to use universal bundles for Sobolev gauge groups, where the simplicial model may not give an actual universal bundle.  Hence it is more convenient to use Milnor's infinite join construction $E^J U(n)$~\cite{Milnor-univ-bundles-2}, which is functorial and applies to all topological groups.  These two constructions are related by the ``mixed model" $B^M U(n) = \left(E^S U(n)\cross E^J U(n) \right)/U(n)$, which maps by weak equivalences to both versions of $BU(n)$.
\end{remark}

\subsection{Group completion in deformation K-theory}$\label{gp-comp}$

The starting point for our work on surface groups is an analysis of the consequences of McDuff--Segal Group Completion theorem~\cite{McDuff-Segal} for deformation $K$--theory, as carried out in~\cite{Ramras-excision}.  Here we recall that result and explain its consequences for surface groups.  Given a topological monoid $M$ and an element $m\in M$, we say that $M$ is \emph{stably group-like} with respect to $m$ if the submonoid of $\pi_0 M$ generated by the component containing $m$ is cofinal (in $\pi_0 M$).  Explicitly, $M$ is stably-group-like with respect to $m$ if for every $x\in M$, there exists an element $x^{-1}\in M$ such that $x\cdot x^{-1}$ is connected by a path to $m^n$ for some $n\in \bbN$.  We then have:

\begin{theorem}[\cite{Ramras-excision}] $\label{gp-completion-cor}$
Let $\Gamma$ be a finitely generated discrete group such that $\Rep(\Gamma)$ is stably group-like with respect to $\rho \in \Hom(\Gamma, U(k))$.
Then there is a weak equivalence  
$$ \K(\Gamma) \heq \hocolim \left( \Rep(\Gamma)_{hU} \stackrel{\oplus \rho}{\maps} \Rep(\Gamma)_{hU} 
			\stackrel{\oplus\rho}{\maps} \cdots \right),
$$
where $\oplus \rho$ denotes block sum with the point $[*_k, \rho]\in EU(k) \cross_{U(k)} \Hom(\Gamma, U(k)$.\end{theorem}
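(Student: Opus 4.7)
The plan is to apply the McDuff--Segal group completion theorem to the topological monoid $M = \Rep(\Gamma)_{hU}$, which gives a homology isomorphism $H_*(M)[\pi_0 M^{-1}] \isom H_*(\Omega BM) = H_*(\K(\Gamma))$, where the localization is with respect to Pontryagin product by elements of $\pi_0 M$.  I then want to realize the left-hand side geometrically as the telescope $M_\infty := \hocolim(M \stackrel{\oplus \rho}{\maps} M \stackrel{\oplus \rho}{\maps} \cdots)$.

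The key identification is that $H_*(M_\infty) \isom H_*(M)[[\rho]^{-1}]$: this follows because singular homology commutes with filtered homotopy colimits, and the telescope of the directed system of abelian groups $H_*(M) \to H_*(M) \to \cdots$ under Pontryagin multiplication by $[\rho]$ is by definition this localization.  The stably group-like hypothesis guarantees that inverting $[\rho]$ already inverts every class in $\pi_0 M$, since each such class divides some $[\rho]^n$, so $H_*(M)[[\rho]^{-1}] = H_*(M)[\pi_0 M^{-1}]$.

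Next I would construct a map $\phi : M_\infty \to \Omega BM$ compatible with these identifications.  The unit map $M \to \Omega BM$ sends $[\rho]$ to an invertible element of the grouplike H-space $\Omega BM$, so right-multiplication by the image of $\rho$ is homotopic to the identity there; this homotopy commutativity assembles the iterated composites into a map out of the telescope, and by construction $\phi$ induces the localization on homology, hence is a homology equivalence.  To upgrade this to a weak equivalence, note that $M_\infty$ and $\Omega BM$ are both H-spaces (the former inheriting a grouplike structure from block sum after inverting $\rho$), so their path components have abelian fundamental groups acting trivially on higher homotopy groups and are thus simple.  The map $\phi$ induces an isomorphism on $\pi_0$, since both groups are identified with the Grothendieck group of $\pi_0 M$ via the stably group-like condition and the group completion theorem respectively; the homology equivalence then upgrades component-wise to a weak equivalence.

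I expect the subtlest technical point to be verifying that McDuff--Segal applies cleanly to $M = \Rep(\Gamma)_{hU}$, in particular that block sum is sufficiently homotopy commutative on the homotopy orbit space (which requires a functorial model of $EU(n)$ as noted in Remark~\ref{mixed}) so that $\pi_0 M$ acts centrally on the Pontryagin ring $H_*(M)$ and the localization makes sense.  Once this is secured, the argument is a clean combination of the group completion theorem, the telescope-equals-localization principle, and the standard H-space upgrade from homology equivalences to weak equivalences.
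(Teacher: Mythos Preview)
Your overall strategy---apply McDuff--Segal to $M = \Rep(\Gamma)_{hU}$, identify the localization with the telescope via the stably group-like hypothesis, and then upgrade the homology equivalence to a weak equivalence---is exactly the approach indicated in the paper (the result is quoted from \cite{Ramras-excision}, and the surrounding discussion confirms this is the method).

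There is, however, a genuine gap in your upgrade step. You assert that $M_\infty$ is an $H$--space ``inheriting a grouplike structure from block sum after inverting $\rho$,'' and deduce from this that its components are simple. But the telescope of a homotopy-commutative monoid under multiplication by a fixed element is \emph{not} automatically an $H$--space: the multiplication does not extend to the telescope without additional coherence, and $\Rep(\Gamma)_{hU}$ is only homotopy commutative, not strictly so. This is precisely the point the paper flags immediately after the theorem: ``The novel aspect of this result is that, unlike elsewhere in algebraic $K$--theory, Quillen's $+$--construction does not appear. This is due to the fact that the fundamental group on the right-hand side is already abelian, a fact which (in general) depends on rather special properties of the unitary groups.'' In other words, the abelianness of $\pi_1 M_\infty$ is not a formal consequence of an $H$--space structure but requires a direct argument specific to $U(n)$ (in \cite{Ramras-excision} this is the ``anchored'' condition, which uses that $U(n)$ is connected and that the stabilizer of any representation is connected; cf.\ the reference to this condition in the proof of Proposition~\ref{rep-ring}). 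Your sketch should replace the $H$--space claim with this direct verification.

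A minor secondary point: your construction of the comparison map $\phi\colon M_\infty \to \Omega BM$ is also somewhat informal. The standard route is to note that $\Omega BM$ is grouplike, so multiplication by the image of $\rho$ is a self-equivalence, and then build the map level-by-level using a chosen homotopy inverse; alternatively one can quote the form of McDuff--Segal that already produces the map out of the telescope. Either way this is routine once stated carefully.
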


Here, and throughout this article, $\hocolim$ refers to the mapping telescope of a sequence of maps.
The novel aspect of this result is that, unlike elsewhere in algebraic $K$--theory, Quillen's +-construction does not appear.  This is due to the fact that the fundamental group on the right-hand side is already abelian, a fact which (in general) depends on rather special properties of the unitary groups.
In low dimensions, this result has the following manifestation:

\begin{corollary}$\label{model}$ Let $M$ be either the circle or an aspherical compact surface.  Then there is a weak equivalence between $\K(\pi_1 (M))$ and
the space 
$$ \hocolim_{\stackrel{\maps}{\oplus 1}} ( \Rep(\pi_1 M)_{hU} ) := \hocolim \left( \Rep(\pi_1 M)_{hU} \stackrel{\oplus 1}{\maps} \Rep(\pi_1 M)_{hU} 
\stackrel{\oplus 1}{\maps} \cdots \right)$$
where $\oplus 1$ denotes the map induced by block sum with the identity matrix $1\in U(1)$.
\end{corollary}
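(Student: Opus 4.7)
The plan is to apply Theorem~\ref{gp-completion-cor} with $\rho$ the trivial one-dimensional representation $\pi_1 M \to U(1)$. Once that theorem is in force, the corollary is immediate, because block sum with the class $[*, \rho] \in EU(1) \cross_{U(1)} \Hom(\pi_1 M, U(1))$ is induced on $\Rep(\pi_1 M)_{hU}$ by block sum with the matrix $1 \in U(1)$. So the only nontrivial step is to verify that $\Rep(\pi_1 M)$ is stably group-like with respect to this $\rho$; equivalently, that the submonoid of $\pi_0 \Rep(\pi_1 M)$ generated by $[\rho]$ is cofinal.

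I would dispose of the easy cases first. When $M = S^1$, we have $\Hom(\bbZ, U(n)) \homeo U(n)$, which is connected, so $\pi_0 \Rep(\pi_1 S^1) \isom \bbN$ is generated outright by $[\rho]$. When $M$ is a compact orientable surface of genus $g \geq 1$, the classical connectedness of $\Hom(\pi_1 M, U(n))$ (alluded to at the end of Section~\ref{deformations}) gives the same conclusion, and cofinality is trivial.

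The main obstacle is the non-orientable case, where $\Hom(\pi_1 M, U(n))$ may have more than one component. Here I would invoke the component computations of Ho and Liu~\cite{Ho-Liu-ctd-comp-I, Ho-Liu-ctd-comp-II}: for non-orientable $M$ the components of $\Hom(\pi_1 M, U(n))$ are indexed by a $\bbZ/2$-valued obstruction, and block sum of representations adds these invariants modulo $2$. Given any $[\sigma] \in \pi_0 \Hom(\pi_1 M, U(n))$ in the nontrivial component, I can pick another representation $\sigma'$ of $\pi_1 M$ lying in the nontrivial component of some $\Hom(\pi_1 M, U(m))$; then $\sigma \oplus \sigma'$ lies in the trivial component of $\Hom(\pi_1 M, U(n+m))$, which is path-connected to $\rho^{\oplus (n+m)}$. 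This establishes the stably-group-like property.

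With the hypothesis of Theorem~\ref{gp-completion-cor} verified in each case, that theorem yields precisely the asserted weak equivalence. The technical heart of the argument lies in the non-orientable case, where I rely entirely on the Ho--Liu identification of $\pi_0 \Hom(\pi_1 M, U(n))$ and its additivity under direct sum; everything else is formal.
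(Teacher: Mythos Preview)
Your proposal is correct and follows the same overall structure as the paper: reduce to Theorem~\ref{gp-completion-cor} by verifying that $\Rep(\pi_1 M)$ is stably group-like with respect to the trivial representation $1\in U(1)$. The only difference is in how that hypothesis is checked. The paper's primary route is internal: it defers to Corollaries~\ref{rep-ctd} and~\ref{rep-ctd-no}, which are proven later via Yang--Mills theory (connectivity of $\flatc^k(n)$ and the Ho--Liu $\bbZ/2$ obstruction in the non-orientable case). You instead invoke the Ho--Liu component computations~\cite{Ho-Liu-ctd-comp-I, Ho-Liu-ctd-comp-II} directly, which the paper itself explicitly flags as an alternative method. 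Your approach has the advantage of not depending on the analytical results of Section~\ref{Harder-Narasimhan}, at the cost of importing the quasi-Hamiltonian machinery of~\cite{AMM} that underlies Ho--Liu's argument; the paper's route keeps the argument self-contained but makes Corollary~\ref{model} logically depend on material proven two sections later. Either way, the substance of the non-orientable case---$\bbZ/2$-valued obstruction, additive under $\oplus$---is identical.
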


There are at least two ways to show that $\Rep(\pi_1 M)$ is stably group-like with respect to $1\in \Hom(\pi_1 M, U(1))$.  In Corollaries~\ref{rep-ctd} and~\ref{rep-ctd-no} we use Yang--Mills theory to show that $\Rep(\pi_1 M)$ is stably group-like for any compact, aspherical surface $M$.  (In the orientable case, this amounts to showing that the representation spaces are all connected, which is a well-known folk theorem.)  This argument is quite close to Ho and Liu's proof of connectivity for the moduli space of flat connections~\cite[Theorem 5.4]{Ho-Liu-non-orient}.  
For most surfaces, other work of Ho and Liu~\cite{Ho-Liu-ctd-comp-II} gives an alternative method, depending on Alekseev, Malkin, and Meinrenken's theory of quasi-Hamiltonian moment maps~\cite{AMM}.
A version of that argument, adapted to the present situation, appears in the author's thesis~\cite[Chapter 6]{Ramras-thesis}.


\section{Representations and flat connections}$\label{rep-flat-section}$

Let $M$ denote an $n$--dimensional, compact, connected manifold, with a fixed basepoint $m_0\in M$.  
Let $G$ be a compact Lie group, and $P\stackrel{\pi}{\to} M$ be a smooth principal $G$--bundle, with a fixed basepoint $p_0\in \pi^{-1}(m_0) \subset P$.  Our principal bundles will always have a $\emph{right}$ action of the structure group $G$.
In this section we explain how to pass from $G$--representation spaces of $\pi_1(M)$ to spaces of flat connections on principal $G$--bundles over $M$, which form critical sets for the Yang--Mills functional.  
The main result of this section is the following proposition, which we state informally for the moment.

\medskip
{\bf Proposition \ref{rep-flat}}\quad
{\sl For any $n$--manifold $M$ and any compact, connected Lie group $G$, holonomy induces a $G$--equivariant homeomorphism
$$\coprod_{[P_i]} \flatc(P_i)/\G_0 (P_i) \stackrel{\overline{\mH}}{\maps} \Hom(\pi_1(M), G),$$
where the disjoint union is taken over some set of representatives for the (unbased) isomorphism classes of principal $G$--bundles over $M$.  (Note that to define $\overline{\mH}$ we choose, arbitrarily, a base point in each representative bundle $P_i$.)
}
\medskip

Here $\G_0 (P)$ denotes the based gauge group, and consists of all principal bundle automorphisms of 
$P$ that restrict to the identity on the fiber over $m_0\in M$.

\subsection{The one-dimensional case}$\label{S^1}$
Before beginning the proof of Proposition~\ref{rep-flat}, we explain how this result immediately leads to an analogue of the Atiyah--Segal theorem for the infinite cyclic group $\bbZ$.  This argument will motivate our approach for surface groups.

By Corollary~\ref{model}, deformation $K$--theory of $\Z$ is built from the homotopy
orbit spaces 
$$(U(n)^{\mathrm{Ad}})_{hU(n)} :=EU(n)\cross_{U(n)} \Hom(\Z, U(n)),$$
and the homotopy groups of $LBU(n) = \Map(S^1, BU(n))$ are just the complex $K$-groups of $S^1 = B\Z$ (in dimensions $0<*<2n$).  Thus the well-known homotopy equivalence
\begin{equation}\label{ad-EG}
(U(n)^{\mathrm{Ad}})_{hU(n)} \heq LBU(n)
\end{equation}
may be interpreted as an Atiyah--Segal theorem for the group $\Z$, and upon taking colimits (\ref{ad-EG}) yields an isomorphism $\K_*(\bbZ) \isom K^{-*}(S^1)$ for any $*\geqs 0$. 
(The equivalence (\ref{ad-EG}) is well-known for any group $G$, but the only general reference of which I am aware is the elegant proof given by K. Gruher in her thesis~\cite{Gruher}).)  

Proposition~\ref{rep-flat} actually leads to a proof of (\ref{ad-EG}) for any compact, connected Lie group $G$.  Connections $A$ over the circle are always flat, and the $\G_0 (P)$ acts freely, so by Proposition~\ref{rep-flat} and a basic fact about homotopy orbit spaces we have
$$\Hom(\Z, G)_{hG} \homeo \left(\A (S^1\cross G)/ \Map_* (S^1, G)\right)_{hG} \heq \left(\A (S^1\cross G)\right)_{h \Map(S^1, G)}.$$
But connections form a contractible (affine) space, so the right hand side is the classifying space of the full gauge group.  By Atiyah and Bott~\cite[Section 2]{A-B}, $\Map (S^1, BG) = LBG$ is a model for $B\Map(S^1, G)$, so $(G^{\mathrm{Ad}})_{hG} \heq LBG$ as desired.

When $\Z$ is replaced by the fundamental group of a two-dimensional surface, one can try to mimic this argument.  Not all connections are flat in this case, but flat connections do form a critical set for the Yang--Mills functional $L\co \A \to \bbR$.  In Section~\ref{Harder-Narasimhan}, we will use Morse theory for $L$ to prove a connectivity result for the space of flat connections.

\subsection{Sobolev spaces of connections and the holonomy map}$\label{rep-flat-sec}$

In order to give a precise statement and proof of Proposition~\ref{rep-flat}, we need to introduce the relevant Sobolev spaces of connections and gauge transformations.  Our notation and discussion follow Atiyah--Bott~\cite[Section 14]{A-B}, and another excellent reference is the appendix to Wehrheim~\cite{Wehrheim}.

 We use the notation $L^p_k$ to denote functions with $k$ weak (i.e. distributional) derivatives, each in the Sobolev space $L^p$.
We will record the necessary assumptions on $k$ and $p$ as they arise.  The reader interested only in the applications to deformation $K$--theory may safely ignore these issues, noting only that all the results of this section hold in the Hilbert space $L^2_k$ for large enough $k$.  When $n=2$, our main case of interest, we just need $k\geqs 2$.

\begin{definition}$\label{Sobolev}$ Let $k\geqs 1$ be an integer, and let $1\leq p < \infty$.  We denote the space of all connections on the bundle $P$ of Sobolev class $L^p_k$ by $\A^{k, p}(P)$.  This is an affine space, modeled on the Banach space of $L^p_k$ sections of the vector bundle $T^* M \otimes \mathrm{ad \,} P$ (here $\mathrm{ad \,} P = P\cross_G \mathfrak g$, and $\mathfrak g$ is the Lie algebra of $G$ equipped with the adjoint action).  Hence $\A^{k,p}(P)$ acquires a canonical topology, making it homeomorphic to the Banach space on which it is modeled.  Flat $L^p_k$ connections are defined to be those with zero curvature.  The subspace of flat connections is denoted by $\flatc^{k,p} (P)$.

We let $\G^{k+1,p}(P)$ denote the gauge group of all bundle automorphisms of $P$ of class $L^p_ {k+1}$, and (when $(k+1)p > n$) we let $\G ^{k+1, p}_0(P)$ denote the subgroup of based automorphisms (those which are the identity on the fiber over $m_0\in M$).  These gauge groups are Banach Lie groups, and act smoothly on $\A^{k,p}(P)$.  We will always use the left action, meaning that we let gauge transformations act on connections by pushforward.  We denote the group of all continuous gauge transformations by $\G(P)$.  Note that so long as $(k+1)p >n$, the Sobolev Embedding Theorem gives a continuous inclusion $\G^{k+1,p}(P)\injects \G(P)$, and hence in this range $\G ^{k+1, p}_0(P)$ is well-defined.
We denote the smooth versions of these objects by $(-)^{\infty} (P)$.
\end{definition}

The following lemma is well-known.

\begin{lemma}$\label{smoothing}$ For $(k+1)p > n$, the inclusion $\G^{k+1,p}(P) \injects \G(P)$ is a weak equivalence.
\end{lemma}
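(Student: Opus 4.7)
The plan is to realise both $\G(P)$ and $\G^{k+1,p}(P)$ as spaces of sections of a single fibre bundle over $M$, and then to apply a standard mollification-plus-tubular-retraction scheme to produce canonical Sobolev approximations of continuous sections in a homotopy-continuous way.

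First, I would identify a gauge transformation with a section of the inner-automorphism bundle $c(P):=P\cross_G G \maps M$ (with $G$ acting on itself by conjugation). Under this identification, $\G(P)$ is the space of continuous sections of $c(P)$ and, since $(k+1)p>n$ forces $L^p_{k+1}\injects C^0$ by Sobolev embedding, $\G^{k+1,p}(P)$ is the space of sections of $c(P)$ of class $L^p_{k+1}$. Pick a faithful unitary embedding $G\injects U(N)$; this realises $c(P)$ as a fibrewise smooth submanifold of the associated vector bundle $\E:=P\cross_G M_N(\bbC)$. By compactness of $M$, there is a tubular neighbourhood $\mcU\subset \E$ of $c(P)$ together with a smooth fibrewise retraction $r\co \mcU \maps c(P)$.

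For $\pi_m$-surjectivity, given $\phi\co S^m\maps \G(P)$, view $\phi$ as a continuous section of the pullback bundle $\mathrm{pr}_M^*\, c(P)\subset \mathrm{pr}_M^*\,\E$ over $S^m\cross M$. Using a finite trivialising cover of $M$, a subordinate partition of unity, and convolution against a smooth bump $\eta_\epsilon$ in local $M$-coordinates, I would produce $\phi_\epsilon\co S^m\maps \Gamma^\infty(\E)$ converging uniformly to $\phi$ as $\epsilon\to 0$. For $\epsilon$ small enough, $\phi_\epsilon(s)(x)$ lies in $\mcU$ for every $(s,x)$ and the straight-line segment in the fibre of $\E$ from $\phi(s)(x)$ to $\phi_\epsilon(s)(x)$ also stays inside $\mcU$; post-composing this linear interpolation with $r$ yields a continuous homotopy in $\G(P)$ from $\phi$ to $r\circ \phi_\epsilon$, and the latter is a smooth, hence $L^p_{k+1}$, family of gauge transformations. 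Applying the identical argument with $S^m$ replaced by $[0,1]\cross S^m$, and cutting off the mollification near $\{0,1\}\cross S^m$ so as to fix the endpoints, provides the matching $\pi_m$-injectivity.

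The main technical obstacle is to verify that these constructions actually land inside $\G^{k+1,p}(P)$. Partition-of-unity mollification produces sections that are smooth along $M$ in local charts and therefore $L^p_{k+1}$ globally; post-composition with the smooth fibrewise retraction $r$ preserves this regularity because, in the range $(k+1)p>n$, $L^p_{k+1}$ consists of continuous functions and is closed under composition with smooth fibrewise maps (this is a standard consequence of the Sobolev multiplication theorems; see, e.g., the appendix of Wehrheim~\cite{Wehrheim}). Once these analytic points are settled, the rest is the standard approximation/retraction strategy that appears throughout the gauge theory literature.
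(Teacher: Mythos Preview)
Your proposal is correct and follows the same approach as the paper: identify gauge transformations with sections of the adjoint bundle $P\times_G G$ and then invoke approximation of continuous sections by Sobolev (or smooth) ones. The paper's proof is a two-line sketch that simply cites ``general approximation results for sections of smooth fiber bundles''; you have unpacked precisely what such an argument entails (embedding in a vector bundle, tubular retraction, mollification, and the Sobolev composition lemma in the range $(k+1)p>n$), so your write-up is a detailed realisation of the paper's outline rather than a different route.
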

\begin{proof} Gauge transformations are simply sections of the adjoint bundle $P\cross_G \mathrm{Ad} (G)$ (see~\cite[Section 2]{A-B}).  Hence this result follows from general approximation results for sections of smooth fiber bundles.
\end{proof}

The continuous inclusion $\G^{k+1,p} (P) \injects \G(P)$ implies that there is a well-defined, continuous homomorphism $r\co\G^{k+1,p}(P) \to G$ given by restricting a gauge transformation to the fiber over the basepoint $m_0\in M$.  To be precise, $r(\phi)$ is defined by $p_0 \cdot r(\phi) = \phi(p_0)$, and hence depends on our choice of basepoint $p_0\in P$.

\begin{lemma}$\label{restriction}$ Assume $G$ is connected and $(k+1)p > n$.  Then the restriction map 
$r\co\G^{k+1,p}(P) \maps G$ induces a homeomorphism 
$\bar{r}\co \G^{k+1,p}(P)/\G^{k+1,p}_0 (P) \stackrel{\homeo}{\maps} G$.  
The same statements hold for the smooth gauge groups.
\end{lemma}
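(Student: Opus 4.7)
The plan is to verify that $r$ is a surjective continuous homomorphism whose kernel is exactly $\G^{k+1,p}_0(P)$, giving a continuous bijection $\bar{r}$; then to promote $\bar{r}$ to a homeomorphism by constructing continuous local sections of $r$.

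First I would establish surjectivity using connectedness of $G$. Fix $g\in G$, choose a local trivialization of $P$ over a small open ball $U$ around $m_0$, and pick a smooth path $\gamma\co[0,1]\to G$ from $e$ to $g$ together with a smooth bump function $\chi\co M\to[0,1]$ supported in $U$ with $\chi(m_0)=1$ and $\chi\equiv 0$ near $\partial U$. In the trivialization, the formula $\phi(x)=\gamma(\chi(x))$ defines a smooth gauge transformation on $U$ that equals $g$ at $m_0$ and equals the identity near $\partial U$, so it extends by the identity to a smooth $\phi\in\G^\infty(P)\subset\G^{k+1,p}(P)$ with $r(\phi)=g$. A direct computation using $G$--equivariance of gauge transformations shows that $r$ is a homomorphism, and its kernel is $\G_0^{k+1,p}(P)$ by definition; hence $r$ descends to a continuous bijective homomorphism $\bar r\co \G^{k+1,p}(P)/\G_0^{k+1,p}(P)\to G$.

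To show $\bar r^{-1}$ is continuous, it suffices to produce a continuous local section of $r$ near each $g\in G$, since this implies $r$ is an open map and hence descends to an open map on the quotient. Near the identity I would use the exponential: choose a neighborhood $V\subset\mathfrak g$ of $0$ on which $\exp$ is a diffeomorphism onto a neighborhood $W$ of $e$, and a smooth bump $\chi$ supported in a trivializing ball around $m_0$ with $\chi(m_0)=1$. In the trivialization, set $s(\exp X)=\exp(\chi X)$ for $X\in V$, extended by the identity off the trivializing ball. This recipe produces smooth gauge transformations depending smoothly on $X$, giving a continuous section $s\co W\to\G^{k+1,p}(P)$ with $r\circ s=\mathrm{id}_W$. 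For arbitrary $g\in G$, use surjectivity to pick $\phi_g$ with $r(\phi_g)=g$ and set $s_g(g')=\phi_g\cdot s(g^{-1}g')$ on $g\cdot W$. Because $r$ admits local sections, it is an open surjection, so $\bar r$ is a homeomorphism.

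Every gauge transformation produced above is smooth, so the same argument goes through verbatim for $\G^\infty(P)$. The point most at risk of being overlooked is whether these cut-off constructions preserve $L^p_{k+1}$ regularity; but because the sections are manufactured from smooth bump functions and the smooth exponential map, this is automatic, and I expect the only genuinely substantive step is the assembly of local sections that upgrades the algebraic quotient to a topological one.
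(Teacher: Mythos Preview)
Your proof is correct and follows essentially the same strategy as the paper: establish surjectivity of $r$ via a bump-function construction near $m_0$ (using connectedness of $G$), and then build continuous local sections of $r$ to show that $\bar r^{-1}$ is continuous. The paper is terser---it simply says local sections exist ``by a similar argument'' and notes that $\pi\circ s$ gives a local inverse to $\bar r$---while you make the section explicit via the exponential map and phrase the conclusion in terms of $r$ being open; these are cosmetic differences, not substantive ones.
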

\begin{proof}  Thinking of gauge transformations as sections of the adjoint bundle, we may deform the identity map $P\to P$ over a neighborhood of $m_0$ so that it takes any desired value at $p_0$ (here we use connectivity of $G$).  Hence $r$ and $\bar{r}$ are surjective.

By a similar argument, we may construct continuous local sections $s\co U\to \G^{\infty}(P)$ of the map $r$, where $U\subset G$ is any chart.  If $\pi\co \G^{\infty}(P) \to \G(P)^{\infty}/\G^{\infty}_0 (P)$ is the quotient map, then the maps $\pi \circ s$ are inverse to $\overline{r}$ on $U$.  Hence $\overline{r}^{-1}$ is continuous.  The same argument applies to $\G^{k+1,p} (P)$. 
\end{proof}

I do not know whether Lemma~\ref{restriction} holds for non-connected groups; certainly the proof shows that the image of the restriction map is always a union of components.

Flat connections are related to representations of $\pi_1 M$ via the holonomy map.  Our next goal is to analyze this map carefully in the current context of Sobolev connections.  
The holonomy of a smooth connection is defined via parallel transport: given a smooth loop $\gamma$ based at $m_0\in M$, there is a unique $A$--horizontal lift $\widetilde{\gamma}$ of $\gamma$ with $\widetilde{\gamma} (0) = p_0$, and the holonomy representation $\mcH(A) = \rho_A$  is then defined by the equation 
$\widetilde{\gamma}(1) \cdot \rho_A ([\gamma]) = p_0.$
(Since flat connections are locally trivial, a standard compactness argument shows that this definition depends only on the homotopy class $[\gamma]$ of $\gamma$.)  It is important to note here that the holonomy map depends on the chosen basepoint $p_0\in P$.
Further details on holonomy appear in the Appendix.

\begin{lemma}$\label{holonomy-cont}$  The holonomy map $\flatc^{k,p} (P) \to \Hom(\pi_1 M, G)$ is continuous if $k\geqs 2$ and $(k-1)p>n$.
\end{lemma}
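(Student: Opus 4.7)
The plan is to reduce to checking continuity one loop at a time and then to realize the parallel-transport equation as a continuous linear ODE in $G$. The target $\Hom(\pi_1 M, G)$ carries the subspace topology inherited from $\prod_{[\gamma]} G$, so it suffices to show that for each class $[\gamma] \in \pi_1 M$ the evaluation map $A \mapsto \rho_A([\gamma]) \in G$ is continuous on $\flatc^{k,p}(P)$. Since holonomy of a flat connection is a homotopy invariant of the loop, we may choose a smooth representative $\gamma \co [0,1] \to M$ of $[\gamma]$.

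Next, cover the compact image $\gamma([0,1])$ by finitely many open sets $U_1, \ldots, U_N$ over which $P$ admits smooth local trivializations, and pick $0 = t_0 < t_1 < \cdots < t_N = 1$ with $\gamma([t_{j-1}, t_j]) \subset U_j$. In each chart the connection is represented by a $\mathfrak{g}$-valued $1$-form $A_j$ of Sobolev class $L^p_k$. The Sobolev Embedding Theorem, under the hypothesis $(k-1)p > n$, gives a continuous inclusion $L^p_k(U_j) \hookrightarrow C^1(U_j)$; in particular the pullback $t \mapsto (\gamma^* A_j)(t)$ is a continuous $\mathfrak{g}$-valued function on $[t_{j-1}, t_j]$ which depends continuously, in the $C^0$ norm, on $A \in \flatc^{k,p}(P)$.

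On each subinterval the horizontal lift in the trivialization is determined by the linear initial-value problem $g_j'(t) = -(\gamma^* A_j)(t)\, g_j(t)$ with prescribed initial data in $G$. The classical continuous-dependence theorem for ODEs shows that the endpoint $g_j(t_j) \in G$ depends continuously on the coefficient $\gamma^* A_j$ in the $C^0$ topology. Concatenating transports across the $N$ subintervals, using the (smooth, hence continuous) transition functions to convert initial data between successive trivializations, produces a continuous composite $A \mapsto \rho_A([\gamma])$; running this for each element of a finite generating set for $\pi_1 M$ finishes the argument.

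The key obstacle is the restriction step: for an arbitrary $L^p_k$ section, evaluation on a $1$-dimensional submanifold has no intrinsic meaning, and the hypothesis $(k-1)p > n$ is precisely what is needed to guarantee that $A$ is in fact at least $C^0$ (indeed $C^1$) on $M$, so that $\gamma^* A_j$ is a bona fide continuous path in $\mathfrak{g}$ varying continuously with $A$. Once this is in hand, the rest is classical ODE theory together with continuity of multiplication in $G$, and further details on the holonomy construction are deferred to the appendix.
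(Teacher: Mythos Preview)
Your argument is correct and shares the paper's overall strategy---reduce to a single loop, invoke the Sobolev embedding $L^p_k \hookrightarrow C^1$ given by $(k-1)p>n$, and then appeal to continuous dependence for ODEs---but the implementations diverge in a useful way. The paper works coordinate-free on the pullback bundle $\gamma^*P$: it argues sequentially, observes that the horizontal vector fields $V(A_i)$ converge in $C^1$, linearly interpolates them to build a single \emph{Lipschitz} vector field on $\gamma^*P \times I$, and then cites continuous dependence of integral curves on initial conditions (Lang). You instead trivialize along $\gamma$ and write parallel transport as the linear ODE $g' = -(\gamma^*A)\,g$ in $G$, invoking continuous dependence of solutions on \emph{coefficients} directly. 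Your route is arguably more elementary and, as the paper itself notes in Remark~\ref{better-cont}, this local-coordinate viewpoint is what allows one to weaken the hypotheses to $k\geqs 1$, $kp>n$ (only $C^0$ embedding is needed, since the ODE is linear in $g$ and Gronwall suffices). The paper's interpolation trick, on the other hand, packages the argument without choosing trivializations or tracking transition functions.
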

\begin{proof}  The assumptions on $k$ and $p$ guarantee a continuous embedding $L^p_k (M) \injects C^1(M)$.  Hence if $A_i \in \fc^{k,p} (P)$ is a sequence of connections converging (in $\fc^{k,p} (P)$) to $A$, then $A_i\to A$ in $C^1$ as well.  We must show that for any such sequence, the holonomies of the $A_i$ converge to the holonomy of $A$.  

It suffices to check that for each loop $\gamma$ the holonomies around $\gamma$ converge.  These holonomies are defined (continuously) in terms of the integral curves of the vector fields $V (A_i)$ on 
$\gamma^* P$ arising from the connections $A_i$.  Since these vector fields converge in the $C^1$ norm, we may assume that the sequence $||V(A_i) - V(A)||_{C^1}$ is decreasing and less than 1.  By interpolating linearly between the $V (A_i)$, we obtain a vector field on $\gamma^*P \cross I$ which at time $t_i$ is just $V (A_i)$, and at time $0$ is $V(A)$.  This is a Lipschitz vector field and hence its integral curves vary continuously in the initial point (Lang~\cite[Chapter IV]{Lang-dg}) completing the proof.
\end{proof}

\begin{remark}\label{better-cont}  With a bit more care, one can prove Lemma~\ref{holonomy-cont} under the weaker assumptions $k\geqs 1$ and $kp>n$.  The basic point is that these assumptions give an embedding $L^p_k(M) \injects C^0 (M)$, and by compactness $C^0(M)\injects L^1(M)$ (and similarly after restricting to a smooth curve in $M$).  Working in local coordinates, one can deduce continuity of the holonomy map from the fact that limits commute with integrals in $L^1([0,1])$.
\end{remark}

\begin{lemma}$\label{orbits}$ 
Assume $p> n/2$ (and if $n=2$, assume $p\geqs 4/3$).  If $G$ is connected, then each $\G^{k+1,p}_0(P)$--orbit in $\flatc^{k,p}(P)$ contains a unique $\G^{\infty}_0 (P)$--orbit of smooth connections.  
\end{lemma}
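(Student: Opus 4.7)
The lemma decomposes into two statements: (i) \emph{existence}, that every $\G^{k+1,p}_0(P)$-orbit in $\flatc^{k,p}(P)$ meets $\flatc^\infty(P)$, and (ii) \emph{uniqueness}, that two smooth flat connections lying in the same Sobolev orbit already lie in a common $\G^\infty_0(P)$-orbit. I would dispatch (ii) first by a standard elliptic bootstrap, then turn to (i), which rests on the regularity theory for Sobolev connections.

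For (ii), suppose $A_1, A_2 \in \flatc^\infty(P)$ and $g \in \G^{k+1,p}_0(P)$ satisfy $g\cdot A_1 = A_2$. In a smooth local trivialisation, the pushforward action is $g\cdot A = gAg^{-1} - (dg)g^{-1}$, so the relation $g\cdot A_1 = A_2$ rearranges to the first-order identity
\[
dg = gA_1 - A_2 g
\]
with smooth coefficients $A_1, A_2$. Under the running hypothesis $(k+1)p > n$, the space $L^p_{k+1}$ is closed under pointwise multiplication, so the right-hand side lies in $L^p_{k+1}$, forcing $dg \in L^p_{k+1}$ and hence $g \in L^p_{k+2}$. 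Iterating yields $g \in L^p_\ell$ for every $\ell$, and Sobolev embedding then gives $g \in \G^\infty_0(P)$.

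For (i), let $A \in \flatc^{k,p}(P)$. The strategy is first to produce some $g \in \G^{k+1,p}(P)$ with $g\cdot A$ smooth, and then to modify $g$ to be based. For the first step one invokes the standard Sobolev regularity theorem for flat connections, available in the appendix of~\cite{Wehrheim}: because $F_A = 0$, one can place $A$ in a Coulomb (harmonic) gauge on sufficiently small coordinate balls, where the Yang--Mills equation reduces to an elliptic system and bootstrapping promotes $A$ to a smooth connection in that gauge; the local gauge transformations then patch together to a global element of $\G^{k+1,p}(P)$. The hypotheses $p > n/2$ (and the sharper $p \geqs 4/3$ in dimension two) enter precisely here, controlling the Sobolev embeddings and multiplication estimates underlying the Coulomb gauge construction. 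Once such a $g$ is produced, Lemma~\ref{restriction} supplies a smooth $h \in \G^\infty(P)$ with $r(h) = r(g)$; then $h^{-1}g \in \G^{k+1,p}_0(P)$, and $(h^{-1}g)\cdot A = h^{-1}\cdot(g\cdot A)$ remains smooth, providing the desired based smooth representative.

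The main obstacle is the smoothing step in (i): verifying that a Sobolev flat connection is gauge-equivalent, via a Sobolev gauge transformation, to a smooth one genuinely requires Uhlenbeck-type gauge fixing, and this is exactly where the technical hypotheses on $p$ are indispensable. The uniqueness direction, by contrast, is a routine bootstrap that would go through on any manifold once the Sobolev multiplication rule is in force.
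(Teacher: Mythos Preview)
Your proposal is correct and follows essentially the same route as the paper: existence via Wehrheim's regularity theorem plus the based-correction trick using Lemma~\ref{restriction}, and uniqueness via the elliptic bootstrap (which the paper outsources to \cite[Lemma~14.9]{A-B}). The only cosmetic difference is that you spell out the bootstrap explicitly and sketch the Coulomb-gauge mechanism behind Wehrheim's result, whereas the paper simply cites both as black boxes.
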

\begin{proof} By Wehrheim~\cite[Theorem 9.4]{Wehrheim}, the assumptions on $k$ and $p$ guarantee that each $\G^{k+1,p}(n)$ orbit in $\fc^{k,p}(n)$ contains a smooth connection.  Now, say $\phi \cdot A$ is smooth for some
$\phi\in \G^{k+1,p} (P)$.  
By Lemma~\ref{restriction}, there exists a smooth gauge transformation $\psi$ such that $r(\psi) = r(\phi)^{-1}$.  The composition $\psi \circ \phi$ is based, and since $\psi$ is smooth we know that 
$(\psi \circ \phi) \cdot A$ is smooth.  This proves existence.  For uniqueness, say $\phi\cdot A$ and $\psi\cdot A$ are both smooth, where $\phi, \psi\in \G^{k+1,p}_0 (P)$.  Then $\phi \psi^{-1}$ is smooth by~\cite[Lemma 14.9]{A-B}, so these connections lie in the same $\G^{\infty}_0$--orbit.
\end{proof}

The following elementary lemma provides some of the compactness we will need.

\begin{lemma}\label{finiteness}  If $G$ is a compact Lie group, then only finitely many isomorphism classes of principal $G$--bundles over $M$ admit flat connections.  \end{lemma}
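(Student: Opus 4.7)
The plan is to show that (i) every flat bundle arises as an ``associated bundle'' $\wt{M}\cross_\rho G$ for some $\rho \in \Hom(\pi_1(M), G)$, (ii) this representation space is compact, and (iii) the associated-bundle map is locally constant (in fact, constant on path components); a locally constant map out of a compact source has finite image.

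First, by Proposition~\ref{rep-flat} (the holonomy correspondence), every principal $G$-bundle $P\to M$ admitting a flat connection is isomorphic, as an unbased principal bundle, to the associated bundle $P_\rho := \wt{M}\cross_\rho G$ for some $\rho \in \Hom(\pi_1(M), G)$. So the task reduces to showing that the map
$$\Phi\co \Hom(\pi_1(M), G) \maps \{\text{isomorphism classes of principal $G$-bundles over $M$}\}, \quad \rho \mapsto [P_\rho],$$
has finite image. Since $M$ is a compact manifold it has the homotopy type of a finite CW complex, so $\pi_1(M)$ is finitely generated. Picking generators $g_1,\ldots, g_k$ realizes $\Hom(\pi_1(M), G)$ as a closed subspace of $G^k$, which is compact.

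Next, I would show $\Phi$ is constant on path components of $\Hom(\pi_1(M), G)$. Given a path $\rho_t$, the diagonal action $\gamma\cdot(x,g,t) = (\gamma\cdot x,\, \rho_t(\gamma)g,\, t)$ on $\wt{M}\cross G\cross [0,1]$ is free and properly discontinuous, and its quotient is a principal $G$-bundle over $M\cross [0,1]$ restricting to $P_{\rho_0}$ and $P_{\rho_1}$ at the two endpoints. Since bundles over a cylinder pull back from the base, $P_{\rho_0}\isom P_{\rho_1}$, and $\Phi$ factors through $\pi_0\Hom(\pi_1(M), G)$. To conclude, it suffices that $\Hom(\pi_1(M), G)$ has finitely many path components. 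Since $\pi_1(M)$ is finitely presented (again using compactness of $M$) and $G$ embeds as a closed subgroup of some $U(n)$, $\Hom(\pi_1(M), G)$ is a compact real semi-algebraic subset of $U(n)^k$, cut out by the polynomial relations of a presentation of $\pi_1(M)$ and the defining equations of $G$; such sets admit finite triangulations and therefore have finitely many path components.

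The main conceptual step is the path-constancy of $\Phi$, realized by collating the bundles $P_{\rho_t}$ into a single bundle over $M\cross I$. The technical input is the finiteness of components in a compact semi-algebraic set; one could alternatively bypass semi-algebraic geometry by using a functorial bar construction, since $\rho \mapsto B\rho$ defines a continuous map $\Hom(\pi_1(M), G) \maps \Map(B\pi_1(M), BG)$, and composing with a classifying map $M \maps B\pi_1(M)$ followed by the projection $\Map(M, BG) \maps \pi_0\Map(M, BG)$ shows $\Phi$ is already locally constant, after which compactness immediately yields finiteness.
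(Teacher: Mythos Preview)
Your argument is correct and follows essentially the same route as the paper: flat bundles come from representations, path-connected representations yield isomorphic bundles via a bundle over $M\times I$, and the representation space has finitely many components by real (semi-)algebraic geometry.

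One logical point: you invoke Proposition~\ref{rep-flat} for the fact that every flat bundle is an associated bundle $\wt{M}\times_\rho G$, but in the paper Proposition~\ref{rep-flat} \emph{uses} Lemma~\ref{finiteness} in its proof, so this citation is circular. The fact you actually need---surjectivity of the holonomy map onto $\Hom(\pi_1 M, G)$, and that a flat bundle is isomorphic to the mixed bundle of its holonomy representation---is established independently in the Appendix (Proposition~\ref{hol} and the discussion of $E_\rho$), and the paper's own proof of Lemma~\ref{finiteness} cites the Appendix rather than Proposition~\ref{rep-flat}. Swapping the reference fixes the issue. Your alternative via the continuous map $\rho\mapsto B\rho$ into $\Map(M,BG)$ is a nice way to sidestep the algebraic geometry, though one should check that the functorial classifying-space model being used makes this map genuinely continuous.
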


\begin{proof} 
As described in the Appendix, any bundle $E$ admitting a flat connection $A$ is isomorphic to the bundle induced by holonomy representation $\rho_A\co \pi_1 M \to G$.  If two bundles $E_0$ and $E_1$ arise from representations $\rho_0$ and $\rho_1$ in the same path component of $\Hom(\pi_1 M, G)$, then choose a path $\rho_t$ of representations connecting $\rho_0$ to $\rho_1$.  The bundle
$$E = (\wt{M}\cross [0,1] \cross G) \Big/ (\wt{m}, t, g) \sim (\wt{m}\cdot \gamma, t, \rho_t(\gamma)^{-1}g)$$
 is a principal $G$--bundle over $M \cross [0,1]$ and provides a bundle homotopy between $E_0$ and $E_1$;  by the Bundle Homotopy Theorem we conclude $E_0\isom E_1$.  Hence the number of isomorphism classes admitting flat connections is at most the number of path components of $\Hom(\pi_1 M, G)$.
 
Now recall that any compact Lie group is in fact algebraic: its image under a faithful representation $\rho\co G\to \mathrm{GL}_n \bbC$ is Zariski closed (see Sepanski~\cite[Excercise 7.35, p. 186]{Sepanski} for a short proof using the Stone-Weierstrass Theorem and Haar measure).  Since $\pi_1 M$ is finitely generated (by $k$ elements, say), $\Hom(\pi_1 M, G)$ is the subvariety of $G^k$ cut out by the relations in $\pi_1 M$.  So this space is a real algebraic variety as well, hence triangulable (see Hironaka~\cite{Hironaka}).  Since compact CW complexes have finitely many path components, the proof is complete.
\end{proof}

\begin{remark} We note that the previous lemma also holds for non-compact algebraic Lie groups, by a result of Whitney~\cite{Whitney} regarding components of varieties.
\end{remark}

We can now prove the result connecting representations to Yang--Mills theory.

\begin{proposition}$\label{rep-flat}$ Assume $p> n/2$ (and if $n=2$, assume $p > 4/3$), $k\geqs 1$, and $kp>n$.  Then for any $n$--manifold $M$ and any compact, connected Lie group $G$, the holonomy map induces a $G$--equivariant homeomorphism
$$\coprod_{[P_i]} \flatc^{k,p}(P_i)/\G^{k+1,p}_0 (P_i) \stackrel{\overline{\mH}}{\maps} \Hom(\pi_1(M), G),$$
where the disjoint union is taken over some set of representatives for the (unbased) isomorphism classes of principal $G$--bundles over $M$.  (Note that to define $\overline{\mH}$ we choose, arbitrarily, a base point in each representative bundle $P_i$.)

The $G$--action on the left is induced by the actions of $\G^{k+1,p}(P_i)$ together with the homeomorphisms $\G^{k+1,p}(P_i)/\G^{k+1,p}_0(P_i) \homeo G$, which again depends on the chosen basepoints in the bundles $P_i$.
\end{proposition}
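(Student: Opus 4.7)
The plan is to verify well-definedness, continuity, bijectivity, $G$-equivariance, and continuity of the inverse (the main technical point), drawing on the lemmas developed in this section. Since based gauge transformations fix $p_0$ and carry $A$-horizontal lifts of loops based at $m_0$ to $\phi\cdot A$-horizontal lifts with the same endpoints, $\mH$ is constant on $\G^{k+1,p}_0(P_i)$-orbits. Combined with Lemma~\ref{holonomy-cont} and the universal property of the quotient topology, this shows the induced map $\overline{\mH}$ on each orbit space is well-defined and continuous, and hence so is the map from the disjoint union.

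For bijectivity: given $\rho$, the Appendix provides the bundle $P_\rho = \wt{M}\cross_\rho G$ equipped with a canonical flat connection whose holonomy at its natural basepoint is $\rho$; by Lemma~\ref{finiteness} this bundle is isomorphic to a unique $P_i$, and transporting the flat connection along any such isomorphism yields a flat connection on $P_i$ whose holonomy relative to $p_0$ differs from $\rho$ by conjugation by some $g\in G$. This is corrected by acting with any $\phi\in \G^{k+1,p}(P_i)$ with $r(\phi) = g^{-1}$, which exists by surjectivity of $r$ in Lemma~\ref{restriction}. For injectivity, Lemma~\ref{orbits} reduces us to the case of smooth flat connections $A_1,A_2\in \flatc^{\infty}(P_i)$ with common holonomy; one then builds an explicit smooth based gauge transformation $\phi$ by parallel transport, setting $\phi(\wt\gamma_1(1)) = \wt\gamma_2(1)$, where $\wt\gamma_j$ is the $A_j$-horizontal lift from $p_0$ of a smoothly chosen path $\gamma$ from $m_0$ to $x$, and extending $G$-equivariantly along the fibre. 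Equality of holonomies makes $\phi$ independent of $\gamma$, and smooth dependence of horizontal lifts on initial data (cf.\ Lang~\cite{Lang-dg}) yields smoothness of $\phi$.

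Equivariance is a direct calculation with horizontal lifts: if $r(\phi)=g$, then the $\phi\cdot A$-horizontal lift of $\gamma$ starting at $p_0$ is the right translate $(\phi\circ\wt\gamma)\cdot g^{-1}$, and the convention $\wt\gamma(1)\cdot \rho_A(\gamma) = p_0$ then gives $\rho_{\phi\cdot A}(\gamma) = g\rho_A(\gamma)g^{-1}$, which matches the conjugation action of $G$ on $\Hom(\pi_1 M,G)$.

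The main obstacle is continuity of $\overline{\mH}^{-1}$. The plan is to construct continuous local sections of $\overline{\mH}$ by perturbing a reference flat connection. Near $\rho_0$, choose a reference flat connection $A_0$ on the appropriate $P_i$ and select smooth based loops $\gamma_1,\ldots,\gamma_k$ generating $\pi_1 M$, embedded so as to meet only at $m_0$; for $\rho$ close to $\rho_0$, modify $A_0$ inside disjoint tubular neighborhoods of the $\gamma_i$ by compactly supported $\mathrm{ad}\,P_i$-valued $1$-forms depending continuously on $\rho$, so that the parallel transport around each $\gamma_i$ becomes $\rho(\gamma_i)$. Because the modifications live in contractible tubes disjoint from one another and $\rho$ satisfies the relations in $\pi_1 M$ by hypothesis, the perturbed connection remains flat. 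This yields a continuous local section, and the collection of such sections patches (using the continuous $G$-action to match basepoint conventions) into a global continuous inverse to $\overline{\mH}$.
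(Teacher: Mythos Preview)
Your treatment of well-definedness, continuity, bijectivity, and $G$-equivariance is essentially the same as the paper's (which defers the bijectivity to the Appendix and invokes Lemmas~\ref{holonomy-cont} and~\ref{orbits} exactly as you do). The substantive divergence is in the continuity of $\overline{\mH}^{-1}$, and here your argument has a genuine gap.

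The problem is the sentence ``the perturbed connection remains flat.'' Flatness is a \emph{local} condition ($F(A)=0$ pointwise), whereas the constraint you are imposing---that the holonomies around the generating loops $\gamma_i$ equal $\rho(\gamma_i)$ and that $\rho$ satisfies the relations of $\pi_1 M$---is a \emph{global} one. Adding a compactly supported $\mathrm{ad}\,P$-valued $1$-form $a$ to a flat connection $A_0$ produces curvature $da + \tfrac{1}{2}[a,a]$ inside each tube, and nothing in your setup forces this to vanish: there are plenty of connections on a tubular neighbourhood $S^1\times D^{n-1}$ with prescribed holonomy around the core circle but nonzero curvature. The fact that the tubes are contractible is irrelevant (contractible manifolds carry non-flat connections), and the relation $\rho$ satisfies in $\pi_1 M$ only constrains holonomy around a single homotopy class of loop, not around the small contractible loops that detect curvature. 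To make a perturbation argument work you would need to construct, continuously in $\rho$, genuinely flat connections on the tubes that match $A_0$ on the boundary; this is not obviously possible and in any case is far more delicate than what you have written.

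The paper sidesteps this entirely by a compactness argument. Using the Strong Uhlenbeck Compactness Theorem (which gives sequential compactness of $\flatc^{k,p}(P)/\G^{k+1,p}(P)$), together with compactness of $G$ and Lemma~\ref{restriction}, one shows that each $\flatc^{k,p}(P_i)/\G^{k+1,p}_0(P_i)$ is sequentially compact. By Lemma~\ref{finiteness} only finitely many $P_i$ contribute, so the whole disjoint union is sequentially compact, and a continuous bijection from a sequentially compact space to a Hausdorff space is automatically a homeomorphism. This buys you the inverse continuity for free, without any local construction.
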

\begin{proof}  The assumptions on $k$ and $p$ allow us to employ all previous results in this section (note Remark~\ref{better-cont}).
It is well-known that the holonomy map 
$$H\co\coprod_{[P_i]} \flatc^{\infty} (P_i) \maps \Hom(\pi_1 (M), G)$$
is invariant under the action of the based gauge group and induces an equivariant bijection
$$\bar{H}\co \coprod_{[P_i]} \flatc^{\infty} (P_i)/\G^{\infty}_0 (P_i) \maps \Hom(\pi_1(M), G).$$
For completeness we have included a proof of this result in the Appendix.
By Lemma~\ref{orbits}, the left hand side is unchanged (set-theoretically) if we replace
$\flatc^{\infty}$ and $\G^{\infty}_0$ by $\flatc^{k,p}$ and $\G^{k+1,p}_0$, and hence Lemma~\ref{holonomy-cont}
tells us that we have a continuous equivariant bijection
$$\bar{\mH}\co \coprod_{[P_i]} \flatc^{k,p} (P_i)/\G^{k+1,p}_0 (P_i) \maps \Hom(\pi_1(M), G).$$

We will show that for each $P$, $\flatc^{k,p}(P)/\G^{k+1,p}_0(P)$ is sequentially compact.  Since, by Lemma~\ref{finiteness}, 
 only finitely many isomorphism types of principal $G$--bundle admit flat connections, this implies that
$$\coprod_{[P_i]} \flatc^{k,p}(P_i)/\G^{k+1,p}_0 (P_i)$$
is sequentially compact.
A continuous bijection from a sequentially compact space to a Hausdorff space is a homeomorphism, so this will complete the proof.   

The Strong Uhlenbeck Compactness Theorem~\cite{Wehrheim} (see also Daskalopoulos~\cite[Proposition 4.1]{Dask}) states that the space $\flatc^{k,p}(P)/\G^{k+1,p}(P)$ is sequentially compact.  Given a sequence $\{A_i\}$ in $\flatc^{k,p}(P)$,  there exists a sub-sequence $\{A_{i_j}\}$ and a sequence $\phi_j\in \G^{k+1,p}(n)$ such that $\phi_j\cdot A_{i_j}$ converges in $\A^{k,p}$ to a flat connection $A$.  Let $g_j = r(\phi_j)$.  Since $G$ is compact, passing to a sub-sequence if necessary we may assume that the $g_j$ converge to some $g\in G$.
The proof of Lemma~\ref{restriction} shows that we may choose a convergent sequence $\psi_j\in\G^{k+1,p} (P)$ with $r(\psi_j) = g_j^{-1}$; we let $\psi = \lim \psi_j$, so $r(\psi) = g^{-1}$.
By continuity of the action, the sequence $(\psi_j \circ \phi_j) \cdot A_{i_j}$ converges to 
$\psi\cdot A$.  Since $\psi_j \circ \phi_j \in \G^{k+1,p}_0(P)$, this completes the proof. 
\end{proof}

\begin{remark} Since $\Hom(\pi_1 M, G)$ is compact, Proposition~\ref{rep-flat} implies compactness of $\fc^{k,p}(P)/\G^{k+1,p}_0 (P)$.  
However, point-set considerations alone show that sequential compactness of $\fc^{k,p}(P)/\G^{k+1,p}_0 (P)$ suffices to prove its compactness: specifically, $\fc^{k,p}(P)$ is second countable, since it is a subspace of a separable Banach space.  The quotient map of a group action is open, so $\fc^{k,p}(P)/\G^{k+1,p}_0 (P)$ is second countable as well.  Now, any second countably, sequentially compact space is compact.  (The necessary point-set topology can be found in Wilansky~\cite[5.3.2, 7.3.1, 5.4.1]{Wilansky}.)

More interesting is that Proposition~\ref{rep-flat} implies that the based gauge orbits in $\fc^{k,p}(P)$ are closed (the quotient embeds in $\Hom(\pi_1 M, G)$).  Since $G$ is compact, one also concludes that the full gauge orbits are closed.
\end{remark}


\newpage

\section{The
Harder-Narasimhan stratification and Morse theory for the Yang--Mills Functional }$\label{Harder-Narasimhan}$

In Section~\ref{rep-flat-section}, we explained how to pass from spaces of representations to spaces of flat connections.  We now focus on the case where $M$ is a compact surface and $G= U(n)$. 
We wish to compare the space of flat connections on a $U(n)$--bundle $P$ over $M$ to the \emph{contractible} space of all connections on $P$, and in particular we want to understand what happens as the rank of $P$ tends to infinity.  Atiyah and Bott made such a comparison (for a fixed bundle $P$) using computations in equivariant cohomology.  We will work directly with homotopy groups, using Smale's infinite dimensional transversality theorem.
A (co)homological approach could be used in the orientable case (details may be found in~\cite{Ramras-YM}), but there are difficulties, related to equivariant Thom isomorphisms, in extending such an argument to the non-orientable case.  These issues are the focus of ongoing work with Ho and Liu. 

The main result of this section is the following connectivity estimate.

\medskip
\noindent{\bf Proposition \ref{connectivity}}\quad
{\sl Let $M = M^g$ denote a compact Riemann surface of genus $g$, and let $n>1$ be an integer.
Then the space $\flatc^k(n)$ of flat connections on a trivial rank $n$ bundle over $M$ is $2g(n-1)$--connected, and if $\Sigma$ is a non-orientable surface with oriented double cover $M^g$, then the space of flat connections on any principal $U(n)$--bundle over $\Sigma$ is $(g(n-1) -1)$--connected.
}
\medskip

In the orientable case this result is in fact sharp; in the non-orientable case it can be improved significantly~\cite{Ramras-YM}.

We will work in the Hilbert space of $L^2_k$ connections, and we assume $k\geqs 2$ so that the results of Section~\ref{rep-flat-sec} apply.  We suppress $p=2$ from the notation, writing simply $\A^k$, $\G^k$, and so on.  Over a Riemann surface, any principal $U(n)$--bundle admitting a flat connection is trivial (see Corollary~\ref{rep-ctd}), and hence we restrict our attention to the case $P = M\cross U(n)$ and write $\A^k(n) = \A^k(M\cross U(n))$, etc.

For any smooth principal $U(n)$--bundle $P\to M$, the Yang--Mills functional 
$$L\co\A^k(P)\to \bbR$$
is defined by the formula
$$L(A) = \int_M ||F(A)||^2 d\mathrm{vol}$$
where $F(A)$ denotes the curvature form of the connection $A$ and the volume of $M$ is normalized to be $1$.  The space $\fc^k(n)$ of flat connections forms a critical set for the $L$~\cite[Proposition 4.6]{A-B}, and so one hopes to employ Morse-theoretic ideas to compare the topology of this critical set to the topology of $\A^k(P)$.  In particular, the gradient flow of $L$ should allow one to define stable manifolds associated to critical sets of $M$, which should deformation retract to those critical subsets.
The necessary analytical work has been done by Daskalopoulos~\cite{Dask} and R{\aa}de~\cite{Rade}, and furthermore Daskalopoulos has explicitly identified the Morse stratification of $\A^k(P)$ (proving a conjecture of Atiyah and Bott).  We now explain this situation.  

We now recall (see~\cite[Sections 5, 7]{A-B}) that there is a bijective correspondence between connections on a principal $U(n)$--bundle $P$ and Hermitian connections on the associated Hermitian vector bundle $E = P\cross_{U(n)} \bbC^n$.  
When the base manifold is a Riemann surface, the latter space may in fact be viewed as the space of holomorphic structures on $E$: first, there is a bijective correspondence between holomorphic structures and $(0,1)$--connection $\bar{\partial}\co \Omega^0_{M}(E)\to \Omega^{0,1}_M (E)$, provided by the fact that each $(0,1)$--connection on a complex curve is integrable (see Donaldson--Kronheimer~\cite[Section 2.2.2]{DK}), together with the fact that a holomorphic structure is determined by its sheaf of holomorphic sections (sections $s\co M\to E$ with $\bar{\partial} s = 0$).  Now, each Hermitian connection on $E$ has an associated $(0,1)$--connection, and by Griffiths--Harris~\cite[p. 73]{Griffiths-Harris} this in fact induces an isomorphism of affine spaces.  

From now on we will view holomorphic structures in terms of their associated $(0,1)$--connections.
Since the space of $(0,1)$--connections is an affine space modeled the vector space of (smooth) sections of the vector bundle $\Omega_M^{0,1} (\mathrm{End} (E))$ of endomorphism-valued $(0,1)$--forms, we may define Sobolev spaces $\mcC^k(E) = \mcC^{k,2}(E)$ of $(0,1)$--connections simply by taking $L^2_k$--sections of this bundle. When $M$ is a Riemann surface, the above isomorphism of affine spaces extends to an isomorphism $\A^k(P) \isom \mcC^k(P\cross_{U(n)} \bbC^n)$.

There is a natural algebraic stratification of $\mcC^k(E)$ called the Harder-Narasimhan stratification, which turns out to agree with the Morse stratification of $\A^k (P)$.  We now describe this stratification in the case $E = M\cross \bbC^n$.

\begin{definition} Let $E$ be a holomorphic bundle over $M$.  Let $\mathrm{deg}(E)$ denote its first Chern number and let $\mathrm{rk} (E)$ denote its dimension.  We call $E$ semi-stable if for every proper holomorphic sub-bundle $E'\subset E$, one has 
$$\frac{\mathrm{deg}(E')}{\mathrm{rk} (E')} \leqs \frac{\mathrm{deg}(E)}{\mathrm{rk}(E)}.$$
Replacing the $\leqs$ by $<$ in this definition, one has the definition of a \emph{stable} bundle.
\end{definition}

Given a holomorphic structure $\E$ on the bundle $M\cross \bbC^n$, there is a unique filtration (the Harder-Narasimhan filtration~\cite{H-N})
$$0 = \E_0 \subset \E_1 \subset \cdots \E_r = \E$$
of $\E$ by holomorphic sub-bundles with the property that each quotient $\mcD_i = \E_i/\E_{i-1}$ is semi-stable ($i = 1,\ldots ,r$) and $\mu(\mcD_1) > \mu(\mcD_2) > \cdots > \mu(\mcD_r)$, where 
$\mu(\mcD_i) = \frac{\mathrm{deg} (\mcD_i)}{\mathrm{rank} (\mcD_i)}$, and $\mathrm{deg} (\mcD_i)$ is the first Chern number of the vector bundle $\mcD_i$.
Letting $n_i =$ rank$(D_i)$ and $k_i =$ deg$(D_i)$, we call the sequence 
$$\mu = ((n_1, k_1), \ldots, (n_r, k_r))$$
the \emph{type} of $\E$.  Since ranks and degrees add in exact sequences, we have $\sum_i n_i = n$ and $\sum_i k_i = 0$.
By~\cite[Section 14]{A-B}, each orbit of the \emph{complex} gauge group on $\mcC^k(E)$ contains a unique isomorphism type of smooth $(0,1)$--connections (i.e. holomorphic structures), so we may define $\mcC^k_{\mu} = \mcC^k_{\mu}(n) \subset \mcC^k(n)$ to be the subspace of all $(0,1)$--connections gauge-equivalent to a smooth connection of type $\mu$, and the $\mcC^k_{\mu}$ partition $\mcC^k(n)$.
Note that the semi-stable stratum corresponds to $\mu = ((n,0))$.

It is a basic fact that every flat connection on $E$ corresponds to a semi-stable bundle: the Narasimhan-Seshardri Theorem~\cite[(8.1)]{A-B} says that irreducible representations induce \emph{stable} bundles.
By Proposition~\ref{rep-flat}, every flat connection comes from some unitary representation, which is a sum of irreducible representations, and hence the holomorphic bundle associated to any representation, i.e. any flat connection, is a sum of stable bundles.  Finally, an extension of stable bundles of the same degree is always semi-stable.  

We can now state the result we will need.

\begin{theorem}[Daskalopoulos, Rade]$\label{retraction}$ Let $M$ be a compact Riemann surface.  
Then the gradient flow of the Yang--Mills functional is well-defined for all positive time, and defines continuous deformation retractions from the Harder-Narasimhan strata $\mcC^k_\mu$ to their critical subsets.  Moreover, these strata are locally closed submanifolds of $\mcC^k(n)$ of complex codimension
$$c(\mu) = \left( \sum_{i>j} n_i k_j - n_j k_i \right) + (g-1) \left( \sum_{i>j} n_i n_j  \right).$$

In particular, there is a continuous deformation retraction (defined by the gradient flow of $L$) from the space $\Css^k(n)$ of all semi-stable $L^2_k$ $(0,1)$--connections on $M\cross \bbC^n$ to the subspace $\flatc^k(n)$ of flat (unitary) connections.
\end{theorem}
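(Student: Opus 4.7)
The plan is to assemble the theorem from the analytical work of R\aa de on the Yang--Mills heat flow and the more algebraic work of Daskalopoulos identifying the Morse-theoretic and algebraic stratifications. I do not aim to reprove either result; rather, the task is to explain how they combine to yield the stated retraction and codimension formula, and to verify the last sentence about the semi-stable stratum, since that is the version actually used later in the paper.

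First I would invoke R\aa de's theorem: for $k\geqs 1$, the negative $L^2$ gradient flow of the Yang--Mills functional $L$ on $\A^k(P)$ has a unique solution $A(t)$ defined for all $t\geqs 0$ and any initial condition, and $A(t)$ converges in $\A^k(P)$ as $t\to\infty$ to a Yang--Mills connection. The two-dimensional setting is essential here: the $L^2$-norm of the curvature is monotone along the flow, and the conformal invariance of $L$ in dimension two together with Uhlenbeck compactness prevent bubbling. This produces a continuous retraction $r\co\mcC^k(n)\to\mcC^k(n)$ onto the critical set of $L$, and a continuous deformation $H_t\co A\mapsto A(t)$ from the identity to $r$.

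Second, I would invoke Daskalopoulos' theorem identifying the flow-theoretic stratification of $\mcC^k(n)$ (strata indexed by the gauge equivalence class of the limit of the flow) with the Harder--Narasimhan stratification $\{\mcC^k_\mu\}$. The key point is that the flow preserves each stratum $\mcC^k_\mu$ and deformation retracts it onto its Yang--Mills critical subset; combined with R\aa de's convergence this yields the claimed retractions. For the codimension formula I would cite Atiyah--Bott's equivariant Morse calculation of the expected complex codimension of $\mcC^k_\mu$, now known to be the actual codimension by Daskalopoulos' proof that each $\mcC^k_\mu$ is a locally closed submanifold.

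Finally, I would specialize to $\mu=((n,0))$ to obtain the last assertion. On the semi-stable stratum a Yang--Mills critical connection corresponds (via the bijection between Hermitian connections and $(0,1)$-connections on $M\cross\bbC^n$) to a holomorphic structure that splits as a direct sum of stable sub-bundles of equal slope; since the total degree is zero this slope must be zero, so by Narasimhan--Seshadri each summand arises from an irreducible unitary representation. Hence the corresponding unitary connection is flat, giving $\Css^k(n)\searrow\flatc^k(n)$. The main obstacle, were one writing this from scratch, would be the global existence and long-time $L^2_k$ convergence of the Yang--Mills flow; the algebraic identification of the strata and the reduction in the semi-stable case are comparatively formal once R\aa de's analytical results and Lemma~\ref{orbits} are in hand.
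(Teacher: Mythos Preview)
Your proposal is correct and aligns with the paper's treatment: the paper does not prove this theorem but states it as a result of Daskalopoulos and R{\aa}de, with the remark following the statement attributing the submanifold and stratification assertions to Daskalopoulos and the full flow convergence to R{\aa}de, exactly as you have outlined. Your additional verification that the critical set of the semi-stable stratum consists precisely of the flat connections (via Narasimhan--Seshadri and the slope-zero observation) is a helpful elaboration that the paper leaves implicit.
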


\begin{remark} This result holds for any $C^{\infty}$ vector bundle.  Daskalopoulos proved convergence of the Yang--Mills flow modulo gauge transformations, and established continuity in the limit on (the gauge quotient of) each Harder-Narasimhan stratum (which he proved to be submanifolds).  R{\aa}de later proved the full convergence result stated above.  We will discuss the analogue of this situation in the non-orientable case in the proof of Proposition~\ref{connectivity}.  One could ask for a result slightly stronger than Theorem~\ref{retraction}: since the gradient flow of $L$ converges at time $+\infty$ to give a continuous retraction from each $\mcC^k_\mu$ to its critical set, this stratum is a disjoint union of Morse strata.  However, I do not know in general whether $\mcC^k_\mu$ is connected.  Hence the Morse stratification may be finer than the Harder-Narasimhan stratification.
\end{remark}

The following definition will be useful.

\begin{definition}$\label{admissible}$ Consider a sequence of pairs of integers 
$((n_1, k_1), \ldots, (n_r, k_r))$.  We call such a sequence \emph{admissible} of total rank $n$ (and total Chern class $0$) if 
$n_i > 0$ for each $i$, $\sum n_i = n$, $\sum_i k_i = 0$, and $\frac{k_1}{n_1}>\cdots>\frac{k_r}{n_r}$.    Hence admissible sequences of total rank $n$ and total Chern class $0$ are precisely those describing Harder-Narasimhan strata in $\mcC(n)$.
We denote the collection of all admissible sequences of total rank $n$ and total Chern class $0$ by $\mathcal{I}(n)$.  
\end{definition}

We now compute the minimum codimension of a non semi-stable stratum.  In particular, this computation shows that this minimum tends to infinity with $n$, so long as the genus $g$ is positive.  This result has been extended to the case of non-trivial bundles in~\cite{Ramras-YM}.

\begin{lemma}$\label{codim}$ The minimum (real) codimension of a non semi-stable stratum in $\mcC^k(n)$ ($n>1$) is precisely $2n + 2(n-1)(g-1) = 2g(n-1) +2$.
\end{lemma}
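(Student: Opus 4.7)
The plan is to verify the claimed minimum codimension by exhibiting an admissible type that realizes it and then bounding $c(\mu)$ from below uniformly over all admissible $\mu$ with $r \geq 2$. Writing $c(\mu) = T_1(\mu) + T_2(\mu)$ with $T_1 = \sum_{i>j}(n_i k_j - n_j k_i)$ and $T_2 = (g-1)\sum_{i>j} n_i n_j$, I would first observe that $\mu_* = ((n-1,1),(1,-1))$ is admissible (the slopes $1/(n-1) > -1$ are strictly decreasing, and the degrees sum to zero) and that direct substitution gives $T_1(\mu_*) = n$ and $T_2(\mu_*) = (g-1)(n-1)$, so the real codimension of $\mcC^k_{\mu_*}$ is exactly $2n + 2(n-1)(g-1) = 2g(n-1)+2$. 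It remains to show no other admissible $\mu$ beats this.

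For the lower bound on $T_1$ I would introduce the Harder--Narasimhan polygon $P_\mu$ with integer lattice vertices $(N_l,K_l) := (\sum_{i\leq l} n_i,\, \sum_{i\leq l} k_i)$ for $l = 0,1,\ldots,r$; by construction $(N_0,K_0) = (0,0)$ and $(N_r,K_r) = (n,0)$. Because the edge slopes $\mu_1 > \cdots > \mu_r$ are strictly decreasing with weighted average zero, $P_\mu$ is concave and lies above the $N$-axis, so its highest vertex $(N_*,K_*)$ satisfies $K_* \geq 1$. A straightforward expansion (rewriting $k_j = K_j - K_{j-1}$ and $n_j = N_j - N_{j-1}$ and telescoping) yields $T_1(\mu) = \sum_{l=1}^{r}(N_l K_{l-1} - N_{l-1} K_l)$, which is twice the unsigned area of $P_\mu$ by the shoelace formula. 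By concavity $P_\mu$ contains the triangle with vertices $(0,0),(N_*,K_*),(n,0)$, whose area is $\tfrac{1}{2} n K_* \geq n/2$, so $T_1(\mu) \geq n$.

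For $T_2$ I would use the identity $\sum_{i>j} n_i n_j = \tfrac{1}{2}(n^2 - \sum_i n_i^2)$. Subject to $n_i \geq 1$, $\sum_i n_i = n$, and $r \geq 2$, the value $\sum_i n_i^2$ is maximized at the partition $(n-1,1)$, which gives $\sum_{i>j} n_i n_j \geq n-1$ and hence $T_2(\mu) \geq (g-1)(n-1)$ (the case of interest being $g \geq 1$). Adding the two bounds yields $c(\mu) \geq n + (n-1)(g-1)$ for every non semi-stable $\mu$, with equality attained at $\mu_*$, so the minimum real codimension is precisely $2g(n-1) + 2$.

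The only delicate point is the polygon interpretation in the second paragraph, which relies on the shoelace identity $T_1(\mu) = 2\,\mathrm{Area}(P_\mu)$ and the elementary fact that a concave polygon above its chord contains the triangle joining the chord's endpoints to its highest vertex; everything else is routine arithmetic. A completely combinatorial alternative would split into the cases $r = 2$ (where $k_2 = -k_1$ and admissibility force $T_1 = k_1 n \geq n$) and $r \geq 3$ (where a lattice-point argument gives $T_1 \geq n+1$), but the polygonal framing handles both cases uniformly.
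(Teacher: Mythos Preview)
Your proof is correct, and both halves take a different route from the paper's. For the bound $T_1(\mu)\geqs n$, the paper argues combinatorially: it locates an index $l_0$ at which the $k_l$ change sign (using that the slopes are strictly decreasing with zero average), bounds the individual terms $n_i k_j - n_j k_i$ from below according to the position of $i,j$ relative to $l_0$, and then checks that every $n_l$ appears in the resulting sum. Your argument instead interprets $T_1$ as twice the area under the Harder--Narasimhan polygon via the shoelace formula (the telescoping identity $N_l K_{l-1}-N_{l-1}K_l=\sum_{j<l}(n_l k_j - n_j k_l)$ justifies this cleanly), then uses concavity and integrality of the apex to bound the area by $n/2$. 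This is more geometric and conceptually tidier; the paper's argument is more hands-on but avoids invoking the polygon picture. For the bound $\sum_{i>j} n_i n_j \geqs n-1$, the paper runs an exchange argument (moving mass between two parts strictly larger than $1$ decreases the sum), eventually reducing to the partition $(1,\ldots,1,n-r+1)$ and then minimizing over $r$. You instead use the identity $\sum_{i>j} n_i n_j = \tfrac{1}{2}\bigl(n^2-\sum_i n_i^2\bigr)$ and maximize $\sum_i n_i^2$ directly, which is shorter. Both arguments implicitly need $g\geqs 1$ so that the $T_2$ inequality points the right way; you flag this, and the paper's subsequent remark about genus~$0$ confirms this is the intended regime. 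Your minimizing type $\mu_*=((n-1,1),(1,-1))$ differs from the paper's $((1,1),(n-1,-1))$, but both are admissible and give the same codimension.
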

\begin{proof}  Let $\mu = ((n_1, k_1), \ldots, (n_r, k_r)) \in \I(n)$ be any admissible sequence with $r>1$.
Then from Theorem~\ref{retraction}, we see that it will suffice to show that
\begin{equation}\label{term1} 
\sum_{i>j} n_i k_j - n_j k_i \geqs n
\end{equation}
and
\begin{equation}\label{term2} 
\sum_{i>j} n_i n_j \geqs n-1.
\end{equation}

To prove (\ref{term1}), we begin by noting that since $\sum k_i = 0$ and the ratios $\frac{k_i}{n_i}$ are strictly decreasing, we must have $k_1>0$ and $k_n<0$.  Moreover, there is some $l_0\in \bbR$ such that $k_l \geqs 1$ for $l<l_0$ and $k_l\leqs -1$ for $l>l_0$.  We allow $l_0$ to be an integer if and only if $k_l = 0$ for some $l$; then this integer $l$ is unique, and in this case we define $l_0 := l$.  Since $r\geqs 2$, we know that $1<l_0<r$.

Now, if $i>l_0>j$ we have $k_j\geqs 1$ and $k_i\leqs -1$, so, 
$$n_ik_j - n_j k_i \geqs n_i + n_j.$$
If $i>l_0$ and $j=l_0$, we have $k_j = 0$ and $k_i \leqs -1$, so
$$n_i k_{l_0} - n_{l_0} k_i \geqs 0 + n_{l_0} = n_{l_0}.$$
Finally, if $i = l_0$ and $j<l_0$, then $k_i= 0$ and $k_j \geqs 1$ so we have
$$n_{l_0} k_j - n_j k_{l_0} \geqs n_{l_0} - 0 = n_{l_0}.$$

Now, since $n_i k_j - n_j k_i = n_in_j (k_j/n_j - k_i/n_i)$ and the $k_l/n_l$ are strictly decreasing, we know that each term in the sum $\sum_{i>j} n_i k_j - n_j k_i$ is positive.  Dropping terms and applying the above bounds gives
$$\sum_{i>j} n_i k_j - n_j k_i 
\geqs \sum_{i>l_0>j} (n_i k_j - n_j k_i)
	+ \sum_{ l_0>j} (n_{l_0} k_j - n_j k_{l_0})
	+ \sum_{i>l_0} (n_i k_{l_0} - n_{l_0} k_i)$$
$$\geqs \sum_{i>l_0>j} (n_i + n_j) + \sum_{l_0>j} n_{l_0} + \sum_{i> l_0} n_{l_0}.$$
(In the second and third expressions, the latter sums are taken to be empty if $l_0$ is not an integer.)
Since $\sum n_i = n$, to check that the above expression is at least $n$ it suffices to check that each $n_i$ appears in the final sum.  But since $1<l_0<r$, each $n_l$ with $l\neq l_0$ appears in the first term, and if $l_0\in \bbN$ then $n_{l_0}$ appears in both of the latter terms.
This completes the proof of (\ref{term1}).

To prove (\ref{term2}), we fix $r\in \bbN$ ($r\geqs 2$) and consider partitions $\vect{p} = (p_1, \ldots, p_r)$ of $n$.  We will minimize the function $\phi_r(\vect{p}) = \sum_{i>j} p_i p_j$, over all length $r$ partitions of $n$. 

Consider a partition $\vect{p} = (p_1, \ldots, p_r)$ with $p_m \geqs p_l >1$ ($l\neq m$), and define another partition $\vect{p'}$ by setting
$$p_i' = \left\{ \begin{array}{ll}
					p_i, &  i\neq l, \,\, m,\\
					p_l  - 1, & i = l, \\
					p_m + 1 & i = m.
				      \end{array}
					\right.
$$  
It is easily checked that $\phi_r (\vect{p}) > \phi_r (\vect{p'})$.

Now, if we start with any partition $\vect{p}$ such $p_i >1$  for more than one index $i$, the above argument shows that $\vect{p}$ cannot minimize $\phi_r$.  Thus $\phi_r$ is minimized by the partition $\vect{p_0} = (1, \ldots, 1, n-r-1)$, and $\phi_r (\vect{p_0}) = \binom{r-1}{2} + (r-1)(n-r-1)$.
The latter is an increasing function for $r\in(0,n)$ and hence $\sum_{i>j} p_i p_j$ is minimized by the partition $(1, n-1)$.  This completes the proof of (\ref{term2}).

Finally, note that the sequence $((1,1), (n-1, -1))$ has complex codimension $n + (n-1)(g-1)$. 
\end{proof}

\begin{remark} It is interesting to note that the results in the next section clearly fail in the case when $M$ has genus 0.  From the point of view of homotopy theory, the problem is that $S^2$ is not the classifying space of its fundamental group, and so one should not expect a relationship between $K$--theory of $S^2$ and representations of $\pi_1 S^2 = 0$.  But the only place where our argument breaks down is the previous lemma, which tells us that there are strata of complex codimension 1 in the Harder-Narasimhan stratification of 
$\mcC^k(S^2\cross \bbC(n))$, and in particular the minimum codimension does not tend to infinity with the rank. 
\end{remark}

The main result of this section will be an application of the following infinite-dimensional transversality theorem, due to Smale~\cite[Theorem 19.1]{Abraham-Robbin} (see also~\cite{Abraham-Smale}).  Recall that a residual set in a topological space is a countable intersection of open, dense sets.  By the Baire category theorem, any residual subset of a Banach space is dense, and since any Banach manifold is \emph{locally} a Banach space, any residual subset of a Banach manifold is dense as well.

\begin{theorem}[Smale]$\label{transversality}$
Let $A$, $X$, and $Y$ be second countable $C^r$ Banach manifolds, with $X$ of finite dimension $k$.  Let $W\subset Y$ be a (locally closed) submanifold of $Y$, of finite codimension $q$.
Assume that $r > \max (0, k-q)$.
Let
$\rho\co A\to C^r(X,Y)$ be a $C^r$--representation, that is, a function for which the evaluation map 
$\mathrm{ev}_{\rho}\co A\cross X \to Y$ given by $\mathrm{ev}_{\rho} (a, x) = \rho(a) x$ is of class $C^r$.

For $a\in A$, let $\rho_a \co X\to Y$ be the map $\rho_a (x) = \rho(a) x$.  Then 
$\{a\in A | \rho_a \pitchfork W\}$ is residual in $A$, provided that $\mathrm{ev}_{\rho} \pitchfork W$.
\end{theorem}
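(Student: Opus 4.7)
The plan is to derive Theorem~\ref{transversality} from the Sard--Smale theorem applied to an auxiliary projection. First, I would use the hypothesis $\mathrm{ev}_\rho \pitchfork W$ to observe that the preimage
$$Z := \mathrm{ev}_\rho^{-1}(W) \subset A \times X$$
is a locally closed $C^r$ Banach submanifold of codimension $q$, by the standard implicit-function-theorem fact that a transverse preimage of a submanifold is a submanifold of the same codimension.

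Second, I would consider the projection $\pi \co Z \to A$ obtained by restricting the first-factor projection of $A \times X$, and establish two things: $(\mathrm{i})$ that $\pi$ is Fredholm of index $k-q$, and $(\mathrm{ii})$ that $a \in A$ is a regular value of $\pi$ if and only if $\rho_a \pitchfork W$. Both are pointwise diagram chases. At a point $(a,x) \in Z$ with $y = \rho_a(x)$, letting $L = (d\rho_a)_x$ and $\pi_W \co T_y Y \to T_y Y / T_y W$ denote the quotient, one identifies $T_{(a,x)} Z$ with $\ker\bigl(\pi_W \circ d(\mathrm{ev}_\rho)_{(a,x)}\bigr)$. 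A short calculation then shows that $d\pi_{(a,x)} \co T_{(a,x)} Z \to T_a A$ has kernel naturally isomorphic to $L^{-1}(T_y W)$ and cokernel naturally isomorphic to $T_y Y / (T_y W + L(T_x X))$. The index $k - q$ is independent of the point, and cokernel vanishing along the fiber $\pi^{-1}(a)$ is precisely the transversality condition $\rho_a \pitchfork W$.

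Third, since $r > \max(0, k - q)$, the Sard--Smale theorem applies to the $C^r$ Fredholm map $\pi$ and asserts that its set of regular values is residual in $A$. By step $(\mathrm{ii})$ this residual set is exactly $\{a \in A : \rho_a \pitchfork W\}$, finishing the proof.

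I expect the main obstacle to be the Fredholm/cokernel computation in step two: because $A$ and $Y$ may be infinite-dimensional while only $X$ is finite-dimensional and only $W$ has finite codimension, one must do the tangent-space bookkeeping carefully enough to see that $d\pi$ is genuinely Fredholm with the claimed index $k - q$, regardless of how the mixed finite/infinite dimensions interact. Once that is in place, the theorem reduces to a direct invocation of Sard--Smale together with the standard dictionary between regular values of a projection from a transverse preimage and transversality of slices.
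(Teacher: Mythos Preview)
Your argument is the standard and correct proof of the parametric transversality theorem: form the transverse preimage $Z = \mathrm{ev}_\rho^{-1}(W)$, verify that the projection $\pi\co Z \to A$ is Fredholm of index $k-q$ with regular values exactly the $a$ for which $\rho_a \pitchfork W$, and invoke Sard--Smale. The tangent-space computation you outline goes through as stated; the key point, which you correctly identify, is that surjectivity of $d(\mathrm{ev}_\rho)$ onto $T_yY/T_yW$ forces $\mathrm{coker}(d\pi_{(a,x)}) \cong T_yY/(T_yW + L(T_xX))$, so the cokernel is finite-dimensional and vanishes precisely when $\rho_a$ is transverse at $x$.

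However, you should be aware that the paper does not prove this theorem at all: it is quoted as Smale's result and attributed to Abraham--Robbin~\cite[Theorem 19.1]{Abraham-Robbin}. So there is no ``paper's own proof'' to compare against --- the author treats it as a black box and moves directly to the application in Corollary~\ref{stratification}. Your write-up is essentially the proof one finds in Abraham--Robbin, so it is entirely compatible with the paper's use of the result, but it supplies content the paper deliberately omits.
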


\begin{corollary}$\label{stratification}$
Let $Y$ be a second countable Banach space, and let $\{W_i\}_{i\in I}$ be a countable collection of (locally closed) submanifolds of $Y$ with finite codimension.  Then if $U = Y - \bigcup_{i\in I} W_i$ is a non-empty open set, it has connectivity at least $\mu -2$, where
$$\mu = \min \{\mathrm{codim} \,\,W_i \,\,:\,\, i\in I\}.$$
\end{corollary}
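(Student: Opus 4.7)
The plan is to show that $\pi_k(U)=0$ for every $k\leqs \mu-2$. Fix such a $k$ and a (smooth) map $f\co S^k\to U$. Since $Y$ is an affine Banach space, hence contractible, $f$ extends to a smooth map $\tilde{f}\co D^{k+1}\to Y$. The strategy is to perturb $\tilde f$ slightly so that its image avoids every $W_i$; if the perturbation is small enough, the boundary values still land in $U$ and are linearly homotopic there to $f$, and this produces a nullhomotopy of $f$ inside $U$.

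To find the perturbation I would apply Theorem~\ref{transversality}. Extend $\tilde{f}$ smoothly to an open ball $X\subset\bbR^{k+1}$ of radius $2$ centred at the origin, let $A$ be a small open ball around $0\in Y$, and define $\rho(a)(x)=\tilde{f}(x)+a$. The evaluation map $\mathrm{ev}_\rho(a,x)=\tilde f(x)+a$ is a submersion in the $A$-variable, so it is automatically transverse to every locally closed submanifold of $Y$. Smale's theorem then yields, for each $i\in I$, a residual subset $R_i\subset A$ of those $a$ for which $\tilde f+a\pitchfork W_i$; the hypothesis $\dim X=k+1\leqs \mu-1<\mathrm{codim}\,W_i$ renders the regularity requirement $r>\max(0,\dim X-\mathrm{codim}\,W_i)$ vacuous, so smoothness of $\tilde f$ is more than enough.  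Because $I$ is countable and $A$ is an open subset of the Banach space $Y$, the intersection $\bigcap_i R_i$ is residual in $A$ and therefore dense by the Baire category theorem quoted before Theorem~\ref{transversality}.

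Pick any $a\in\bigcap_i R_i$ with $\|a\|$ small. For every $i$, transversality together with the dimension inequality $\dim D^{k+1}=k+1<\mu\leqs \mathrm{codim}\,W_i$ forces $(\tilde f+a)^{-1}(W_i)=\emptyset$, so the perturbed map sends $D^{k+1}$ into $U$. Choosing $\|a\|$ small enough, compactness of $S^k$ and openness of $U$ ensure that the straight-line homotopy $(1-t)f+t(\tilde f+a)|_{S^k}$ stays in $U$; concatenating it with the contraction provided by $\tilde f+a\co D^{k+1}\to U$ nullhomotopes $f$ in $U$, giving $\pi_k(U)=0$ as required.

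The main technical obstacle is fitting Smale's theorem to this setting: the statement quoted requires $X$ to be a Banach manifold without boundary, which is why I would work on an open neighbourhood of $D^{k+1}$ in $\bbR^{k+1}$ and restrict afterwards, and one must verify that the translation representation $\rho$ is a $C^r$--representation in Smale's sense (immediate from smoothness of $\tilde f$) and that $\bigcap_i R_i$ is non-empty in $A$ (the content of Baire for the open set $A\subset Y$). Once these bookkeeping points are dispatched, the dimension inequality $\dim D^{k+1}<\mu$ does all of the geometric work.
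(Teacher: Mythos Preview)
Your argument is correct and, in fact, somewhat more economical than the paper's. Both proofs apply Smale's transversality theorem (Theorem~\ref{transversality}) to perturb a filling of $f$ off the strata $W_i$, but the choice of parameter space $A$ differs. The paper doubles the disk to the closed manifold $S^k$ and takes $A$ to be the Banach space of $C^{k+1}$ maps $S^k\to Y$ vanishing on the equator $S^{k-1}$; this keeps the boundary values of the filling \emph{exactly} equal to $f$, so no further homotopy is needed, at the cost of a more elaborate setup (the doubling trick, checking that $A$ is a Banach space, and verifying surjectivity of the evaluation derivative away from the equator). You instead take $A$ to be a small ball of constant translations in $Y$; the evaluation map is then trivially a submersion, and you handle the boundary discrepancy with a short straight-line homotopy inside $U$. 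Your approach avoids the doubling trick by extending $\tilde f$ to an open ball in $\bbR^{k+1}$, which is the standard device for applying transversality to maps out of manifolds with boundary. The only points you leave implicit are the initial smoothing of the continuous representative of $[f]\in\pi_k(U)$ (the paper invokes Kurzweil's theorem for this) and the explicit construction of a \emph{smooth} extension $\tilde f$ to $D^{k+1}$; both are routine, and you flag them yourself. Either route works; yours is shorter, the paper's has the aesthetic advantage of never moving the boundary.
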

\begin{proof}  
To begin, consider a continuous map $f\co S^{k-1} \to U$, with $k-1 \leqs \mu - 2$.  We must show that $f$ is null-homotopic in $U$; note that our homotopy need not be based.  First we note that since $U$ is open, $f$ may be smoothed, i.e. we may replace $f$ by a $C^{k+1}$ map $f'\co S^{k-1} \to U$ which is homotopic to $f$ inside $U$ (this follows, for example, from Kurzweil's approximation theorem~\cite{Kurzweil}). 

Choose a smooth function $\phi\co\bbR\to \bbR$ with the property that $\phi (t) = 1$ for $t\geqs 1/2$ and $\phi(t) = 0$ for all $t\leqs 1/4$.  Let $D^k \subset \bbR^k$ denote the closed unit disk, so $\partial D^k = S^{k-1}$.  
The formula $H^+ (x) = \phi(||x||) f(x/||x||)$ now gives a 
$C^{k+1}$ map $D^k \to Y$ which restricts to $f$ on each shell $\{x\in D^k \,|\,\,\, ||x|| = r\}$ with $r\geqs 1/2$.  Gluing two copies of $H^+$ now gives a $C^{k+1}$ ``null-homotopy'' of $f$ defined on the closed manifold $S^k$.

We now define 
$$A = \{F \in C^{k+1} (S^k, Y)\,\, | \,\,\, F(x) = 0 \textrm{\, for \,} x\in S^{k-1} \subset S^k\}.$$
Note that $A$ is a Banach space:  since $S^k$ is compact,~\cite[Theorem 5.4]{Abraham-Smale} implies that $C^{k+1} (S^k, Y)$ is a Banach space, and $A$ is a closed subspace of 
$C^{k+1}(S^k, Y)$.  (This is the reason for working with $C^{k+1}$ maps rather than smooth ones.)

Next, we define $\rho\co A\to C^{k+1} (S^k, Y)$ by setting $\rho(F) = F+H$.  The evaluation map
$\mathrm{ev}_{\rho}\co A\cross S^k \to Y$ is given by $\mathrm{ev}_{\rho} (F, x) = F(x) + H(x)$.
Since both $(F,x) \mapsto F(x)$ 
and $(F,x) \mapsto x \mapsto H(x)$ are of class $C^{k+1}$, so is their sum (the fact that the evaluation map $(F,x) \mapsto F(x)$ is of class $C^{k+1}$ follows from~\cite[Lemma 11.6]{Abraham-Smale}).

We are now ready to apply the transversality theorem.  Setting $X= S^k$, $W = W_i$ (for some $i\in I$) and with $A$ as above, all the hypotheses of Theorem~\ref{transversality} are clearly satisfied, except for the final requirement that 
$\mathrm{ev}_{\rho} \pitchfork W_i$.  But this is easily seen to be the case.  In fact, the derivative of $\mathrm{ev}_{\rho}$ surjects onto $T_y Y$ for each $y$ in the image of $\mathrm{ev}_{\rho}$, because given a $C^{k+1}$ map $F\co S^k \to Y$ with $F(x) = y$ and a vector $v\in T_y Y$, we may adjust $F$ in a small neighborhood of $x$ so that the map remains $C^{k+1}$ and its derivative hits $v$.  

We now conclude that 
$\{F\in A | \rho_a \pitchfork W_i\}$ is residual in $A$, for each stratum $W_i$.  Since the intersection of countably many residual sets is (by definition) residual, we in fact see that
$$\{F\in A \,\,\,|  \,\,\, \rho_F \pitchfork W_i \,\,\,\forall \, i \in I \}$$ 
is residual, hence dense, in $A$.  
In particular, since $A$ is non-empty, there exists a map
$F\co S^k \to Y$ such that $F|_{S^{k-1}} = f$ and $\rho_F = F+H$ is transverse to each $W_i$.  
Since $k < \mu = \mathrm{codim} (W_i)$, this implies that the image of $F+H$ must be disjoint from each $W_i$.  Hence $(F+H)(S^k) \subset U$, and $f$ is zero in $\pi_{k-1} U$.
\end{proof}

We can now prove the main result of this section.  This result extends work of Ho and Liu, who showed that spaces of flat connections over surfaces are connected~\cite[Theorem 5.4]{Ho-Liu-non-orient}.  We note, though, that their work applies to general structure groups $G$.  We also note that in the orientable case this result is closely related to work of Daskalopoulos and Uhlenbeck~\cite[Corollary 2.4]{Dask-Uhl}, which concerns the less-highly connected space of \emph{stable} bundles.

\begin{proposition}$\label{connectivity}$ Let $M = M^g$ denote a compact Riemann surface of genus $g$, and let $n>1$ be an integer.
Then the space $\flatc^k(n)$ of flat connections on a trivial rank $n$ bundle over $M$ is $2g(n-1)$--connected, and if $\Sigma$ is a non-orientable surface with double cover $M^g$, then the space of flat connections on any principal $U(n)$--bundle over $\Sigma$ is $(g(n-1) -1)$--connected.
\end{proposition}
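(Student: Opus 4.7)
The plan is to combine three results already in place: the Daskalopoulos--R{\aa}de deformation retraction (Theorem \ref{retraction}), the codimension estimate (Lemma \ref{codim}), and the Smale transversality connectivity bound (Corollary \ref{stratification}).

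For the orientable case, Theorem \ref{retraction} provides a deformation retraction from the semi-stable stratum $\Css^k(n)\subset \mcC^k(n)$ onto $\flatc^k(n)$, so it suffices to show that $\Css^k(n)$ is $2g(n-1)$--connected. By definition, $\Css^k(n)$ is the complement in $\mcC^k(n)$ of the union $\bigcup_{\mu} \mcC^k_\mu$ taken over admissible sequences $\mu\in\mathcal{I}(n)$ of length at least $2$. The index set $\mathcal{I}(n)$ is countable, each $\mcC^k_\mu$ is a locally closed submanifold of finite codimension in the separable Hilbert affine space $\mcC^k(n)$, and $\Css^k(n)$ is non-empty (the trivial holomorphic structure is semi-stable of type $((n,0))$). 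Lemma \ref{codim} gives that the minimum real codimension of a non-semi-stable stratum is $2g(n-1)+2$, so Corollary \ref{stratification} yields connectivity at least $(2g(n-1)+2) - 2 = 2g(n-1)$, as required.

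For the non-orientable case, I would run the same argument equivariantly. Let $\pi\co M^g\to\Sigma$ be the orientation double cover and $\sigma\co M^g\to M^g$ the orientation-reversing deck involution. A principal $U(n)$--bundle $P\to\Sigma$ pulls back to a $U(n)$--bundle over $M^g$ together with a lift of $\sigma$, and flat connections on $P$ correspond to $\sigma$--equivariant flat connections on $\pi^*P$. Using the Ho--Liu real Harder--Narasimhan stratification of the space of $\sigma$--compatible $(0,1)$--connections, together with a $\sigma$--equivariant Yang--Mills gradient-flow retraction onto the semi-stable locus, I would apply Corollary \ref{stratification} inside this ``real'' affine Hilbert space. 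Since one passes to a real form cut out by the $\sigma$--compatibility constraint, the codimension of each non-semi-stable stratum is approximately halved relative to the orientable case, yielding minimum codimension at least $g(n-1)+1$ and hence connectivity at least $g(n-1)-1$.

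The main obstacle is the non-orientable case: checking that all the analytical ingredients (Uhlenbeck compactness, the gradient-flow convergence of Daskalopoulos and R{\aa}de, and the stratification of holomorphic structures) admit the required $\sigma$--equivariant versions, and carrying out the codimension computation in Ho--Liu's real stratification along the lines of Lemma \ref{codim}. In the orientable case, by contrast, the machinery of Sections \ref{rep-flat-section} and \ref{Harder-Narasimhan} is essentially complete, and the proof reduces to assembling Theorem \ref{retraction}, Lemma \ref{codim}, and Corollary \ref{stratification}.
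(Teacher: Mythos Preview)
Your approach is essentially the same as the paper's, and the orientable case is correct as written.  In the non-orientable case you have identified the right ingredients: pull back to the double cover, use the $\widetilde{\tau}$--invariance of the Yang--Mills functional to restrict the gradient flow, and invoke Ho--Liu's result that the fixed strata $(\mcC^k_\mu)^{\widetilde{\tau}}$ are submanifolds of half the real codimension (so Lemma~\ref{codim} gives the bound $g(n-1)+1$ directly, without needing a new computation).

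There is one genuine point you have not addressed.  Corollary~\ref{stratification} requires the open set $U$ to be \emph{non-empty}, and in the non-orientable case the statement concerns \emph{every} principal $U(n)$--bundle $P$ over $\Sigma$.  You must therefore check that $\fc^k(P)\neq\emptyset$ for both isomorphism types of $P$ (equivalently, that the $\widetilde{\tau}$--fixed semi-stable locus is non-empty for both).  The paper handles this by a separate argument: there are exactly two isomorphism classes of $U(n)$--bundles over $\Sigma$, Proposition~\ref{rep-flat} identifies components of $\Hom(\pi_1\Sigma, U(n))$ with bundles admitting flat connections, and Ho--Liu's obstruction shows the representation space has at least two components.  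Without this step, your argument establishes the connectivity bound only for those bundles that happen to admit a flat connection.
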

\begin{proof}  We begin by noting that Sobolev spaces (of sections of fiber bundles) over compact manifolds are always second countable; this follows from Bernstein's proof of the Weierstrass theorem since we may approximate any function by smooth functions, and locally we may approximate smooth functions (uniformly up to the $k$th derivative for any $k$) by Bernstein polynomials.
Since the inclusion $\flatc^k(n)\injects \Css^k(n)$
is a homotopy equivalence (Theorem~\ref{retraction}), the orientable case now follows by applying Corollary~\ref{stratification} (and Lemma~\ref{codim}) to the Harder-Narasimhan stratification.\footnote{Note that $\Css^k(n)$ is in fact open.  This is a slightly subtle point.  Atiyah and Bott provide an ordering on the Harder--Narasimhan strata in which the semi-stable stratum is the minimum stratum, and the closure of any stratum lies in the union of the larger strata.  Hence the complement of $\Css^k(n)$ is the union of the closures of the other strata.  The Harder--Narasimhan stratification is actually locally finite, so this union of closed sets is closed.  Local finiteness can be deduced from convergence of the Yang--Mills flow and the fact that the critical values of the Yang--Mills functional form a discrete subset of $\bbR$ (see R{\aa}de~\cite{Rade}), but also follows from the more elementary methods of Atiyah and Bott, as explained in~\cite{Ramras-YM}.)}

For the non-orientable case, we work in the set-up of non-orientable Yang--Mills theory, as developed by Ho and Liu~\cite{Ho-Liu-non-orient}.
Let $\Sigma$ be a non-orientable surface with double cover $M^g$, and let $P$ be a principal $U(n)$--bundle over $\Sigma$.  Let $\pi\co M^g\to \Sigma$ be the projection, and let $\widetilde{P} = \pi^* P$.  Then the deck transformation $\tau \co M^g\to M^g$ induces an involution $\widetilde{\tau}\co \widetilde{P} \to \widetilde{P}$, and $\widetilde{\tau}$ acts on the space $\A^k(\widetilde{P})$ by pullback.  Connections on $P$ pull back to connections on $\widetilde{P}$, and in fact, the image of the pullback map is precisely the set of fixed points of $\tau$ (see, for example, Ho~\cite{Ho}).  Hence we have a homeomorphism
$\A^k(P) \homeo \A^k(\widetilde{P})^{\widetilde{\tau}}$, which we treat as an identification.  The Yang--Mills functional $L$ is invariant under $\widetilde{\tau}$, and hence its gradient flow restricts to a flow on $\A^k(P)$.  

Assume for the moment that $\fc^k(P) \neq \emptyset$.  
The flat connections on $P$ pull back to flat connections on $\widetilde{P}$, and again the image of $\flatc^k(P)$ in $\A(\widetilde{P})$ is precisely $\flatc^k(\widetilde{P})^{\widetilde{\tau}}$.  If we let 
$\Css^k(P)$ denote the fixed set $\Css^k(\widetilde{P})^{\widetilde{\tau}}$, then the gradient flow of $L$ restricts to give a deformation retraction from $\Css^k(P)$ to $\flatc^k(P)$.  The complement of 
$\Css^k(P)$ in $\A^k(P)$ may be stratified as follows: for each Harder-Narasimhan stratum $\mcC^k_\mu (\wt{P})\subset \A^k(\wt{P}) \isom \mcC^k \left(\wt{P} \cross_{U(n)} \bbC^n\right)$, we consider the fixed set $\mcC^k_{\mu} (P) := \left( \mcC^k_\mu (\wt{P}) \right)^{\wt{\tau}}$.
By Ho and Liu~\cite[Proposition 5.1]{Ho-Liu-non-orient}, $\mcC^k_{\mu} (P)$ is a real submanifold of $\A^k(P)$, and if it is non-empty then its real codimension in $\A^k(P)$ is half the real codimension of $\mcC^k_{\mu} (\wt{P})$ in $\A^k (\wt{P})$.  It now follows from Lemma~\ref{codim} that the codimensions of the non semi-stable strata $\mcC^k_{\mu} (P)$ are at least $g(n-1) + 1$ (this is a rather poor bound; see~\cite{Ramras-YM}).  It now follows from 
Corollary~\ref{stratification} that $\fc^k (P)$ has the desired connectivity.

To complete the proof, we must show that all bundles $P$ over $\Sigma$ actually admit flat connections.  This was originally proven by Ho and Liu~\cite[Theorem 5.2]{Ho-Liu-ctd-comp-II}, and in the current context may be seen as follows.  There are precisely two isomorphism types of principal $U(n)$--bundles over any non-orientable surface.  (A map from $\Sigma$ into $BU(n)$ may be homotoped to a cellular map, and since the 3-skeleton of $BU(n)$ is a 2-sphere, the classification of $U(n)$--bundles is independent of $n$.  Hence it suffices to note that the relative $K$--group $\widetilde{K}_0 (\Sigma)$ has order 2.)  We have just shown that the space of flat connections on each bundle is either empty or connected, so Proposition~\ref{rep-flat} gives a bijection between connected components of $\Hom(\pi_1 \Sigma, U(n))$ and bundles admitting a flat connection.  So it suffices to show that the representation space has at least two components.  This follows easily from the obstruction defined by Ho and Liu~\cite{Ho-Liu-ctd-comp-II}.  
\end{proof}

\begin{remark} In the non-orientable case, some improvement to Theorem~\ref{connectivity} is possible.  The results of Ho and Liu show that many of the Harder-Narasimhan strata for the double cover of non-orientable surface contain no fixed points, and hence the above lower bound on the minimal codimension of the Morse strata is not tight in the non-orientable case.  In the orientable case, the tightness of Lemma~\ref{codim} shows that the bound on connectivity of $\fc^k (n)$ is tight.  This can be proven using the Hurewicz theorem and a homological calculation~\cite{Ramras-YM}.
\end{remark}

As discussed in Section~\ref{gp-comp}, the following results are quite close to the work of Ho and Liu.

\begin{corollary}$\label{rep-ctd}$
For any compact Riemann surface $M$ and any $n\geqs 1$, the representation space
$\Hom(\pi_1 (M), U(n))$ is connected.  In particular, $\Rep(\pi_1 M)$ is stably group-like.
\end{corollary}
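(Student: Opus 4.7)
The plan is to translate the question about representation spaces into a question about spaces of flat connections via Proposition~\ref{rep-flat}, and then invoke the connectivity result Proposition~\ref{connectivity} for the orientable case. Specifically, I would argue that on a compact Riemann surface $M = M^g$, the \emph{only} principal $U(n)$--bundle admitting a flat connection is the trivial bundle, so the disjoint union in Proposition~\ref{rep-flat} has exactly one term.

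First, I would recall that principal $U(n)$--bundles over $M^g$ are classified by their first Chern class $c_1 \in H^2(M^g;\bbZ) \isom \bbZ$, since $U(n)$--bundles over a $2$--complex are classified by $H^2(-;\pi_1 U(n))$. If $P \to M^g$ admits a flat connection $A$, then the curvature $F(A)$ vanishes, and by Chern--Weil theory $c_1(P)$ is represented by $\frac{i}{2\pi}\mathrm{tr}\,F(A) = 0$. Hence $P$ is isomorphic to the trivial bundle $M \times U(n)$. By Proposition~\ref{rep-flat} this gives a homeomorphism
$$\flatc^{k}(n)/\G_0^{k+1}(M\times U(n)) \stackrel{\homeo}{\maps} \Hom(\pi_1 M, U(n)).$$

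Next, for $n \geqs 2$ the space $\flatc^{k}(n)$ is $2g(n-1)$--connected by Proposition~\ref{connectivity}, hence in particular path-connected; for $n=1$ it is an affine subspace (the kernel of $d$ acting on $L^2_k$ one-forms, which is path-connected). Since the quotient of a path-connected space by a continuous group action is path-connected, $\Hom(\pi_1 M, U(n))$ is path-connected for every $n \geqs 1$.

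Finally, to conclude $\Rep(\pi_1 M)$ is stably group-like with respect to the trivial representation $1\in\Hom(\pi_1 M, U(1))$: connectedness of each summand means $\pi_0 \Rep(\pi_1 M) \isom \bbN$, with $[n] = [1]^n$ in the monoid structure induced by block sum. Given any $\rho \in \Hom(\pi_1 M, U(k))$, the representation $\rho \oplus 1 \in \Hom(\pi_1 M, U(k+1))$ lies in a connected space and so is path-connected to the trivial representation $1^{k+1}$; this verifies the defining property of stably group-like. The main obstacle was really dispatched in Proposition~\ref{connectivity}; the remaining work is the topological classification of flat $U(n)$--bundles on $M^g$, which follows from the Chern--Weil argument above.
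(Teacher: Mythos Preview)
Your proof is correct and follows essentially the same strategy as the paper: reduce to connectivity of $\flatc^k(n)$ via Proposition~\ref{rep-flat}, observe that only the trivial bundle admits flat connections, and invoke Proposition~\ref{connectivity}. The only cosmetic differences are that the paper cites Thaddeus for the ``flat implies trivial bundle'' step (where you give the Chern--Weil argument directly), and handles $n=1$ on the representation side via the abelianization $\Hom(\pi_1 M, U(1)) \cong (S^1)^{2g}$ rather than on the connection side; both routes are equally valid.
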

\begin{proof}  The genus $0$ case is trivial.  When $n=1$, $U(1) = S^1$ is abelian and all representations factor through the abelianization of $\pi_1 M$.  Hence $\Hom(\pi_1 M, U(1))$ is a product of circles.  When $g, n\geqs 1$ 
we have $2g(n-1) \geqs 0$, so Proposition~\ref{connectivity} implies that 
$\flatc^k(n)$ is connected.  Connectivity of $\Hom(\pi_1(M), U(n))$ follows from Proposition~\ref{rep-flat}, because any $U(n)$ bundle over a Riemann surface which admits a flat connection is isomorphic to $M\cross U(n)$ (for a nice, elementary, and easy proof of this fact, see Thaddeus~\cite[pp. 78--79]{Thaddeus}).
\end{proof}

\begin{corollary}$\label{rep-ctd-no}$
Let $\Sigma$ be a compact, non-orientable, aspherical surface.  Then for any $n\geqs 1$, the representation space $\Hom(\pi_1 \Sigma, U(n))$ has two connected components, and if $\rho\in \Hom(\pi_1 \Sigma, U(n))$ and $\psi\in \Hom(\pi_1 \Sigma, U(m))$ lie in the non-identity components, then $\rho\oplus \psi$ lies in the identity component of $\Hom(\pi_1 \Sigma, U(n+m))$.  In particular, $\Rep(\pi_1 \Sigma)$ is stably group-like.
\end{corollary}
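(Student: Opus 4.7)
The plan is to combine the count of $U(n)$--bundle isomorphism types over $\Sigma$ with Propositions~\ref{rep-flat} and~\ref{connectivity} to pin down the component structure of $\Hom(\pi_1 \Sigma, U(n))$, and then exploit additivity of $c_1$ under direct sum.

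For $n\geqs 2$, since $\Sigma$ is compact, non-orientable, and aspherical, its orientable double cover $M^g$ has genus $g\geqs 1$. The non-orientable part of Proposition~\ref{connectivity} then gives that $\fc^k(P)$ is $(g(n-1)-1)$--connected, hence non-empty and connected, for each of the two isomorphism classes of $U(n)$--bundles $P$ over $\Sigma$ (as recalled in the proof of Proposition~\ref{connectivity}, both classes admit flat connections). Applying the homeomorphism of Proposition~\ref{rep-flat} then yields two connected components of $\Hom(\pi_1 \Sigma, U(n))$, one for each bundle type. For the remaining case $n=1$, I would compute directly: $U(1)$ is abelian, so representations factor through the abelianization $H_1(\Sigma;\bbZ) \isom \bbZ^{k-1}\oplus \bbZ/2$ (where $k$ is the non-orientable genus), giving $\Hom(\pi_1\Sigma, U(1)) \homeo (S^1)^{k-1} \sqcup (S^1)^{k-1}$.

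For the additivity claim, I would observe that under Proposition~\ref{rep-flat} the identity component of $\Hom(\pi_1\Sigma, U(n))$ corresponds to flat connections on the trivial $U(n)$--bundle (since the trivial representation induces the trivial associated bundle), while the other component corresponds to the non-trivial bundle. The associated bundle of $\rho\oplus \psi$ is the direct sum of the bundles associated to $\rho$ and $\psi$. Since $U(n)$--bundles over $\Sigma$ are classified by $c_1\in H^2(\Sigma;\bbZ)\isom \bbZ/2$ and $c_1$ is additive under direct sum, the sum of two non-trivial bundles is trivial. Hence $\rho\oplus\psi$ lies in the identity component of $\Hom(\pi_1\Sigma, U(n+m))$.

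For the stably group-like conclusion, given $\rho\in \Hom(\pi_1\Sigma, U(n))$ I would take $\rho^{-1}$ to be the zero-dimensional representation if $\rho$ already lies in the identity component, and otherwise the $1$--dimensional representation $\pi_1\Sigma \surjects \bbZ/2 \injects U(1)$ (which lies in the non-identity component of $\Hom(\pi_1\Sigma, U(1))$ by the previous paragraph). In either case, $\rho\oplus \rho^{-1}$ lies in the identity component of some $\Hom(\pi_1\Sigma, U(N))$, which is path-connected to $1^N$ inside that component. No serious obstacle is anticipated; the main subtle point is confirming that the correspondence of Proposition~\ref{rep-flat} identifies the trivial representation with flat connections on the trivial bundle, which is built into the construction of the associated bundle.
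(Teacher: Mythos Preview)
Your proposal is correct and follows essentially the same route as the paper's proof: Proposition~\ref{connectivity} plus Proposition~\ref{rep-flat} give two components (one per bundle type), and additivity of $c_1$ in $H^2(\Sigma;\bbZ)\isom\bbZ/2$ handles the block-sum statement. Your treatment is slightly more explicit than the paper's in the $n=1$ case and in spelling out the stably group-like inverse, but the argument is the same.
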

\begin{proof}  First we consider connected components.  The case $n=1$ follows as in Corollary~\ref{rep-ctd}.  When $n>1$, it follows immediately from Proposition~\ref{connectivity} that the space of flat connections on any principal $U(n)$--bundle over $\Sigma$ is connected.

As discussed in the proof of Proposition~\ref{connectivity}, there are precisely two bundles over $\Sigma$, classified by their first Chern classes, and there is a bijection between components of the representation space and isomorphism classes of bundles.  Hence the components of $\Hom(\pi_1 \Sigma, U(n))$ are classified by the Chern classes of their induced bundles, and since Chern classes are additive, the sum of two representations in the non-identity components of $\Hom(\pi_1 \Sigma, U(-))$ lies in the identity component.
\end{proof}


\section{Proof of the main theorem}$\label{main-thm-section}$

We can now prove our analogue of the Atiyah--Segal theorem.

\begin{theorem}$\label{main-thm}$ Let $M$ be a compact, aspherical surface (in other words, $M\neq S^2, \bbR P^2$).  Then for $*>0$,
$$\K_*(\pi_1 (M))\isom K^{-*}(M),$$
where $K^*(M)$ denotes the complex $K$--theory of $M$.  In the non-orientable case, this in fact holds in degree $0$ as well; in the orientable case, we have $\K_0(\pi_1 (M)) \homeo \Z$.
\end{theorem}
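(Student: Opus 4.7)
The plan is to chain together the three main ingredients of the paper: the group-completion model of Corollary~\ref{model}, the holonomy homeomorphism of Proposition~\ref{rep-flat}, and the Morse-theoretic connectivity estimate of Proposition~\ref{connectivity}, then pass to the colimit so the contractibility of the full affine space of connections takes over.

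First I would rewrite $\K(\pi_1 M)$ via Corollary~\ref{model} as the mapping telescope of $\Rep(\pi_1 M)_{hU}$ under $\oplus 1$, which reduces the problem to identifying each term $\Hom(\pi_1 M,U(n))_{hU(n)}$ up to high-dimensional information. By Proposition~\ref{rep-flat} we may replace $\Hom(\pi_1 M,U(n))$ by $\coprod_{[P_i]}\fc^{k}(P_i)/\G^{k+1}_0(P_i)$, where the sum runs over the finitely many bundle classes that admit a flat connection (only the trivial bundle in the orientable case, both classes in the non-orientable case, by Corollaries~\ref{rep-ctd} and~\ref{rep-ctd-no}). The residual $U(n)$-action matches $\G^{k+1}(P_i)/\G^{k+1}_0(P_i)\homeo U(n)$ from Lemma~\ref{restriction}, so standard manipulations of homotopy orbits give
\[
\Hom(\pi_1 M,U(n))_{hU(n)} \;\heq\; \coprod_{[P_i]} \fc^{k}(P_i)_{h\G^{k+1}(P_i)}.
\]

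Next I would compare with the contractible total space of connections. The inclusion $\fc^{k}(P_i)\injects \A^{k}(P_i)$ factors through the semistable stratum $\Css^{k}(P_i)$, which by Theorem~\ref{retraction} deformation retracts onto $\fc^{k}(P_i)$; and the complement of $\Css^{k}(P_i)$ in $\A^k(P_i)$ is the union of the non semi-stable Harder-Narasimhan strata, whose minimal codimension grows linearly in $n$ by Lemma~\ref{codim}. Thus Proposition~\ref{connectivity} gives a gauge-equivariant inclusion $\fc^{k}(P_i)\injects \A^{k}(P_i)$ whose connectivity tends to infinity with $n$. Passing to homotopy orbits (which preserves connectivity of equivariant maps) and using that $\A^{k}(P_i)$ is affine gives an equivalence, in ever-increasing ranges, of $\fc^{k}(P_i)_{h\G^{k+1}(P_i)}$ with $B\G^{k+1}(P_i)$. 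By Atiyah-Bott (and Lemma~\ref{smoothing}) the latter is $\Map_{P_i}(M,BU(n))$, the component of the mapping space classifying $P_i$.

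At this point the block-sum stabilization $\oplus 1$ on the representation-variety side corresponds, under the identifications above, to Whitney sum with a trivial line bundle on the classifying-space side. Taking the mapping telescope and using $\colim_n BU(n) \heq BU$, together with the fact that the $\pi_0$-contribution of Chern class/$w_2$-data stabilizes to give the $\Z$ (orientable) or $\Z\oplus\Z$ (non-orientable) summand, I would obtain
\[
\K(\pi_1 M) \;\heq\; \Map(M,\Z\cross BU),
\]
whose homotopy is $K^{-*}(M)$, except that in the orientable case $\pi_0$ misses the Chern-class summand (only the trivial bundle appears), yielding $\K_0(\pi_1 M)\isom\Z$.

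The main obstacle I anticipate is the bookkeeping in Step~2-3: making precise that the block-sum maps are compatible with the identification of $\Hom(\pi_1 M,U(n))_{hU(n)}$ with a disjoint union of classifying spaces of gauge groups, and checking that the telescope indexed by ``$\oplus 1$'' correctly realizes the rank-stabilization $\colim_n \Map(M,BU(n))$ componentwise. A secondary difficulty is the non-orientable case, where two bundle types contribute at every finite rank and Corollary~\ref{rep-ctd-no} must be used to verify that the non-identity components merge correctly under block sum so that the resulting $\pi_0$ computation matches $K^0(\Sigma)$.
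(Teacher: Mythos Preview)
Your proposal is correct and follows essentially the same route as the paper's proof: the group-completion model of Corollary~\ref{model}, the holonomy homeomorphism of Proposition~\ref{rep-flat} together with the free-action trick to pass from $EU(n)\times_{U(n)}(\fc^k/\G^{k+1}_0)$ to $\fc^k_{h\G^{k+1}}$, the Morse-theoretic connectivity estimate of Proposition~\ref{connectivity} to replace $\fc^k(P)_{h\G^{k+1}(P)}$ by $B\G^{k+1}(P)$ in the limit, the Atiyah--Bott identification of $B\G(P)$ with a component of $\Map(M,BU(n))$, and finally stabilization to $\Map(M,BU)$. The only slip is in your $\pi_0$ bookkeeping for the non-orientable case (two bundle types per rank give $\bbZ\oplus\bbZ/2$, not $\bbZ\oplus\bbZ$), and the paper handles the orientable case by landing in $\bbZ\times\Map^0(M,BU)$ rather than the full $\Map(M,\bbZ\times BU)$, which is exactly the correction you describe; otherwise your outline matches the paper's argument step for step.
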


We note that the isomorphism in Theorem~\ref{main-thm} is functorial for \emph{smooth} maps between surfaces, as will be apparent from the proof.  In particular, the isomorphism is equivariant with respect to the mapping class group of $M$.
The $K$--theory of surfaces is easily computed (using the Mayer-Vietoris sequence or the Atiyah-Hirzebruch spectral sequence), so Theorem~\ref{main-thm} gives a complete computation of the deformation $K$--groups.

\begin{corollary}$\label{k-groups}$
Let $M^g$ be a compact Riemann surface of genus $g>0$.  Then
 $$\K_*(\pi_1 M^g) = \left\{ \begin{array}{ll}
						\Z, & * = 0\\
						\Z^{2g}, & * \mathrm{\,\,odd}\\
						\Z^2,  & * \mathrm{\,\, even,\,\,} * > 0.
					\end{array}
				\right.
 $$
Let $\Sigma$ be a compact, non-orientable surface of the form $\Sigma = M^g \# N_j$ ($g\geqs 0$), where $j = 1$ or $2$ and $N_1 = \bbR P^2$, $N_2 = \bbR P^2 \# \bbR P^2$ (so $N_2$ is the Klein bottle).  Then if $\Sigma \neq \bbR P^2$, we have:
 $$\K_*(\pi_1 M^g \# N_j) = \left\{ \begin{array}{ll}
						\Z\oplus \Z/2\Z, & *  \mathrm{\,\, even} \\
						\Z^{2g+j-1}, & * \mathrm{\,\,odd.}
					\end{array}
				\right.
 $$
 \end{corollary}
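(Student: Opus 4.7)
The plan is to prove Theorem~\ref{main-thm}; Corollary~\ref{k-groups} then follows from a standard computation of $K^{-*}(M)$ using the Atiyah--Hirzebruch spectral sequence (which collapses on a surface) together with Bott periodicity. The overall strategy is to combine the group-completion model for $\K(\pi_1 M)$ from Corollary~\ref{model} with the translation from representations to flat connections supplied by Proposition~\ref{rep-flat}, and then exploit the high connectivity of the space of flat connections (Proposition~\ref{connectivity}) to replace $\flatc(P)$ by the contractible affine space $\A^{k,p}(P)$ in a range of dimensions that grows with the rank of $P$.

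The first step is to rewrite
$$\K(\pi_1 M) \heq \hocolim_{\oplus 1} \coprod_{n} \Hom(\pi_1 M, U(n))_{hU(n)}$$
using Corollary~\ref{model}, whose stable-group-likeness hypothesis is verified by Corollaries~\ref{rep-ctd} and~\ref{rep-ctd-no}. For each $n$, holonomy (Proposition~\ref{rep-flat}) together with the restriction homeomorphism $\G^{k+1,p}(P)/\G^{k+1,p}_0(P) \homeo U(n)$ of Lemma~\ref{restriction} produces a weak equivalence
$$\Hom(\pi_1 M, U(n))_{hU(n)} \heq \coprod_{[P]} \flatc^{k,p}(P)_{h\G^{k+1,p}(P)},$$
the disjoint union running over the finitely many isomorphism types of principal $U(n)$-bundle admitting a flat connection (Lemma~\ref{finiteness}).

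Next, Proposition~\ref{connectivity} combined with Theorem~\ref{retraction} says that the inclusion $\flatc^{k,p}(P) \injects \A^{k,p}(P)$ is highly connected, with connectivity tending to infinity in $\mathrm{rk}(P)$. A Serre spectral sequence comparison for the two Borel constructions (both for the same gauge-group action) promotes this to high connectivity of
$$\flatc^{k,p}(P)_{h\G(P)} \injects \A^{k,p}(P)_{h\G(P)} \heq B\G^{k+1,p}(P),$$
where contractibility of $\A^{k,p}(P)$ has been used. By Atiyah--Bott, $B\G^{k+1,p}(P) \heq \Map_P(M, BU(n))$, the component of $\Map(M, BU(n))$ classifying $P$; summing over $[P]$ assembles these components into all of $\Map(M, BU(n))$, and the $\oplus 1$ stabilization on the representation side corresponds to post-composition with the standard inclusion $BU(n) \injects BU(n+1)$. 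Since $M$ is a finite CW complex, mapping out of $M$ commutes with the resulting sequential colimit, and so
$$\hocolim_{n} \Map(M, BU(n)) \heq \Map(M, BU),$$
whose positive-degree homotopy groups are $K^{-*}(M)$. For $\ast = 0$, the identification $\K_0(\pi_1 M) = \mathrm{Gr}(\pi_0 \Rep(\pi_1 M))$ combined with Corollaries~\ref{rep-ctd} and~\ref{rep-ctd-no} gives $\bbZ$ (orientable case) and $\bbZ \oplus \bbZ/2$ (non-orientable case), matching $K^0(M)$ in the latter.

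The main obstacle will be the third step: transporting the connectivity of $\flatc(P) \injects \A(P)$ through the Borel construction $(-)_{h\G(P)}$ and then through the colimit over ranks, while keeping careful track of the $\oplus 1$ stabilization along the chain of equivalences produced by Proposition~\ref{rep-flat} (whose target identifications depend on choices of basepoints in the bundles $P_i$). The non-orientable case requires extra care because both isomorphism classes of $U(n)$-bundle must be handled simultaneously in order to reach the non-identity component of $\Hom(\pi_1 \Sigma, U(n))$, and the codimension bound from Proposition~\ref{connectivity} is only half as strong as in the orientable setting; fortunately it still grows linearly in the rank, which suffices for the colimit argument to trivialize the error term.
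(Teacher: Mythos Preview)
Your outline is correct and follows essentially the same route as the paper's proof of Theorem~\ref{main-thm}, from which the corollary is immediate via the Atiyah--Hirzebruch computation of $K^*(M)$. One imprecision worth flagging: your claim that ``summing over $[P]$ assembles these components into all of $\Map(M, BU(n))$'' is only true in the non-orientable case; over an orientable surface only the \emph{trivial} $U(n)$--bundle admits a flat connection, so you land in $\bbZ \times \Map^0(M, BU)$ rather than $\Map(M, \bbZ \times BU)$, which is precisely why the isomorphism with $K^{-*}(M)$ fails on $\pi_0$ in that case (you recover this correctly at the end, but the intermediate sentence is misleading).
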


\begin{proof}[Proof of Theorem~\ref{main-thm}]  

{\bf I. The orientable case:}  Let $M = M^g$ be a Riemann surface of genus $g>0$.  
We will exhibit a zig-zag of weak equivalences between $\K(\pi_1 M)$ and $\bbZ\cross \Map^0 (M, BU)),$
where $\Map^0$ denotes the connected component of the constant map. 

By Corollary~\ref{model} and Proposition~\ref{rep-flat} (and the fact that any bundle over a Riemann surface admitting a flat connection is trivial), the zeroth space of the spectrum $\K (\pi_1 M)$ is weakly equivalent to 
\begin{equation}\label{hocolim} \hocolim_{\stackrel{\maps}{\oplus 1}}  \Rep(\pi_1 M)_{hU}
\homeo \hocolim_{\stackrel{\maps}{\oplus [\tau]}} \coprod_{n} EU(n) \cross_{U(n)} \left( \flatc^{k} (n)/\G^{k+1}_0 (n) \right)
\end{equation}
where the maps are induced by direct sum with the trivial connection $\tau$ on the trivial line bundle.  Since the based gauge groups $\G^{k+1}_0 (n)$ act freely on $\A^{k}(n)$, and (by Mitter--Viallet~\cite{Mitter-Viallet} or Fine--Kirk--Klassen~\cite{FKK}) the projection maps are locally trivial principal $\G^{k+1}_0 (n)$--bundles, a basic result about homotopy orbit spaces~\cite[13.1]{A-B} shows that we have a weak equivalence
\begin{equation}\label{free-action} E\G^{k+1}(n) \cross_{\G^{k+1}(n)} \flatc^{k} (n) \stackrel{\heq}{\maps} EU(n) \cross_{U(n)} \left( \flatc^{k} (n)/\G^k_0 (n) \right).
\end{equation}
It now follows from (\ref{free-action}) that the mapping telescope (\ref{hocolim}) is weakly equivalent to
$$\hocolim_{\stackrel{\maps}{\oplus \tau}} \coprod_{n} \left(\flatc^k(n)\right)_{h\G^{k+1}(n)}
\heq \Z\cross \hocolim_{\stackrel{\maps}{\oplus \tau}} \flatc^k(n)_{h\G^{k+1} (n)}.$$
Proposition~\ref{connectivity} shows that the connectivity of the projections 
$\flatc^k(n)_{h\G^{k+1}(n)} \to B\G^{k+1}(n)$ tends to infinity, and since the homotopy groups of a mapping telescope may be described as colimits, these maps induce a weak equivalence
\begin{equation}\label{css-c}
\Z\cross \hocolim_{n\to\infty} \flatc^k(n)_{h\G^{k+1}(n)} 
	\maps \Z\cross \hocolim_{n\to \infty} B\G^{k+1}(n).
\end{equation}
By Lemma~\ref{smoothing}, the inclusion $\G^{k+1}(n)\injects \G(n)$ is a weak equivalence, so we may replace $\G^{k+1}(n)$ with $\G(n)$ on the right.

We have been using Milnor's functorial model $E^J(-)\to B^J(-)$ for universal bundles (see Remark~\ref{mixed}).  Atiyah and Bott have shown~\cite[Section 2]{A-B} that the natural map
$$\Map (M, EU(n)) \to \Map^0 (M, BU(n))$$
is a universal principal $\Map(M, U(n)) = \G(n)$ bundle, where again $\Map^0$ denotes the connected component of the constant map.  As in Remark~\ref{mixed}, the ``mixed model" for $B\G(n)$ gives a zig-zag of weak equivalences
$$B\G(n) \longleftarrow \left(E\G(n) \cross \Map (M, EU(n)) \right)/\Map(M, U(n)) \maps \Map^0(M, BU(n)),$$
which are natural in $n$ and hence induce weak equivalences on mapping telescopes (formed using the standard inclusions $U(n)\injects U(n+1)$).  
The projection 
$$\hocolim_{n\to \infty} \Map^0 (M, BU(n))\maps \colim_{n\to \infty} \Map^0 (M, BU(n)) 
=\Map^0 (M, BU)$$
is a weak equivalence, since maps from compact sets into a colimit land in some finite piece.  
This completes the desired zig-zag.

\vspace{.1in}

\noindent {\bf II. The non-orientable case:} Let $\Sigma\neq \bbR P^2$ be a non-orientable surface.  Once again, Proposition~\ref{model} and Proposition~\ref{rep-flat} tell us that the zeroth space of $\K(\pi_1 \Sigma)$ is weakly equivalent to 
$$\hocolim_{\stackrel{\maps}{\oplus \tau}} \coprod_{[P_i]} \left(\flatc^k(P_i)\right)_{h\G^{k+1}(P_i)},$$
where the disjoint union is taken over all $n$ and over all isomorphism types of principal 
$U(n)$--bundles.
By Proposition~\ref{connectivity} we know that $\flatc^k(P_i)$ is 
$(g(\widetilde{\Sigma})(n_i-1) - 1)$--connected, where $n_i = \mathrm{dim} (P_i)$ and
$g(\widetilde{\Sigma})$ denotes the genus of the orientable double cover of $\Sigma$.  Since we have assumed $\Sigma\neq \bbR P^2$, we know that $g(\widetilde{\Sigma})>0$, and hence the connectivity of $\flatc^k(P_i)$ tends to infinity with $n_i$.  Hence the natural map
\begin{equation}\label{forget}
\hocolim_{\stackrel{\maps}{\oplus \tau}} \coprod_{[P_i]} \left(\flatc^k(P_i)\right)_{h\G^{k+1}(P_i)}
	\maps \hocolim_{\stackrel{\maps}{\oplus 1}} \coprod_{[P_i]} B\G^{k+1}(P_i)
\end{equation}
is a weak equivalence (on the right hand side, $1$ denotes the identity element in $\G^{k+1}(1)$). 
As in the orientable case, we may now switch to the Atiyah-Bott models for
$B\G(P_i)$, obtaining the space
$$\hocolim_{\stackrel{\maps}{\oplus 1}} \coprod_{[P_i]} \Map^{P_i} (\Sigma, BU(n_i)),$$
where $\Map^{P_i}$ denotes the component of the mapping space consisting of those maps
$f\co\Sigma\to BU(n_i)$ with $f^*(EU(n_i))$ isomorphic to $P_i$.  But since the union is taken over \emph{all} isomorphism classes, this space is homotopy equivalent to 
$$\Z \cross \hocolim_{n\to \infty} \Map (\Sigma, BU(n)) \heq \Z\cross \Map(\Sigma, BU) = \Map(\Sigma, \Z\cross BU).$$
\end{proof}

We briefly discuss the spectrum-level version of Theorem~\ref{main-thm}.
The space level constructions used in the proof of Theorem~\ref{main-thm} can be lifted to spectrum-level constructions.  This involves constructing a variety of spectra (and maps between them) including, for instance, a spectrum arising from a topological category of flat connections and gauge transformations.  Each spectrum involved can be constructed from a $\Gamma$--space in the sense of Segal~\cite{Segal-cat-coh}, and the space-level constructions above essentially become weak equivalences between the group completions of the monoids underlying these $\Gamma$--spaces.  Since these group completions are weakly equivalent to the zeroth spaces of these $\Omega$-spectra, the space-level weak equivalences lift to weak-equivalences of spectra.  In the non-orientable case, one concludes that $\K(\pi_1 M)$ is weakly equivalent to the function spectrum $F(M, \mathbf{ku})$.  The end result in the orientable case is somewhat uglier, due to the failure of Theorem~\ref{main-thm} on $\pi_0$.  In this case, $\K(\pi_1 M)$ is weakly equivalent to a subspectrum of $F(M, \mathbf{ku})$, essentially consisting of those maps homotopic to a constant map.  
One may ask whether the intermediate spectra are $\mathbf{ku}$--algebra spectra and whether the maps between them preserve this structure.  More basically, one may ask whether the isomorphisms in Theorem~\ref{main-thm} come from a homomorphism of graded rings.  Recall that T. Lawson~\cite{Lawson-prod} has constructed a $\mathbf{ku}$--algebra structure on the spectrum $\K(G)$.  Constructing a compatible ring structure for the spectrum arising from flat connections appears to be a subtle problem.  This problem, and the full details of the spectrum-level constructions, will be considered elsewhere.

We now make the following conjecture regarding the homotopy type of the spectrum $\K (\pi_1 M)$, as a algebra over the connective $K$--theory spectrum $\mathbf{ku}$.
Note that it is easy to check (using Theorem~\ref{main-thm}) that the homotopy groups of the proposed spectrum are the same as 
$\K_*(\pi_1 (M))$.

\begin{conjecture}$\label{main-conjecture}$
For any Riemann surface $M^g$, the spectrum $\K (\pi_1 M)$ is weakly equivalent, as a $\mathbf{ku}$--algebra, to $ \mathbf{ku}\vee \left( \bigvee_{2g} \Sigma \mathbf{ku} \right) \vee \Sigma^2 \mathbf{ku}$.
\end{conjecture}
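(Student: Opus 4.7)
The plan is to upgrade the spectrum-level equivalence indicated in the paragraph preceding the conjecture to an equivalence of $\mathbf{ku}$-algebras, then identify the resulting spectrum as a wedge of shifts of $\mathbf{ku}$.  Since the homotopy groups of both sides already agree by Theorem~\ref{main-thm} together with Corollary~\ref{k-groups}, what remains is to build a $\mathbf{ku}$-algebra map between them that is a $\pi_*$-isomorphism.

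I would construct the map $\phi\co \mathbf{ku} \vee \left(\bigvee_{2g} \Sigma \mathbf{ku}\right) \vee \Sigma^2 \mathbf{ku} \maps \K(\pi_1 M^g)$ summand by summand.  The first summand is the unit of Lawson's $\mathbf{ku}$-algebra structure on $\K(\pi_1 M^g)$.  For each of $2g$ generators $\gamma_i$ of $H_1(M^g; \bbZ) \isom \bbZ^{2g}$, the composition $\pi_1 M^g \surjects H_1(M^g; \bbZ) \surjects \bbZ$ sending $\gamma_i$ to $1$ and the remaining basis elements to $0$ induces a $\mathbf{ku}$-algebra map $\K(\bbZ) \maps \K(\pi_1 M^g)$; restricting along the splitting $\Sigma \mathbf{ku} \injects \K(\bbZ) \heq F(S^1_+, \mathbf{ku})$ provided by Lawson's calculation produces the desired map on the $i$-th copy of $\Sigma \mathbf{ku}$.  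For the $\Sigma^2 \mathbf{ku}$ summand, I would use the bipermutative multiplication on $\K(\pi_1 M^g)$ to form the product of the maps from a symplectic pair, noting that $\Sigma \mathbf{ku} \wedge_{\mathbf{ku}} \Sigma \mathbf{ku} \heq \Sigma^2 \mathbf{ku}$.

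Verifying that $\phi$ is a weak equivalence reduces to a finite linear-algebra check, since both sides have free abelian homotopy groups in matching degrees; the key point is that, under the isomorphism of Theorem~\ref{main-thm}, $\phi$ sends the $2g$ $\Sigma\mathbf{ku}$-generators to a basis of $K^{-1}(M^g)$ and sends the $\Sigma^2 \mathbf{ku}$-generator to the cup product of a symplectic pair, which generates the ``fundamental class'' summand of $K^0(M^g)$ complementary to the image of the unit.  For the genus-$1$ case, this splitting is Lawson's product formula
$$\K(\bbZ^2) \heq \K(\bbZ) \wedge_{\mathbf{ku}} \K(\bbZ) \heq \mathbf{ku} \vee \left(\bigvee_{2} \Sigma \mathbf{ku}\right) \vee \Sigma^2 \mathbf{ku}.$$

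The main obstacle is exactly the one flagged in the paper: constructing a $\mathbf{ku}$-algebra model for the spectrum built from flat connections, and showing that the holonomy correspondence of Proposition~\ref{rep-flat} and the Morse-theoretic retraction of Theorem~\ref{retraction} preserve this multiplicative structure.  Even granting the module-level splitting, one must check that tensor product of flat bundles intertwines with tensor product of representations at the bipermutative level.  For $g \geqs 2$, there is a secondary obstacle: $\pi_1 M^g$ is neither a free nor a direct product, so the crucial ring computation (the cup product of a symplectic pair) cannot be reduced directly to Lawson's product formula, and one must either extract the multiplicative information from the Yang--Mills gradient flow itself, or develop a Mayer--Vietoris-type excision principle for deformation $K$-theory applied to the pushout presentation of $\pi_1 M^g$ arising from a genus-$g$ handle decomposition.
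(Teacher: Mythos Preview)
This statement is labeled a \emph{conjecture} in the paper; there is no proof to compare against.  The paper explicitly flags the problem you identify (constructing a compatible $\mathbf{ku}$--algebra structure on the intermediate spectra of flat connections) as open, and remarks that even the weaker question of whether the isomorphism in Theorem~\ref{main-thm} is a ring homomorphism is unsettled.

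Your outline is a reasonable strategy, and in the genus~$1$ case it does reduce to Lawson's product formula.  But for $g\geqs 2$ the argument is circular rather than merely incomplete.  Your map on the $\Sigma^2\mathbf{ku}$ summand is defined as the product of two $\Sigma\mathbf{ku}$ classes, and you propose to check it hits the correct generator of $\K_2(\pi_1 M^g)$ by transporting through the isomorphism of Theorem~\ref{main-thm} and computing the cup product in $K^0(M^g)$.  This step \emph{presupposes} that the isomorphism $\K_*(\pi_1 M^g)\isom K^{-*}(M^g)$ respects products, which is exactly the unproven multiplicative compatibility.  Without it, you have no independent handle on the product of two odd-degree classes in $\K_*(\pi_1 M^g)$: the paper's zig-zag passes through the Morse retraction of Theorem~\ref{retraction} and the gauge-theoretic homotopy quotients, none of which are shown to be multiplicative.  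So the ``finite linear-algebra check'' you describe cannot be carried out with the tools currently available.

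There is also a smaller point worth noting: even if you succeed in building a $\pi_*$--isomorphism of $\mathbf{ku}$--\emph{modules}, upgrading this to a $\mathbf{ku}$--\emph{algebra} equivalence requires specifying a ring structure on the wedge and checking that your map intertwines multiplications.  Your construction of $\phi$ on the $\Sigma^2\mathbf{ku}$ summand via the product in the target does not by itself make $\phi$ a ring map; one must also verify the remaining relations (e.g.\ that products of non-dual $\Sigma\mathbf{ku}$ classes vanish, and that the $\Sigma^2\mathbf{ku}$ class squares to zero).
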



\section{The stable moduli space of flat connections}$\label{coarse-moduli}$

In this section we study the coarse moduli space of flat unitary connections over a surface, after stabilizing with respect to rank.  By definition, the moduli space of flat connections over a compact manifold, with structure group $G$, is the space
$$\coprod_{[P_i]}\fc^{k,p} (P)/\G^{k+1, p} \isom \Hom(\pi_1 M, G)/G,$$
where the disjoint union is taken over isomorphism classes of principal $G$--bundles.
This homeomorphism follows immediately from Proposition~\ref{rep-flat} (so long as $k$, $p$, and $G$ satisfy the hypotheses of that result).  In particular, the moduli space of flat unitary connections is simply $\Hom(\pi_1 M, U(n))/U(n)$, and the inclusions $U(n)\injects U(n+1)$ allow us to stabilize with respect to rank.  The colimit $\mcM(\pi_1 M) $ of these spaces is just $\Hom(\pi_1 M, U)/U$, where $U = \colim U(n)$ is the infinite unitary group.  We call this space the stable moduli space of flat (unitary) connections.  (This rather simple stabilization suffices for surface groups, but in general must be replaced by a subtler construction; see Remark~\ref{stabilization}.)

T. Lawson~\cite{Lawson-simul} has exhibited a surprising connection between deformation $K$--theory and this stable moduli space.  His results suggest that for many compact, aspherical manifolds, only finitely many homotopy groups of this stable moduli space are non-zero, and in fact each component of this space should have the homotopy type of a finite product of Eilenberg-MacLane spaces (see Corollary~\ref{EM}).  This situation is closely tied to Atiyah--Segal phenomena in deformation $K$--theory: we expect that the homotopy groups of $\K(M)$ will agree with $K^{-*} (M)$ for $*$ at least one less than the cohomological dimension of the group, and after this point the homotopy groups of the stable moduli space should vanish.  We now explain Lawson's results and how they play out for surface groups.

For the remainder of this section, we think of $\K(\Gamma)$ as the connective spectrum described in Section~\ref{K-def}.  
Lawson's theorem states that for any finitely generated group $\Gamma$, there is a homotopy cofiber sequence of spectra
\begin{equation}\label{cofib}
\Sigma^2 \K(\Gamma) \stackrel{\beta}{\maps} \K(\Gamma) \maps \Rdef (\Gamma),
\end{equation}
and a corresponding long exact sequence in homotopy 
\begin{equation}\label{Bott-LES}
\cdots \maps\K_{*-2}(\Gamma)  \stackrel{\beta_*}{\maps}  \K_*(\Gamma) 
\maps \Rdef_* (\Gamma) \maps \cdots.
\end{equation}
Here $\Sigma^2 \K(\Gamma)$ denotes the second suspension of $\K(\Gamma)$ and $\Rdef (\Gamma)$ denotes the ``deformation representation ring" of $\Gamma$, defined below.  Note that for any spectrum $X$, one has $\pi_*(\Sigma X) = \pi_{*-1} X$, hence the degree shift in the long exact sequence (\ref{Bott-LES}).

As we will explain, the cofiber $\Rdef(\Gamma)$ is quite closely linked to the stable moduli space $\mcM(\Gamma)$.  The first map $\beta$ in the cofibration sequence (\ref{cofib}) is the Bott map in deformation $K$--theory, and is obtained from the Bott map in connective $K$--theory $\mathbf{ku}$ by smashing with $\K(\Gamma)$; this requires the $\mathbf{ku}$--module structure in deformation $K$--theory constructed by Lawson~\cite{Lawson-prod}.  

Lawson's construction of the Bott map relies on the modern theory of structured ring spectra.  In particular, his results require the model categories of module and algebra spectra studied by Elmendorf, Kriz, Mandell, and May~\cite{EKMM}, Elmendorf and Mandell~\cite{EM}, and Hovey, Shipley, and Smith~\cite{HSS}.

His work makes rigorous the following purely heuristic construction (which we include simply to provide some intuition).
We may consider the Bott element $\beta\in \pi_2 BU(n) = \bbZ$ as a family of representations via the map $BU(n) \maps EU(n)\cross_{U(n)} \Hom(\Gamma, U(n))$ given by $x\mapsto [\tilde{x}, I_n]$ where $\tilde{x}\in EU(n)$ is any lift of $x$; this is well-defined since $I_n$ is fixed under conjugation.  If 
$\Gamma$ is stably group-like, Theorem~\ref{gp-comp} allows us to think of homotopy classes in $\K_m(\Gamma)$
as families of representations $\rho\co S^{m} \to EU(n)\cross_{U(n)} \Hom(\Gamma, U(n))$.  Now tensoring with $\beta$ gives a new family $\rho\otimes \beta\co S^{m-2}\wedge S^2 = S^m\to \Hom(\Gamma, U(n))_{hU(n)}$, via the formula
$\rho\otimes\beta (z\smash w) = \rho(z)\otimes \beta(w)$.  Of course some care needs to be taken in defining this tensor product, since $\rho(z)$ and $\beta(w)$ lie in the \emph{homotopy orbit spaces}, rather than simply in the representation spaces (and we have also ignored questions of basepoints and well-definedness.)  Lawson's construction of the Bott map~\cite{Lawson-prod} is in practice quite different from this hands-on approach, and it would interesting to have a rigorous proof that the two agree.

Since $\K(\Gamma)$ is connective, $\pi_0 \Sigma^2 \K(\Gamma)$ 
and $\pi_1  \Sigma^2 \K(\Gamma)$ 
are zero, and hence the long exact sequence (\ref{Bott-LES}) immediately gives isomorphisms
\begin{equation}\label{pi_1}\K_i (\Gamma)  \isom \pi_i \Rdef (\Gamma) \end{equation}
for $i=0, \, 1$, as well as an exact sequence
\begin{equation}\label{pi_2}
\K_0(\Gamma) \stackrel{\beta_*}{\maps} \K_2(\Gamma) \maps \Rdef_2 \maps 0
\end{equation}
(is is not known whether the first map is injective in general).

If $\K_*(\Gamma)$ agrees with the periodic cohomology theory $K^{-*} (B\Gamma)$ for large $*$, then one should expect the Bott map to be an isomorphism after this point (certainly this would follow from a sufficiently natural correspondence between deformation $K$--theory of $\Gamma$ and topological $K$--theory of $B\Gamma$).  From the long exact sequence (\ref{Bott-LES}), one would then conclude, as mentioned above, that $\pi_* (\Rdef (\Gamma))$ vanishes in high degrees.

\begin{remark} In Section~\ref{K-def}, we described $\K(\Gamma)$ as the connective spectrum associated to a permutative category of representations, and we computed its homotopy groups in Theorem~\ref{main-thm}. Lawson works with a different model, built from the $H$--space $\coprod_n V(n) \cross_{U(n)} \Hom(\Gamma, U(n))$~\cite{Lawson-prod}.  Here $V(n)$ denotes the infinite Stiefel manifold of $n$--frames in $\bbC^{\infty}$.  One may interpolate between the two models by using a spectrum built from the $H$--space 
$$(EU(n) \cross V(n)) \cross_{U(n)} \Hom(\Gamma, U(n)),$$
and hence Theorem~\ref{main-thm} computes the homotopy groups of Lawson's deformation $K$--theory spectrum as well.
\end{remark}

We now describe the deformation representation ring and its relation to the stable moduli space.
Given any topological abelian monoid $A$ (for which the inclusion of the identity is a cofibration), one may apply Segal's infinite loop space machine~\cite{Segal-cat-coh} to produce a connective $\Omega$--spectrum; equivalently the bar construction $BA$ (the realization of the simplicial space $[n]\mapsto A^n$, with face maps given by multiplication and degeneracies given by insertion of the identity~\cite{Segal-class-ss}) is again an abelian topological monoid and one may iterate.  In particular, the zeroth space of this spectrum is $\Omega BA$.  The deformation representation ring $\Rdef (\Gamma)$ is the spectrum associated to the abelian topological monoid
$$\overline{\mathrm{Rep} (\Gamma)} = \coprodmo_{n=0}^{\infty} \Hom(\Gamma, U(n))/U(n),$$
so we have
$$\pi_* \Rdef (\Gamma) \isom \pi_* \Omega B \left( \overline{\mathrm{Rep} (\Gamma)} \right).$$
It is in general rather easy to identify the group completion $\Omega BA$ when $A$ is an abelian monoid.  

\begin{proposition}$\label{rep-ring}$
Let $\Gamma$ be a finitely generated discrete group, and assume that $\Rep(\Gamma)$ is stably group-like with respect to the trivial representation $1\in \Hom(\Gamma, U(1))$ (e.g. $\Gamma = \pi_1 M$ with $M$ a compact, aspherical surface).  Then the zeroth space of $\Rdef (\Gamma)$ is weakly equivalent to $\bbZ \cross \Hom(\Gamma, U)/U$.
Hence for $*>0$ we have 
$$\pi_* \Hom(\Gamma, U)/U \isom \pi_* \Rdef (\Gamma),$$
and it follows from (\ref{pi_1}) that $\pi_1 \Hom(\Gamma, U)/U \isom \K_1(\Gamma)$.
\end{proposition}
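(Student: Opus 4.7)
The plan is to recognize $\overline{\Rep(\Gamma)}$ as an abelian topological monoid satisfying the hypotheses of an abelian-monoid version of Theorem~\ref{gp-completion-cor}, apply that group completion statement to identify $\Omega B\overline{\Rep(\Gamma)}$ with a mapping telescope, and then identify the telescope with $\bbZ \times \Hom(\Gamma, U)/U$ by direct inspection.

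First I would observe that $\overline{\Rep(\Gamma)}$ is an abelian topological monoid. Direct sum is not literally commutative on $\Hom(\Gamma, U(n)) \times \Hom(\Gamma, U(m))$, but $\rho\oplus\sigma$ and $\sigma\oplus\rho$ are conjugate by the permutation matrix that swaps the two blocks, so they represent the same class in $\Hom(\Gamma, U(n+m))/U(n+m)$. The stably group-like hypothesis on $\Rep(\Gamma)$ with respect to $1\in \Hom(\Gamma, U(1))$ passes immediately to $\overline{\Rep(\Gamma)}$, since the monoid of components is unchanged by quotienting by $U$.

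Next I would invoke the abelian analogue of Theorem~\ref{gp-completion-cor}: for an abelian topological monoid $A$ stably group-like with respect to $a\in A$, the natural map
$$\hocolim\bigl( A \xrightarrow{+a} A \xrightarrow{+a} \cdots \bigr) \longrightarrow \Omega BA$$
is a weak equivalence. This is essentially the classical McDuff--Segal statement; in the abelian case it is strictly easier than the unitary version proved in~\cite{Ramras-excision}, because the abelian structure already forces $\pi_1$ of the group completion to be abelian, so no $+$-construction intervenes.

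Finally I would identify the telescope. Writing $\overline{\Rep(\Gamma)} = \coprod_n R_n$ with $R_n = \Hom(\Gamma, U(n))/U(n)$, the map $+1$ carries $R_n$ into $R_{n+1}$ by $\rho\mapsto\rho\oplus 1$, so the telescope decomposes as a disjoint union indexed by virtual dimension $k\in\bbZ$, each component being the ordinary colimit of the $R_n$ under stabilization. Because the stabilization maps are compatible with the inclusions $U(n)\hookrightarrow U(n+1)$, this colimit is homeomorphic to $\Hom(\Gamma, U)/U$, yielding $\Omega B\overline{\Rep(\Gamma)} \heq \bbZ \times \Hom(\Gamma, U)/U$. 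The $\pi_*$ statement for $*>0$ and the identification of $\pi_1$ with $\K_1(\Gamma)$ then drop out of~(\ref{pi_1}). The only subtlety I expect is a mild point-set check that the quotient by $U = \colim U(n)$ commutes with the colimit of the $R_n$, which is routine since the $U(n)$-actions are compatible with the stabilization maps.
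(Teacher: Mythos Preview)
Your approach is essentially the paper's: pass the stably group-like hypothesis to the abelian monoid $\overline{\Rep(\Gamma)}$, apply the group-completion theorem (the paper cites \cite[Theorem~3.6]{Ramras-excision} and notes that the extra ``anchored'' hypothesis there is automatic for abelian monoids), and identify the resulting telescope.

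One imprecision is worth correcting. The mapping telescope does \emph{not} decompose with each component equal to the ordinary colimit $\colim_n R_n$; each component is itself the mapping telescope of $R_0 \to R_1 \to \cdots$. You still need to check that the natural projection from this telescope to $\colim_n R_n$ is a weak equivalence. The paper does this by arguing that compact sets in the colimit land in a finite stage, which requires knowing that the quotients $R_n = \Hom(\Gamma, U(n))/U(n)$ are Hausdorff (orbits of the compact group $U(n)$ are closed in the metric space $\Hom(\Gamma, U(n))$). That is the point-set subtlety you should be flagging; the colimit-commutes-with-quotient issue you mention is a separate (and, as you say, routine) step.
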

\begin{proof}  If $\Rep(\Gamma)$ is stably group-like with respect to the trivial representation $1\in \Hom(\Gamma, U(1))$, then the same is true for the monoid of isomorphism classes $\overline{\mathrm{Rep} (\Gamma)}$.
As in Section~\ref{gp-comp}, we can now apply Ramras~\cite[Theorem 3.6]{Ramras-excision}.  (That result has one additional hypothesis -- the representation $1$ must be ``anchored" -- but this is trivially satisfied for abelian monoids.)  Hence
$$\Omega B \left(\overline{\mathrm{Rep} (\Gamma)} \right) \heq \hocolim_{\stackrel{\maps}{\oplus 1}} \overline{\mathrm{Rep} (\Gamma)} \heq \bbZ \cross \hocolim_{n\to \infty} \Hom(\Gamma, U(n))/U(n),$$
and to complete the proof it suffices to check that the projection
$$\hocolim_{n\to\infty} \Hom (\Gamma, U(n))/U(n)
\maps \colim_{n\to \infty} \Hom (\Gamma, U(n))/U(n)$$
is a weak equivalence.
But this follows from the fact that in both spaces, compact sets land in some finite piece (for the colimit, this requires that points are closed in $\Hom(\Gamma, U(n))/U(n)$; this space is in fact Hausdorff because the orbits of $U(n)$ are compact, hence closed in $\Hom(\Gamma, U(n))$, which is a metric space, hence normal).
\end{proof}

\begin{remark}$\label{stabilization}$
When $\Rep(\Gamma)$ is not stably group-like with respect to the trivial representation, a more complicated stabilization process can be used to obtain a concrete model for the zeroth space of $\Rdef (\Gamma)$.  Of course if $\Rep(\Gamma)$ is stably group-like with respect to some other representation $\rho$, we can simply replace block sum with $1$ by block sum with $\rho$.  If there is no such representation $\rho$, then we proceed by means of a rank filtration: the submonoids 
$\overline{\mathrm{Rep}_n (\Gamma)} \subset \overline{\mathrm{Rep} (\Gamma)}$ generated by representations of dimension at most $n$ are finitely generated (by any set of representatives for the finite sets $\pi_0 \Hom(G, U(n))$) hence stably group-like with respect to the sum $\Phi_n$ of all the generators.  One now obtains a weak equivalence between the zeroth space of $\Rdef (\Gamma)$ and the colimit
$\colim_{n\to \infty} (\colim_{\stackrel{\maps}{\oplus \Phi_n}} \overline{\mathrm{Rep}_n (\Gamma)})$.
The proof is similar to the arguments in~\cite[Section 5]{Ramras-excision}, and will not be needed here.
\end{remark}

We can now show, as promised above, that when $\Rep(\Gamma)$ is stably group-like, each component of the stable moduli space of flat connections has the homotopy type of a product of Eilenberg-MacLane spaces.

\begin{corollary}$\label{EM}$ Let $\Gamma$ be a finitely generated discrete group, and assume that $\Rep(\Gamma)$ stably group-like with respect to the trivial representation.  Then the stable moduli space $\mathcal{M}(\Gamma) = \Hom(\Gamma, U)/U$ is homotopy equivalent to 
$$\pi_0 \mathcal{M}(\Gamma) \cross \prod_{i=0}^\infty K(\pi_i \mathcal{M} (\Gamma), i)$$
where $K(\pi, i)$ denotes an Eilenberg-MacLane space.
\end{corollary}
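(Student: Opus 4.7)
The strategy is to use T. Lawson's cofiber sequence to recognize $\Rdef(\Gamma)$ as an $\mathbf{H}\bbZ$-module spectrum, and then invoke the classical splitting of such spectra into products of Eilenberg-MacLane spectra.

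Proposition~\ref{rep-ring} identifies $\Omega^\infty\Rdef(\Gamma)$ with $\bbZ\times\mcM(\Gamma)$, reducing the problem to splitting this infinite loop space. The cofiber sequence $\Sigma^2\K(\Gamma)\xrightarrow{\beta}\K(\Gamma)\to\Rdef(\Gamma)$ recalled above exhibits $\Rdef(\Gamma)$ as the cofiber of the Bott map applied to the $\mathbf{ku}$-module spectrum $\K(\Gamma)$. Since the cofiber of the Bott map on $\mathbf{ku}$ itself is $\mathbf{H}\bbZ$, and cofibers commute with the relative smash product over $\mathbf{ku}$, one obtains $\Rdef(\Gamma)\heq \K(\Gamma)\wedge_{\mathbf{ku}}\mathbf{H}\bbZ$, which inherits a canonical $\mathbf{H}\bbZ$-module structure.

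I would then invoke the classical fact that every connective $\mathbf{H}\bbZ$-module spectrum $E$ is weakly equivalent to $\prod_{n\ge 0}\Sigma^n\mathbf{H}\pi_n E$. This follows from the stable Dold--Kan equivalence between $\mathbf{H}\bbZ$-module spectra and chain complexes of abelian groups, combined with the fact that $\bbZ$ is hereditary: over such a ring, every chain complex is quasi-isomorphic to its homology viewed as a chain complex with zero differential. An explicit quasi-isomorphism can be produced by splitting the short exact sequence $0\to Z_n\to C_n\to B_{n-1}\to 0$ (possible because $B_{n-1}$, being a submodule of a projective, is itself projective) and using this splitting to project onto homology levelwise. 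Applied to $\Rdef(\Gamma)$ and after passage to $\Omega^\infty$, this gives
\begin{equation*}
\bbZ\times\mcM(\Gamma)\,\heq\, \prod_{n\ge 0}K(\pi_n\Rdef(\Gamma),n).
\end{equation*}
Combining with the identifications $\pi_0\Rdef(\Gamma)=\bbZ\oplus\pi_0\mcM(\Gamma)$ and $\pi_n\Rdef(\Gamma)=\pi_n\mcM(\Gamma)$ for $n\ge 1$ from Proposition~\ref{rep-ring}, and splitting off the discrete $\bbZ$ factor, yields the desired product decomposition of $\mcM(\Gamma)$.

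The main obstacle is invoking the stable Dold--Kan equivalence for $\mathbf{H}\bbZ$-module spectra, which relies on the modern theory of structured ring spectra. Fortunately, Lawson's construction of the Bott cofiber sequence already lives in this framework (via EKMM or symmetric spectra), so the necessary foundations are in place and no additional model-categorical work is required.
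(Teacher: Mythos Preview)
Your argument is correct but takes a substantially different route from the paper's. The paper never invokes Lawson's cofiber sequence or any $\mathbf{H}\bbZ$-module structure; instead it exploits the elementary observation that $\overline{\Rep(\Gamma)}$ is an \emph{abelian} topological monoid (passing to conjugacy classes makes block sum commutative), so that $B\big(\overline{\Rep(\Gamma)}\big)$ and hence $\Omega B\big(\overline{\Rep(\Gamma)}\big)$ (with the pointwise-multiplication loop structure) are again connected abelian topological monoids. The classical Dold--Thom-type fact that any connected abelian topological monoid is weakly equivalent to a product of Eilenberg--MacLane spaces then finishes the argument immediately. Your approach via $\Rdef(\Gamma)\heq \K(\Gamma)\wedge_{\mathbf{ku}}\mathbf{H}\bbZ$ and stable Dold--Kan is more structural and would generalize to situations where no abelian monoid is visible but an $\mathbf{H}\bbZ$-module structure is; the paper's argument, by contrast, is shorter, requires only classical algebraic topology, and makes no use of structured ring spectra or even of the Bott cofiber sequence.
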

\begin{proof} By Proposition~\ref{rep-ring}, each component of $\Hom(\Gamma, U)/U$ is homotopy equivalent to a path component of the zeroth space of $\Rdef (\Gamma)$.  
As discussed above, this zeroth space is the loop space on the abelian topological monoid 
$B(\overline{\mathrm{Rep} (\Gamma)})$.  Recall that any connected abelian topological monoid is weakly equivalent to a product of Eilenberg-MacLane spaces (Hatcher~\cite[Corollary 4K.7]{Hatcher}).  Using the abelian monoid structure on the loop space derived from point-wise multiplication of loops (rather than concatenation) one now sees that the identity component of $\Omega B(\overline{\mathrm{Rep} (\Gamma)}$ is a product of Eilenberg-MacLane spaces.
But  $\Omega B(\overline{\mathrm{Rep} (\Gamma)}$ is a group-like $H$--space, so each of its path components is homotopy equivalent to the identity component, completing the proof.  (Note that each space in question has the homotopy type of a CW-complex; $\Hom(\Gamma, U(n))/U(n)$ is a CW-complex by Park and Suh~\cite{Park-Suh}.)
\end{proof}

Combining Proposition~\ref{rep-ring} with (\ref{pi_1}) and Theorem~\ref{main-thm} yields:

\begin{corollary}$\label{moduli}$
For any compact, aspherical surface $M$, the fundamental group of the stable moduli space of flat unitary connections on $M$ is isomorphic to the complex $K$--group $K^{-1} (M)$.  Equivalently, if $M^g$ is a Riemann surface of genus $g$,
$$\pi_1 \left( \Hom(\pi_1 M^g, U)/U \right) \isom \Z^{2g},$$
and in the non-orientable cases (letting $K$ denote the Klein bottle) we have 
$$\pi_1 \left( \Hom(\pi_1 M^g\# \bbR P^2, U)/U \right) \isom \Z^{2g} \,\,\,\,
\mathrm{and} \,\,\,\,
\pi_1 \left( \Hom(\pi_1 M^g\# K, U)/U \right) \isom \Z^{2g+1}.$$
\end{corollary}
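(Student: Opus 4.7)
The plan is to chain together three identifications that have already been put in place. First, I would invoke Proposition~\ref{rep-ring} to convert homotopy groups of the stable moduli space into homotopy groups of the deformation representation ring spectrum: the hypothesis that $\Rep(\pi_1 M)$ is stably group-like with respect to the trivial representation is exactly what Corollaries~\ref{rep-ctd} and~\ref{rep-ctd-no} supply for every compact aspherical surface $M$ (orientable or not), so one obtains
$$\pi_1\bigl(\Hom(\pi_1 M, U)/U\bigr) \isom \pi_1 \Rdef(\pi_1 M).$$

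Second, I would feed this into Lawson's cofiber sequence (\ref{cofib}). Since $\K(\pi_1 M)$ is connective, $\pi_0$ and $\pi_1$ of $\Sigma^2 \K(\pi_1 M)$ vanish, and the long exact sequence (\ref{Bott-LES}) collapses to give the isomorphism (\ref{pi_1}) in degree one, namely $\pi_1 \Rdef(\pi_1 M) \isom \K_1(\pi_1 M)$. Third, Theorem~\ref{main-thm} identifies $\K_1(\pi_1 M) \isom K^{-1}(M)$. Composing these three isomorphisms yields the first assertion of the corollary.

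To get the explicit groups in the surface-by-surface statements, I would simply read off the odd-degree entries of Corollary~\ref{k-groups}. For an orientable genus $g$ surface, $\K_1(\pi_1 M^g) \isom \Z^{2g}$. For the non-orientable surfaces $M^g \# N_j$, Corollary~\ref{k-groups} gives $\Z^{2g+j-1}$ in odd degrees, which specializes to $\Z^{2g}$ for $N_1 = \bbR P^2$ and to $\Z^{2g+1}$ for $N_2 = K$.

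Since each of the three ingredient isomorphisms is already proved earlier in the paper and their hypotheses are verified in the excerpt, there is no real obstacle here — the proof is essentially an assembly argument. The one place that could reasonably be called the ``main step'' is confirming, via Corollaries~\ref{rep-ctd} and~\ref{rep-ctd-no}, that Proposition~\ref{rep-ring} applies uniformly across all the cases under consideration (in particular in the non-orientable cases, where the representation variety is disconnected but block sum with the trivial representation still exhibits stable cofinality). Once that observation is in place, the remainder is a formal chase.
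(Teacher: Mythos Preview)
Your proposal is correct and matches the paper's approach exactly: the paper's one-line proof is ``Combining Proposition~\ref{rep-ring} with (\ref{pi_1}) and Theorem~\ref{main-thm} yields'', which is precisely your three-step chain, followed by reading off the explicit groups from Corollary~\ref{k-groups}. Your additional remark about verifying the stably group-like hypothesis via Corollaries~\ref{rep-ctd} and~\ref{rep-ctd-no} is the right justification (and is already built into the statement of Proposition~\ref{rep-ring}, which cites surface groups as an example).
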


When $M$ is orientable we can also calculate the second homotopy group of the stable moduli space.

\begin{proposition}$\label{moduli2}$
Let $M^g$ be a Riemann surface of genus $g\geqs 1$.  Then 
$$\pi_2 \left( \Hom(\pi_1 M^g, U)\right)/U \isom \bbZ.$$
\end{proposition}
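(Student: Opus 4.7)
The plan is to reduce the claim to a calculation involving T.~Lawson's Bott cofiber sequence~(\ref{cofib}).  By Corollary~\ref{rep-ctd}, $\Rep(\pi_1 M^g)$ is stably group-like with respect to the trivial representation, so Proposition~\ref{rep-ring} gives $\pi_2 (\Hom(\pi_1 M^g, U)/U) \isom \Rdef_2(\pi_1 M^g)$.  The exact sequence~(\ref{pi_2}) identifies this with the cokernel of the Bott map $\beta_*\co \K_0(\pi_1 M^g) \to \K_2(\pi_1 M^g)$.  By Corollary~\ref{k-groups} the source is $\bbZ$ and the target is $\bbZ^2$, so it suffices to show that the image of $\beta_*$ is a rank-one direct summand, since the quotient of $\bbZ^2$ by such a summand is $\bbZ$.

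To pin down $\beta_*$, the idea is to exploit naturality of the Bott cofiber sequence under the unique homomorphism $\pi_1 M^g \maps 1$.  This produces a commutative square
\[ \begin{array}{ccc}
\K_0(1) & \stackrel{\beta_*}{\longrightarrow} & \K_2(1) \\
\downarrow & & \downarrow \\
\K_0(\pi_1 M^g) & \stackrel{\beta_*}{\longrightarrow} & \K_2(\pi_1 M^g),
\end{array} \]
in which $\K_*(1) = \pi_* \mathbf{ku}$.  The top row is the Bott map on $\mathbf{ku}$ itself, an isomorphism $\bbZ \isom \bbZ$.  The left vertical map carries the trivial $n$--dimensional representation of the trivial group to the trivial $n$--dimensional representation of $\pi_1 M^g$; since both $\K_0$ groups are generated by the respective one-dimensional trivial representations (by Corollary~\ref{rep-ctd} and the description of $\K_0$ in Section~\ref{K-def}), this map is an isomorphism as well.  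By commutativity, the image of the lower Bott map coincides with the image of the right vertical map $\K_2(1) \maps \K_2(\pi_1 M^g)$.

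It remains to identify this map via Theorem~\ref{main-thm}.  Tracing the zig-zag in the proof of Theorem~\ref{main-thm} through naturality under the constant map $M^g \maps \mathrm{pt}$, one finds that $\K_2(1) \maps \K_2(\pi_1 M^g)$ corresponds to the pullback map $\bbZ = K^0(\mathrm{pt}) \maps K^0(M^g) \isom \bbZ \oplus \bbZ$, where the target is split by rank and first Chern class.  This pullback sends $n$ to $(n,0)$, so the image is the rank summand and the cokernel is $\bbZ$, yielding the proposition.

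The main obstacle is justifying this last naturality statement, since the zig-zag in Theorem~\ref{main-thm} is built from surface-specific gauge-theoretic constructions and a genuine point is not a surface.  However, each space appearing in the zig-zag admits a natural map from its analog for the trivial group: the map on homotopy orbit spaces sends $[x] \in BU(n) = EU(n) \cross_{U(n)} \Hom(1, U(n))$ to $[x, \mathrm{triv}]$ in $EU(n) \cross_{U(n)} \Hom(\pi_1 M^g, U(n))$; the gauge-theoretic models are compatible via the unique flat connection on the trivial bundle over a point; and finally $BU = \Map(\mathrm{pt}, BU) \injects \Map^0 (M^g, BU)$ is the constant-map inclusion.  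Chasing the Bott generator of $\pi_2 BU$ through these identifications should confirm that the right vertical map sends $n$ to $(n,0) \in \bbZ \oplus \bbZ$, as needed.
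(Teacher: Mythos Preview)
Your reduction to the cokernel of the Bott map, and your identification of the image of $\beta_*$ with the image of $q^*\co \K_2(1) \to \K_2(\pi_1 M^g)$, match the paper exactly. The divergence comes at the final step. You attempt to pin down $q^*$ explicitly by chasing the zig-zag of Theorem~\ref{main-thm} and comparing it with constant-map inclusions, and you correctly flag this as incomplete: the zig-zag is built from gauge-theoretic constructions over the surface, and its naturality with respect to $M^g \to \mathrm{pt}$ is not established in the paper (indeed, compatibility of the zig-zag with the $\mathbf{ku}$--algebra structure is singled out after Theorem~\ref{main-thm} as a subtle open issue).

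The paper sidesteps this entirely with a one-line algebraic observation. The inclusion $\eta\co \{1\} \hookrightarrow \pi_1 M^g$ satisfies $q \circ \eta = \mathrm{id}$, so $\eta^*$ splits $q^*$. Thus $q^*\co \bbZ \to \bbZ \oplus \bbZ$ is a split injection, and an elementary argument shows any such map has torsion-free cokernel, hence cokernel $\bbZ$. You had already reduced to showing that the image of $\beta_*$ is a rank-one direct summand; the splitting delivers exactly that, with no need to trace the zig-zag or identify $q^*$ explicitly.
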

\begin{proof}  In light of the exact sequence (\ref{pi_2}) and Proposition~\ref{rep-ring}, it suffices to compute the cokernel of the Bott map $\K_0(\pi_1 M^g)\to \K_2(\pi_1 M^g)$.  Recall that in~\cite{Lawson-prod}, the Bott map arises as multiplication by the element $q^*(b)$, where $q\co \Gamma \to \{1\}$ is the projection and $b\in \pi_2 \K\{1\} \isom \pi_2 \mathbf{ku}$ is the canonical generator (we will see below that the computation of $\K\{1\}$ follows from Corollary~\ref{model}).  Since $\K_0(\pi_1 M^g) \isom \bbZ$ is generated by the unit of the ring $\K_*(\pi_1 M^g)$, it follows that $\beta(\K_0(\pi_1 M^g))\subset \K_2(\pi_1 M^g)$ is generated by $\beta([1]) = q^*(b) \cdot [1] = q^* (b)$.  Hence we simply need to understand the image of the canonical generator $b\in \pi_2 \mathbf{ku}$ under the map $q^*$.
The inclusion $\eta\co \{1\}\injects \pi_1 M^g$ induces a splitting of $q^*$, and since $\K_2(\pi_1 M^g) \isom \Z\oplus \Z$ and $\pi_2 \mathbf{ku} \isom \bbZ$, 
the proposition now follows from the elementary fact that in any diagram 
$$\xymatrix{
	\Z\oplus \Z \ar[r]_p & \Z \ar@/_.5pc/[l]_s
		}
$$
with $p\circ s = \mathrm{Id}$, the cokernel of $s$ is $\bbZ$ (briefly, if $s(1) = (s_1, s_2)$, then
$1 = p(s_1, s_2) = p(1, 0) s_1 + p(0,1) s_2$ so $s_1$ and $s_2$ are relatively prime, and it follows that $\mathrm{coker} (s)$ is torsion-free).
\end{proof}

In the non-orientable case, an analogous argument shows that the cokernel of the map $\K_0(\pi_1 \Sigma)\stackrel{\beta}{\to} \K_2(\pi_1 \Sigma)$ is either $0$ or $\bbZ/2$ (recall that both of these groups are isomorphic to $\bbZ\oplus \bbZ/2$), and hence by (\ref{pi_2}), $\Rdef_2 (\pi_1 \Sigma)$ is either $0$ or $\bbZ/2$. We expect that $\beta$ is in fact an isomorphism in degree zero, and hence that $\Rdef_2 (\pi_1 \Sigma)$ is zero.

For orientable surfaces $M^g$ ($g>0$), it would follow from Conjecture~\ref{main-conjecture} that the Bott map is an isomorphism above degree zero, and consequently $\pi_* \mcM( \pi_1 M^g) = 0$ for $*\geqs 3$.  Our computation of $\pi_i \mcM (\pi_1 M^g)$  ($i=0, 1, 2$), together with Corollary~\ref{EM}, would then imply that $\mcM (\pi_1 M^g)$ is homotopy equivalent to the infinite symmetric product $\mathrm{Sym}^{\infty} (M^g)$.
In the case $g=1$, this is a well-known fact, and follows easily from simultaneous diagonalizability of commuting matrices.  

In the non-orientable case the situation appears to be somewhat different.  There, the isomorphism with complex $K$--theory begins in dimension zero, and hence we expect that the Bott map is always an isomorphism.  Hence we expect that the homotopy groups of $\Hom(\pi_1 \Sigma, U)/U$ vanish above dimension 1, i.e. this space has the homotopy type of a product of circles.  The precise meaning of the homotopy groups $\pi_* \Hom(\Gamma, U)/U$ thus seems rather mysterious.
The reader should note the similarity between these calculations and the main result of Lawson's paper~\cite{Lawson-simul}, which states that $U^k/U$, the space of isomorphism classes of representations of a \emph{free} group, has the homotopy type of $\mathrm{Sym}^{\infty} (S^1)^k = \mathrm{Sym}^{\infty} B(F_k)$.   (Of course this space is homotopy equivalent to $(S^1)^k$.)

\section{Connected sum decompositions}$\label{excision}$

In this section we consider the behavior of deformation $K$--theory on connected sum decompositions of Riemann surfaces.  
Given an amalgamation diagram of groups, applying deformation $K$--theory results in a pull-back diagram of spectra.  An excision theorem states that the natural map
$$\Phi\co \K( G*_K H) \maps \mathrm{holim} \left( \K (G) \maps \K (K) \longleftarrow  \K (H) \right)$$
is an isomorphism, where holim denotes the homotopy pullback.  

Associated to a homotopy cartesian diagram of spaces
\begin{equation}\label{cart}
\xymatrix{
       {W} \ar[r]^{f} \ar[d]^{g}
       & {X} \ar[d]^{h} \\           
       {Y} \ar[r]^{k} 
       & {Z}  
                     }
\end{equation}
there is a long exact ``Mayer--Vietoris'' sequence of homotopy groups
\begin{equation}\label{LES}
 \ldots \maps \pi_k (W)     \stackrel{f_* \oplus g_*}{\maps} 
     \pi_k (X)\oplus \pi_k (Y)  \stackrel{h_* - k_*}{\maps} 
     \pi_k (Z) \stackrel{\partial}{\maps} 
     \pi_{k-1} (W)\maps \ldots
\end{equation}
(this follows from Hatcher~\cite[p. 159]{Hatcher}, together with the fact that the homotopy fibers of the vertical maps in a homotopy cartesian square are weakly equivalent).  If the diagram (\ref{cart}) is a diagram in the category of group-like $H$--spaces (i.e. $H$--spaces for which $\pi_0$ is a group), then all of the maps in the sequence (\ref{LES}) (including the boundary map) are homomorphisms in dimension zero.  Hence whenever deformation $K$--theory is excisive on an amalgamation diagram, one obtains a long-exact sequence in $\K_*$.  

Deformation $K$--theory can fail to satisfy excision in low dimensions, and in particular the failure of Theorem~\ref{main-thm} in degree zero leads to a failure of excision for connected sum decompositions of Riemann surfaces.  We briefly describe this situation.

Letting $M= M^{g_1+g_2}$ denote the surface of  genus $g_1 + g_2$ and $F_k$ the free group on $k$ generators, if we think of $M$ as a connected sum then the Van Kampen Theorem gives us an amalgamation diagram for $\pi_1 M$.  The long exact sequence coming from excision would end with
\begin{equation*}
\begin{split}
\K_1 (F_{2g_1}) \oplus \K_1 (F_{2g_2}) & \maps  \K_1 (\bbZ)  \maps  \K_0(\pi_1 M) \\
& \maps  \K_0(F_{2g_1}) \oplus \K_0(F_{2g_2})  \surjects  \K_0(\bbZ).
\end{split}
\end{equation*}
The groups in this sequence are known, and so the sequence would have the form 
$$\K_1 (F_{2g_1}) \oplus \K_1 (F_{2g_2}) \maps \K_1 (\bbZ) = \bbZ
         \maps \bbZ \maps \bbZ \oplus \bbZ \surjects \bbZ.
$$
We claim, however, that the maps $\K_1(F_{2g_i}) \to \K_1(\bbZ)$ are zero.  This leads immediately to a contradiction, meaning that no such exact sequence can exist and excision is not satisfied in degree zero.

If we write the generators of $F_{2g_i}$ as $a^i_1, b^i_1, \ldots, a^i_{g_i}, b^i_{g_i}$, then the map $c_i\co\bbZ \to F_{2g_i}$ is the multiple-commutator map, sending $1\in \Z$ to $\prod_{j=1}^{g_i} [a^i_j, b^i_j]$.  Since the representation spaces of $F_k$ are always connected,  $\Rep(F_k)$ is stably group-like with respect to $1\in \Hom(F_k, U(1))$.  Hence (using Theorem~\ref{gp-completion-cor}) one finds that the induced map $c_i^*\co\K_*(F_{2g_i}) \to \K_* (\Z)$ may be identified with the map
$$\pi_* (\Z\cross (U^{2g_i})_{hU}) \to \pi_* (\Z\cross U_{hU})$$
induced by the multiple commutator map $C\co U^{2g_i} \to U$ (here the actions of $U$ are via conjugation).  The induced map $C_*$ on homotopy is always zero, and from the diagram of fibrations
$$\xymatrix{
	{U^{2g_i}} \ar[r] \ar[d]^C 
	& {EU\cross_{U} U^{2g_i}} \ar[r] \ar[d] 
	& {BU} \ar[d]^(.45){\begin{turn}{270} $=$ \end{turn} } \\
	{U} \ar[r] 
	& {EU\cross_U U} \ar[r] 
	& {BU}
		  }
$$
one now concludes (using Bott Periodicity) that $c_i^*$ is zero for $*$ odd. 
This shows that deformation $K$--theory is not excisive on $\pi_0$ for connected sum decompositions.  However, based on Theorem~\ref{main-thm} we expect that excision will hold in all higher degrees.


\section{Appendix: Holonomy of flat connections}$\label{holonomy}$

We now discuss the holonomy representation associated to a flat connection on a principal $G$--bundle ($G$ a Lie group) over a smooth, connected manifold $M$.  We show that holonomy induces a bijection from the set of all such (smooth) connections to $\Hom(\pi_1 M, G)$, after taking the action of the based gauge group into account (Proposition~\ref{hol}).  This is well-known, but there does not appear to be a complete reference.  Some of the results to follow appear in Morita's books~\cite{Morita-gdf, Morita-gcc}, and a close relative of the main result is stated in the introduction to Fine--Kirk--Klassen~\cite{FKK}.  

Most proofs are left to the reader; these are generally tedious but straightforward unwindings of the definitions.  Usually a good picture contains the necessary ideas.  Many choices must be made in the subsequent discussion, starting with a choice of left versus right principal bundles.  It is quite easy to make incompatible choices, especially because these may cancel out later in the argument.  We have carefully made consistent and correct choices.

Our principal bundles will always be equipped with a
$\emph{right}$ action of the structure group $G$.
A connection on $P$ is a $G$--equivariant splitting of the natural map $TP\to \pi^*TM$.  The gauge group $\G(P)$ is the group of all equivariant maps $P\stackrel{\phi}{\maps} P$ such that $\pi \circ \phi = \pi$; the gauge group acts on the left of $\A(P)$ via pushforward: $\phi_* A = D\phi \circ A \circ \tilde{\phi}^{-1}$.

Given a smooth curve $\gamma\co [0,1] \to M$ we may define a parallel transport operator 
$T_{\gamma} \co P_{\gamma (0)} \to P_{\gamma(1)}$ by following $A$--horizontal lifts of the path $\gamma$.  An $A$--horizontal lift of $\gamma$ is a curve $\tilde{\gamma}\co [0,1] \to P$ satisfying
$$\pi \circ \tilde{\gamma} = \gamma \quad \text{and} \quad \tilde{\gamma}_p'(t)= A\left(\gamma'(t), \tilde{\gamma}_p(t)\right),$$
and is uniquely determined by its starting point $\tilde{\gamma} (0)$; we denote the lift starting at $p\in P_{\gamma(0)}$ by $\tilde{\gamma}_p$.
Parallel transport is now defined by $T_\gamma(p) = \tilde{\gamma}_p(1).$

Parallel transport is $G$--equivariant and behaves appropriately with respect to composition and reversal of paths.
Any \emph{flat} connection $A$ is locally trivial (see Donaldson--Kronheimer~\cite[Theorem 2.2.1]{DK}), and a standard compactness argument shows that parallel transport is homotopy invariant for such connections.  

\begin{definition} Let $P$ be a principal $G$--bundle over $M$, and choose basepoints $m_0 \in M$, $p_0 \in P_{m_0}$.  Associated to any flat connection $A$ on $P$, the \emph{holonomy representation} $$\rho_A\co \pi_1 (M, m_0) \to G$$ is defined by setting $\rho_A([\gamma])$ to be the unique element of $G$ satisfying $p_0 = T_\gamma^A(p_0)\cdot\rho_A([\gamma])$.
Here $\gamma\co I \to M$ is a smooth loop based at $m_0$ and $[\gamma]$ is its class in $\pi_1 (M, m_0)$.
\end{definition}

We now assume that $M$ is equipped with a basepoint $m_0 \in M$, and we equip all principal bundles $P$ with basepoints $p_0 \in P_{m_0}$.  
We denote the set of all (smooth) flat connections on a principal bundle $P$ by $\fc(P)$. 

\begin{proposition}\label{gauge}
For any $A \in \fc(P)$ and any $\phi \in \G(P)$ we have
$$\rho_{\phi_\ast A} = \phi_{m_0}\rho_A\phi_{m_0}^{-1},$$ where $\phi_{m_0} \in G$ is the unique element such that $p_0 \cdot \phi_{m_0} = \phi(p_0).$  (Note that $\phi \mapsto \phi_{m_0}$ is a homomorphism $\G(P) \to G$.)
\end{proposition}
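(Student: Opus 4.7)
The plan is to unwind the definition of the holonomy representation by tracking what happens to horizontal lifts under $\phi$, using two basic compatibilities between gauge transformations and horizontal distributions. Since $\phi$ is fibre-preserving, $\phi\circ\widetilde\gamma$ still projects to $\gamma$; and since the horizontal distribution of $\phi_*A$ is by definition $D\phi$ applied to that of $A$, the curve $\phi\circ\widetilde\gamma$ is $\phi_*A$-horizontal whenever $\widetilde\gamma$ is $A$-horizontal. Together with $G$-equivariance (which makes horizontality invariant under right multiplication by elements of $G$), these observations suffice to move between the horizontal lifts for $A$ and for $\phi_*A$.

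First I would fix a loop $\gamma$ based at $m_0$ and let $\widetilde\gamma$ be the $A$-horizontal lift starting at $p_0$, so that by definition $\widetilde\gamma(1)\cdot\rho_A([\gamma]) = p_0$. Applying $\phi$ and using equivariance, $\phi\circ\widetilde\gamma$ is the $\phi_*A$-horizontal lift of $\gamma$ starting at $\phi(p_0) = p_0\cdot\phi_{m_0}$, and its endpoint equals $\phi(\widetilde\gamma(1)) = p_0\cdot\phi_{m_0}\cdot\rho_A([\gamma])^{-1}$. To produce the $\phi_*A$-horizontal lift starting at $p_0$, I would act on the right by $\phi_{m_0}^{-1}$; since right $G$-translation preserves horizontality, the resulting curve $\widetilde\gamma'(t) := \phi(\widetilde\gamma(t))\cdot\phi_{m_0}^{-1}$ is the $\phi_*A$-horizontal lift with $\widetilde\gamma'(0) = p_0$ and
\[
\widetilde\gamma'(1) \;=\; p_0\cdot\phi_{m_0}\,\rho_A([\gamma])^{-1}\,\phi_{m_0}^{-1}.
\]

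Finally I would substitute this into the defining relation $\widetilde\gamma'(1)\cdot\rho_{\phi_*A}([\gamma]) = p_0$ and use freeness of the right $G$-action on $P$ to cancel $p_0$. This forces $\phi_{m_0}\,\rho_A([\gamma])^{-1}\,\phi_{m_0}^{-1}\cdot\rho_{\phi_*A}([\gamma]) = 1$, giving the asserted conjugation formula. The parenthetical homomorphism claim $\phi\mapsto\phi_{m_0}$ is immediate from the same style of computation: $(\phi\circ\psi)(p_0) = \phi(p_0\cdot\psi_{m_0}) = \phi(p_0)\cdot\psi_{m_0} = p_0\cdot(\phi_{m_0}\psi_{m_0})$.

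There is no serious mathematical obstacle here; the only real challenge is bookkeeping. One must consistently track three sign-like choices: that the $G$-action is on the right, that $\phi_*A$ is defined by pushforward along $\phi$ (so horizontal lifts transform by $\phi\circ(-)$, not $\phi^{-1}\circ(-)$), and that the holonomy convention used in this paper is the slightly unusual one in which $\rho_A([\gamma])$ brings the endpoint of the horizontal lift \emph{back} to $p_0$ rather than sending $p_0$ forward to the endpoint. Getting any of these conventions wrong flips the formula to $\phi_{m_0}^{-1}\rho_A\phi_{m_0}$, which is why it is essential to keep all three straight throughout.
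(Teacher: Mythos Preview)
Your proof is correct and is exactly the ``tedious but straightforward unwinding of the definitions'' that the paper explicitly leaves to the reader; the paper does not supply its own proof of this proposition. Your bookkeeping of the three conventions (right action, pushforward connection, and the paper's holonomy convention $\widetilde\gamma(1)\cdot\rho_A([\gamma])=p_0$) is accurate, and the resulting computation matches the stated formula.
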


Proposition~\ref{gauge} shows that we have a diagram
$$\xymatrix{
	\coprod \limits_{[P]} \fc (P) \ar[dr] \ar[rr]^{\mcH} & & \Hom(\pi_1 (M, m_0), G) \\
	& \coprod \limits_{[P]} \fc (P)/\bG (P) \ar[ur]^{\overline{\mcH}},
		}
$$
where $\mathcal{H}(A) = \rho_A$.  The disjoint unions range over some chosen set of representatives for the \emph{unbased} isomorphism classes of (based) principal $G$--bundles.  (In other words, we choose a set of representatives for the unbased isomorphism classes, and then choose, arbitrarily, a basepoint in each representative, at which we compute holonomy.)
We now explain the equivariance properties of this diagram.  When $G$ is connected, Lemma~\ref{restriction} shows that $G$ acts on the space
$\coprod_{[P]} \fc(P)/\bG(P)$.
The action of $g \in G$ on an equivalence class $[A] \in \fc(P) / \bG(P)$ is given by
$g\cdot[A] = [(\phi^g)_\ast A]$,
where $\phi^g \in \G(P)$ is any gauge transformation satisfying $(\phi^g)_{m_0} = g$.  
We can now state the main result of this appendix.

\begin{proposition}\label{hol} The holonomy map defines a (continuous) bijection
$$\overline{\mathcal{H}}\co \coprod_{[P]} \fc(P)/\bG(P) \to \Hom(\pi_1M, G).$$ 
If $G$ is connected, this map is $G$--equivariant with respect to the above $G$--action.
\end{proposition}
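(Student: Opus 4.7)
The plan is to separate the claim into surjectivity, injectivity, and equivariance, using the standard bundle-reconstruction correspondence (representation $\Leftrightarrow$ flat bundle). Continuity is already established in Lemma~\ref{holonomy-cont}.

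For surjectivity, given $\rho \in \Hom(\pi_1 M, G)$ I would form the flat bundle $P_\rho := \wt{M} \times_\rho G$, where $\pi_1 M$ acts on $\wt{M}$ by deck transformations and on $G$ by $\gamma \cdot g = \rho(\gamma)^{-1} g$. This bundle carries a tautological flat connection $A_\rho$ whose horizontal distribution at $[\wt{m}, g]$ is the image of $T_{\wt m}\wt{M} \times \{0\}$ under the quotient. Choosing the basepoint $[\wt m_0, e]$, parallel transport along a loop $\gamma$ sends $[\wt m_0, e]$ to $[\wt m_0 \cdot \gamma, e] = [\wt m_0, \rho(\gamma)]$, so directly from the defining equation $p_0 = T^{A_\rho}_\gamma(p_0) \cdot \rho_{A_\rho}(\gamma)$ one reads off $\rho_{A_\rho} = \rho$. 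Since $P_\rho$ must be isomorphic (as an unbased bundle) to exactly one representative $P$ in the coproduct, this produces a point in $\fc(P)/\bG(P)$ mapping to $\rho$ under $\overline{\mcH}$.

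For injectivity, I would build the inverse construction. For any flat connection $A$ on a bundle $P$, define $\Phi_A \co \wt M \times G \to P$ by $\Phi_A(\wt m, g) = T^A_\gamma(p_0) \cdot g$, where $\gamma$ is the projection to $M$ of any path in $\wt M$ from $\wt m_0$ to $\wt m$. Flatness of $A$ (hence homotopy invariance of parallel transport) makes this independent of the path, and the holonomy identity shows $\Phi_A$ descends to a \emph{based} bundle isomorphism $\wt\Phi_A \co P_{\rho_A} \stackrel{\iso}{\to} P$ that pulls back $A$ to the tautological connection $A_{\rho_A}$. Smoothness of $\wt\Phi_A$ follows from smooth dependence of solutions of the horizontal-lift ODE on initial data. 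Now if $A \in \fc(P)$ and $A' \in \fc(P')$ (with $P, P'$ the chosen representatives) satisfy $\rho_A = \rho_{A'}$, then $P$ and $P'$ are both unbased-isomorphic to $P_{\rho_A}$, hence $P = P'$ by our choice of representatives, and $\wt\Phi_{A'} \circ \wt\Phi_A^{-1} \in \bG(P)$ carries $A$ to $A'$. Thus $[A] = [A']$.

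For $G$--equivariance (assuming $G$ connected so that $\bG(P)$ makes sense via Lemma~\ref{restriction}), the content is already contained in Proposition~\ref{gauge}: if $\phi^g \in \G(P)$ satisfies $(\phi^g)_{m_0} = g$, then $\rho_{(\phi^g)_* A} = g\, \rho_A\, g^{-1}$, so $\overline{\mcH}$ intertwines the gauge-induced $G$--action on $\coprod_{[P]}\fc(P)/\bG(P)$ with the conjugation action on $\Hom(\pi_1 M, G)$. The main technical obstacle is confirming that the inverse map $A \mapsto \wt\Phi_A$ really produces a smooth bundle isomorphism and that two choices of such $\wt\Phi_A$, differing by a path from $\wt m_0$ to itself, give the same map precisely because of the holonomy condition; both points reduce to careful bookkeeping with lifts and the equivariance identity $T^A_\gamma(p \cdot g) = T^A_\gamma(p) \cdot g$, together with smooth dependence for the horizontal-lift ODE on compact intervals.
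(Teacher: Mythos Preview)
Your proposal is correct and follows essentially the same route as the paper: surjectivity via the mixed bundle $\wt M\times_\rho G$ with its tautological flat connection, equivariance directly from Proposition~\ref{gauge}, and injectivity via a parallel-transport isomorphism between bundles carrying connections with the same holonomy. The only presentational difference is that for injectivity you factor through the mixed bundle (building $\wt\Phi_A\co P_{\rho_A}\to P$ for each $A$ and then composing), whereas the paper constructs the isomorphism $P\to Q$ directly by $\phi = T_\gamma^{A_Q}\circ\phi\circ T_{\overline\gamma}^{A_P}$; these are the same argument. Two small remarks: the based gauge group $\bG(P)$ is defined regardless of whether $G$ is connected---connectedness is only needed for the $G$--action via Lemma~\ref{restriction}; and Lemma~\ref{holonomy-cont} treats the Sobolev setting, whereas here one works with the $C^\infty$--topology, in which continuity is immediate from smooth dependence of integral curves.
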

\begin{proof}  We begin by noting that equivariance is immediate from Proposition \ref{gauge}, and continuity of the holonomy map is immediate from its definition in terms of integral curves of vector fields (here we are thinking of the $C^\infty$--topology on $\fc(P)$).
In order to prove bijectivity of $\overline{\mathcal{H}}$, we will need to introduce the mixed bundles associated to representations $\rho\co\pi_1 M \to G$.  This will provide a proof of surjectivity.  Injectivity requires the idea that maps between bundles with the same holonomy can be described in terms of parallel transport.

Let $\rho\co\pi_1 M \to G$ be a representation.  We define the \emph{mixed bundle} $E_\rho$ by
$$E_\rho  = \wt{M}\cross_\rho G = \left(\wt{M}\cross G \right)\Big/(x,g)\sim (x\cdot \gamma, \rho(\gamma)^{-1}g)$$
Here $\wt{M}\stackrel{\pi_{\wt{M}}}{\maps}M$ is the universal cover of $M$, equipped with a basepoint $\wt{m}_0 \in \wt{M}$ lying over $m_0 \in M$.  
It is easy to check that $E_\rho$ is a principal $G$--bundle on $M$, with projection 
$[(\wt{m}, g)] \mapsto \pi_{\wt{M}}(\wt{m})$. We denote this map by $\pi_\rho\co E_{\rho} \to M$.  Note that since we have chosen basepoints $m_0\in M$ and $\wt{m}_0 \in \wt{M}$, $E_\rho$ acquires a canonical basepoint $[(\wt{m}_0, e)]\in E_\rho$ making $E_\rho$ a based principal $G$--bundle ($e \in G$ denotes the identity.)

The trivial bundle $\widetilde{M} \cross G$ has a natural horizontal connection, which descends to 
a canonical flat connection $\bbA_\rho$ on the bundle $E_{\rho}$.  This connection is given by 
$$\bbA_\rho\left([\tilde{x},g], \vect{v}_x\right)= Dq\left(\left(D_{\tilde{x}}\pi_{\wt{M}}\right)^{-1}(\vect{v}_x), \vect{0}_g\right).$$
On the left, $x \in M$, $\vect{v}_x \in T_xM$, $\tilde{x} \in \pi_{\wt{M}}^{-1}(x)\subset \wt{M}$, and $g \in G$.  On the right, $\vect{0}_g \in T_g G$ denotes the zero vector, $q$ denotes the quotient map $\wt{M}\cross G \to \wt{M} \cross_{\rho} G = E_\rho$, and $D_{\tilde{x}} \pi_{\wt{M}}$ is invertible because $\pi_{\wt{M}}\co \wt{M}\to M$ is a covering map.
We leave it to the reader to check that the connection $\bbA_\rho$ is flat, with holonomy representation 
$\mathcal{H}(\bbA_\rho) = \rho$.
This proves surjectivity of the holonomy map.  Injectivity will follow from:

\begin{proposition}\label{injectivity} Let $(P, p_0)$ and $(Q, q_0)$ be based principal $G$--bundles over $M$ with flat connections $A_P$ and $A_Q$, respectively.  If $\mathcal{H}(A_P) = \mathcal{H}(A_Q)$, then there is a based isomorphism $\phi\co P \to Q$ such that $\phi_\ast A_P = A_Q$.  
\end{proposition}

The proof will in fact show that the assumption of flatness in Proposition~\ref{injectivity} is unnecessary.  For non-flat connections, however, the holonomy along a loop $\gamma$ no longer depends only on the homotopy class of $\gamma$, so the condition $\mathcal{H} (A_P) = \mathcal{H} (A_Q)$ should be interpreted as saying that for every smooth loop $\gamma$ in $M$, the holonomies of $A_P$ and $A_Q$ around $\gamma$ coincide. 

The map $\phi$ is defined by setting $\phi(p_0 \cdot g) = q_0 \cdot g$ and then extending via parallel transport:
$$\phi(p) = T_\gamma^{A_Q} \circ \phi \circ T_{\overline{\gamma}}^{A_P},$$ where $\gamma\co [0,1] \to M$ is any path with $\gamma(0) = m_0$ and $\gamma(1) = \pi(p)$.  Using the fact that $\mathcal{H}(A_P) = \mathcal{H}(A_Q)$, one may check that $\phi$ is well-defined.  

To prove that $\phi_* A_P = A_Q$, we consider the lifts of a particular vector $\vect{v}\in T_m M$ under these two connections.  Let $\gamma \co [0,1]\to M$ be a smooth path with $\gamma(0) = m_0$ and $\gamma' (1/2) = \vect{v}$.  By definition of $\phi$ we have $\phi (\tilde{\gamma}_{p_0}) = \tilde{\gamma}_{q_0}$ and hence 
$D\phi \left(\tilde{\gamma}^\prime_{p_0} (t)\right) = \tilde{\gamma}^\prime_{q_0} (t)$ for any $t\in [0,1]$.  We now  have
\begin{eqnarray*}
(\phi_* A_P) (\tilde{\gamma}_Q (1/2), \vect{v}) 
& = & D\phi \left(A_P (\phi^{-1} \tilde{\gamma}_Q (1/2), \gamma' (1/2))\right) \hspace{.7in} \\
& = &D\phi \left(A_P (\tilde{\gamma}_P (1/2), \gamma' (1/2))\right) 
= D\phi \left(\tilde{\gamma}^\prime_P (1/2)\right)
 = \tilde{\gamma}^\prime_Q (1/2) \\
& = & A_Q (\tilde{\gamma}_Q (1/2), \gamma' (1/2))
 = A_Q (\tilde{\gamma}_Q (1/2), \vect{v})
\end{eqnarray*} 
and by $G$--equivariance it follows that $(\phi_* A_P) (q, \vect{v}) = A_Q (q, \vect{v})$ for every $q\in Q_m$.

This completes the proof of Proposition~\ref{hol}.
\end{proof}

As an easy consequence of this result, one obtains the more well-known bijection between (unbased) isomorphism classes of flat connections and conjugacy classes of homomorphisms.
A proof of the latter result is given by Morita~\cite[Theorem 2.9]{Morita-gcc}; however, Morita does not prove an analogue of Proposition~\ref{injectivity} and consequently his argument does not make the injectivity portions of these results clear.

\end{document}